\theoremstyle{plain}
\newtheorem{thm}{Theorem}[section]
\newtheorem{defn}[thm]{Definition}
\newtheorem{prop}[thm]{Proposition}
\newtheorem{lem}[thm]{Lemma}
\theoremstyle{remark}
\newtheorem{rem}{Remark}
\numberwithin{equation}{section}
\DeclareMathOperator{\hdim}{\dim_H}
\DeclareMathOperator{\var}{Var}
\DeclareMathOperator{\hdimu}{\overline{\dim}_H}
\DeclareMathOperator{\hdiml}{\underline{\dim}_H}
\newcommand{\essinf}{\operatorname*{ess\,inf}}
\newcommand{\esssup}{\operatorname*{ess\,sup}}
\newcommand{\dif}{ \, \mathrm d}
\newcommand{\dist}{\mathrm{dist}}
\newcommand{\Q}{\mathbb Q}
\newcommand{\N}{\mathbb N}
\newcommand{\R}{\mathbb R}
\newcommand{\lm}{\mathcal L}
\renewcommand{\hm}{\mathcal H}
\newcommand{\hc}{\mathcal H_\infty}
\newcommand{\cb}{\mathcal B}
\newcommand{\cf}{\mathcal F}
\newcommand{\scg}{\mathscr G}
\newcommand{\qaq}{\mathrm{\quad and\quad}}
\newcommand{\be}{\bm\epsilon}
\begin{document}
		\title[A dimensional mass transference principle]{A dimensional mass transference principle from balls to open sets and applications to dynamical Diophantine approximation}
	\author{Yubin He}

	\address{Department of Mathematics, Shantou University, Shantou, Guangdong, 515063, China}

	\email{ybhe@stu.edu.cn}

%
%

	\subjclass[2020]{11K50, 11K55, 37D35}

	\keywords{mass transference principle, dynamical Diophantine approximation, Hausdorff dimension, Gibbs measure}
	\begin{abstract}
The mass transference principle of Beresnevich and Velani is a powerful mechanism for determining the Hausdorff dimension/measure of $\limsup$ sets that arise naturally in classical Diophantine approximation. However, in the setting of dynamical Diophantine approximation, this principle often fails to apply effectively, as the radii of the balls defining the dynamical $\limsup$ sets generally depend on the orbit of the point $x$ itself.

In this paper, we develop a dimensional mass transference principle that enables us to recover and extend classical results on shrinking target problems, particularly for the $\beta$-transformations and the Gauss map. Moreover, our result shows that the corresponding $\limsup$ sets have large intersection properties. A potentially interesting feature of our method is that, in many cases, shrinking target problems are closely related to finding an appropriate Gibbs measure, which may reveal new aspects of the link between thermodynamic formalism and dynamical Diophantine approximation.
	\end{abstract}
	\maketitle
\section{Introduction}\label{s:intro}
The central question in Diophantine approximation is: how well can a given
real number $x\in[0,1)$ be approximated by rational numbers. Dating back to Dirichlet, a consequence of his famous theorem is that for any $x\in[0,1)$,
\begin{equation}\label{eq:Dirichlet}
	\bigg|x-\frac{p}{q}\bigg|<\frac{1}{q^2}\quad\text{for i.m. $\frac{p}{q}\in\Q$},
\end{equation}
where i.m. stands for {\em infinitely many}.
The estimate above provides an approximation rate valid for all $x$ and lays the foundation for the metric theory of Diophantine approximation. This theory seeks to understand the sets of $x$ for which inequalities analogous to \eqref{eq:Dirichlet} hold, but with the right-hand-side replaced by functions of $q$ that decay more rapidly. For any $\tau\ge 2$, define
\[W(\tau):=\bigg\{x\in[0,1):\bigg|x-\frac{p}{q}\bigg|<\frac{1}{q^\tau}\text{ for i.m. $\frac{p}{q}\in\Q$}\bigg\}.\]
A classical result, proved independently by Besicovitch \cite{BesicovitchJLMSDiophantine} and Jarn\'ik \cite{Jarnikdimension}, shows that for any $\tau\ge2$,
\begin{equation}\label{eq:BesicovitchJarnik}
	\hdim W(\tau)=2/\tau,
\end{equation}
where $\hdim$ denotes the Hausdorff dimension. Remarkably, a profound connection between the statements described in \eqref{eq:Dirichlet} and \eqref{eq:BesicovitchJarnik} was uncovered by Beresnevich and Velani \cite{BeresnevichVelaniMTPann} through their celebrated mass transference principle, a powerful tool for deriving lower bounds on the Hausdorff dimension of a broad class of $\limsup$ sets. More specifically, Dirichlet theorem alone suffices to deduce the Besicovitch--Jarn\'ik theorem via their principle. We begin with some notation before stating their principle.

Throughout, the symbols
$\ll$ and $\gg$ will be used to indicate an inequality with an unspecified positive multiplicative constant. If $a\ll b$ and $a\gg b$, we write $a\asymp b$ and say that the quantities $a$ and $b$ are comparable.
Let $|E|$ denote the diameter of a set $E$. Let $X$ be a compact metric space equipped with a non-atomic
probability measure $\mu$. Suppose there exists a constant $\delta>0$ such that for any $x\in X$ and $0<r<|X|$
\[\mu(B(x,r))\asymp r^\delta,\]
where the implied constant does not depend on $x$ and $r$.
Such a measure is said to be {\em $\delta$-Ahlfors regular}. If $X$ supports a $\delta$-Ahlfors regular measure $\mu$, then we have that $\hdim X=\delta$ and $\mu$ is comparable to the $\delta$-dimensional Hausdorff measure (denoted by $\hm^\delta$) restricted to $X$ --- see \cite{Falconer_book} for the details.

The following statement is a simplified and slightly reformulated version of the result in \cite{BeresnevichVelaniMTPann}, adapted for our purposes.
\begin{thm}[Mass transference principle {\cite[Theorem 3]{BeresnevichVelaniMTPann}}]\label{t:MTPBV}
	 Let $X$ be a compact metric space equipped with a $\delta$-Ahlfors regular measure $\mu$. Let $\{B(x_n,r_n)\}$ be a sequence of balls in $X$ with $r_n\to 0$ as $n\to\infty$. Suppose that
	\[\mu\Big(\limsup_{n\to\infty}B(x_n,r_n)\Big)=1.\]
	Then, for any $\tau>1$,
	\[\hdim \Big(\limsup_{n\to\infty} B(x_n,r_n^\tau)\Big)\ge \frac{\delta}{\tau}.\]
\end{thm}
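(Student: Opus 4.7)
The plan is to prove this via the mass distribution principle combined with a Cantor-like construction inside the shrunk $\limsup$ set. Specifically, for any $s' < \delta/\tau$, I would construct a closed subset $K \subseteq \limsup_{n\to\infty} B(x_n, r_n^\tau)$ together with a probability measure $\nu$ supported on $K$ satisfying the Frostman-type estimate $\nu(B(x,r)) \ll r^{s'}$ for all balls $B(x,r) \subseteq X$. The mass distribution principle then yields $\hdim(\limsup_{n\to\infty} B(x_n, r_n^\tau)) \geq \hdim K \geq s'$, and letting $s' \nearrow \delta/\tau$ finishes the proof.

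The construction rests on a Vitali-type consequence of the full-measure hypothesis: for any ball $B \subseteq X$ and any threshold $\rho > 0$, one can extract a finite pairwise disjoint subfamily of $\{B(x_n,r_n) : r_n < \rho,\ B(x_n,r_n) \subseteq B\}$ whose total $\mu$-measure is at least $c\,\mu(B)$ for some absolute constant $c > 0$. I would then build the Cantor set level by level. Starting with a single root ball $B^{(0)}$, suppose level $k$ consists of pairwise disjoint shrunk balls $B_j^{(k)} = B(x_{n_j}, r_{n_j}^\tau)$, each paired with its blown-up parent $B(x_{n_j}, r_{n_j}) \supseteq B_j^{(k)}$. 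At stage $k+1$, I apply the extraction inside each blown-up parent with $\rho$ chosen small enough (depending on the data from previous stages) to produce disjoint balls $B(x_m, r_m)$; the corresponding shrunk balls $B(x_m, r_m^\tau)$ then sit comfortably inside $B_j^{(k)}$ because $\tau > 1$, and they constitute level $k+1$.

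The heart of the argument is the mass assignment. To each level-$(k+1)$ ball $B(x_m, r_m^\tau)$ I would assign a mass proportional to $\mu(B(x_m, r_m)) \asymp r_m^\delta = (r_m^\tau)^{\delta/\tau}$, normalized so that the total mass inside each level-$k$ parent matches the parent's previously assigned mass. Heuristically this makes $\nu$ behave like a Frostman measure of exponent $\delta/\tau$ at the scales naturally occurring in the construction.

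Verifying the Frostman estimate $\nu(B(x,r)) \ll r^{s'}$ for an arbitrary ball is the main technical obstacle and is where the bulk of the work lies. It proceeds by a case analysis based on how $r$ compares with the scales $r_{n_j}^\tau$ appearing in the various levels: when $r$ is larger than the current scale, one bounds the number of level balls intersecting $B(x,r)$ via Ahlfors regularity applied to the disjoint blown-up parents, whereas when $r$ is smaller one uses the recursive mass assignment together with Ahlfors regularity at the single enclosing scale. The slight loss from the natural exponent $\delta/\tau$ down to $s'$ is what absorbs the multiplicative constants that accumulate through the recursion.
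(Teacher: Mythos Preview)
The paper does not give its own proof of this theorem; it is quoted verbatim from Beresnevich--Velani as background, so there is nothing in the paper to compare your argument against. Your outline follows the spirit of the original Beresnevich--Velani construction, but the nesting step as you have written it is wrong.

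You say that at stage $k+1$ you apply the extraction lemma \emph{inside the blown-up parent} $B(x_{n_j},r_{n_j})$ to obtain disjoint balls $B(x_m,r_m)\subset B(x_{n_j},r_{n_j})$, and then claim that the shrunk balls $B(x_m,r_m^\tau)$ ``sit comfortably inside $B_j^{(k)}=B(x_{n_j},r_{n_j}^\tau)$ because $\tau>1$.'' This does not follow: the condition $\tau>1$ shrinks the \emph{radius} of $B(x_m,r_m)$ but does nothing to its \emph{center}. The point $x_m$ can lie anywhere in $B(x_{n_j},r_{n_j})$, in particular at distance nearly $r_{n_j}$ from $x_{n_j}$, so $B(x_m,r_m^\tau)$ need not intersect the much smaller ball $B(x_{n_j},r_{n_j}^\tau)$ at all. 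With your construction the levels fail to be nested, the intersection $K$ may be empty, and in any case $K$ is not contained in $\limsup_n B(x_n,r_n^\tau)$.

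The repair is to run the extraction inside the \emph{shrunk} ball $B_j^{(k)}=B(x_{n_j},r_{n_j}^\tau)$ itself (the covering lemma you cite applies in any ball). This yields $B(x_m,r_m)\subset B_j^{(k)}$ and hence $B(x_m,r_m^\tau)\subset B_j^{(k)}$, restoring the nesting. After this correction, the mass assignment and the Frostman case analysis you describe are essentially those of the original proof.
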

 The mass transference principle in this form concerns $\limsup$ sets defined by balls, which is sufficient for many classical applications. However, many naturally occurring $\limsup$ sets in Diophantine approximation are defined in terms of rectangles, neighborhoods of resonant sets, or more general open sets. To address such cases, various extensions of the mass transference principle have been developed, allowing for $\limsup$ sets defined by a wider range of shapes. We refer the reader to \cite{AllenBeresnevichhyperplaneCompos,AllenBakerMTPselect,KoivusaloRamsMTPIMRN,ZhongMTPJMAA,WangWuMTPrectangleMA,LiLiaoVelaniWangZorinMTPadv,HeMTPadv,DaviaudMTPJMAA,DaviaudMTPadv,ErikssonMTPadv,PerssonMTPLIPreal} for further details.

Classical Diophantine approximation concerns the distribution of rational approximations to real numbers. In recent years, this classical viewpoint has been naturally extended to the setting of dynamical Diophantine approximation, which studies approximation properties along orbits of dynamical systems. Among various problems in this field, our primary focus is on the shrinking target problem, first introduced by Hill and Velani \cite{HillVelaniSTPintroduce}, along with its generalizations, which concern whether the orbit of a given point hits a sequence of shrinking targets infinitely often.

Let $(X,d,T)$ be a dynamical system. The shrinking target problem studies the size, expressed in terms of dimension and measure, of the shrinking target set
\[\{x\in X:d(T^nx,x_0)<\psi(n,x)\text{ for i.m. $ n $}\},\]
where $x_0\in X$ and $\psi:\N\times X\to\R_{\ge 0}$ is a positive function. Numerous results on the measure and dimension of shrinking target sets have been established in various dynamical systems; see, for example, \cite{AllenBaranySTPMathematika,BakerKoivusaloSTPadv,BaranyRamsSTPPLMS,CoonsHussainWangSTPbeta,HeSTPETDS,HussainLiSimmonsWangBCETDS,KleinbockZhengBCNonlinearity,LiLiaoVelaniZorinSTPadv,Philipp1967,TanWangRecurbetaadv,WangSTPJMAA}.
To illustrate, consider the \emph{doubling map} $T_2(x) = 2x \ (\textrm{mod}\ 1)$ on $[0,1)$. In this setting, we are interested in the shrinking target set
\[
W(T_2,f,x_0) := \left\{ x \in [0,1): |T_2^n x -x_0|< e^{-S_n f(x)} \ \text{for i.m. } n \right\},
\]
where $x_0\in [0,1]$, $f: [0,1]\to \mathbb{R}_+$ is a H\"older continuous function and
\[
S_n f(x) = \sum_{k=0}^{n-1} f(T_2^k x)
\]
is the Birkhoff sum of $f$ along the orbit of $x$. It is already known (see e.g. \cite{WangWusurvey}) that
\[\hdim W(T_2,f,x_0)=s,\]
with $s$ satisfying
\begin{equation}\label{eq:P(-s(f+log 2),T2)=0}
	P(-s(f+\log 2), T_2):=\lim_{n\to\infty}\frac{1}{n}\log\sum_{i=0}^{2^n-1}\big(2^{-n}e^{-S_nf(y_{n,i})}\big)^s=0,
\end{equation}
where $y_{n,i}\in[i2^{-n},(i+1)2^{-n}]$ is chosen arbitrarily, and $P(\cdot,T_2)$ denotes the pressure function with respect to $T_2$ (see Section \ref{ss:presure function beta} for further details).

A natural question is whether the mass transference principle stated in Theorem \ref{t:MTPBV} can be applied to obtain the Hausdorff dimension of $W(T_2,f,x_0)$. To explore this, note that $f$ is H\"older continuous, and so $W(T_2,f,x_0)$ can be roughly expressed as
\[\bigcap_{N=1}^\infty\bigcup_{n=N}^\infty\bigcup_{i=0}^{2^n-1}B(x_{n,i},2^{-n}e^{-S_nf(x_{n,i})})=:\bigcap_{N=1}^\infty\bigcup_{n=N}^\infty E_n,\]
where $x_{n,i}\in[i2^{-n},(i+1)2^{-n}]$ satisfies $T_2^nx_{n,i}=x_0$.
If $f$ is not constant, then for a fixed $n \in \N$, the terms $e^{-S_n f(x_{n,i})}$ are not identical as $i$ varies. Therefore, $E_n$ consists of intervals of varying lengths, some of which may be very small, while others may be quite large (see Figure \ref{fig:E_n} for an illustration and Figure \ref{fig:E_np} for a comparison). Although the initial intervals are well-separated, enlarging them alters the situation, which constitutes the most significant difference compared to $W(\tau)$.
\begin{figure}[H]
	\centering
	\includegraphics{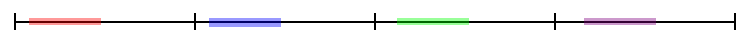}
	\caption{When $f$ is constant, $E_2$ consists of intervals of equal length.}
	\label{fig:E_np}
\end{figure}
\begin{figure}[H]
	\centering
	\includegraphics{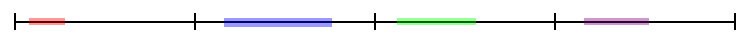}
		\caption{When $f$ is not constant, $E_2$ consists of intervals of varying lengths.}
		\label{fig:E_n}
\end{figure}
\noindent To be able to apply Theorem \ref{t:MTPBV}, it is necessary to enlarge the intervals in $E_n$'s to obtain a larger $\limsup$ set with full one-dimensional Lebesgue measure (denoted by $\lm$) restricted to $[0,1]$. It follows from \eqref{eq:P(-s(f+log 2),T2)=0} that for any $\epsilon>0$ and any sufficiently large $n$ (depending on $\epsilon$),
\[\sum_{i=0}^{2^n-1} \left( 2^{-n} e^{-S_n f(x_{n,i})} \right)^s = O \left( e^{n\epsilon} \right),\]
which means that the total length of the intervals $B\big(x_{n,i},\big(2^{-n}e^{-S_nf(x_{n,i})}\big)^s\big)$ in
\[E_{n,s}:=\bigcup_{i=0}^{2^n-1}B\big(x_{n,i},\big(2^{-n}e^{-S_nf(x_{n,i})}\big)^s\big)\]
is quite large.
However, this neither guarantees that
\begin{equation}\label{eq:not full}
	[0,1]=E_{n,s}\quad\text{nor that}\quad \lm\Big(\limsup_{n\to\infty}E_{n,s}\Big)=1,
\end{equation}
because the lengths of these intervals are different, making it possible for them to overlap (see Figure \ref{fig:E_ns} for an illustration and Figure \ref{fig:E_nsp} for a comparison).
\begin{figure}[H]
	\centering
	\includegraphics{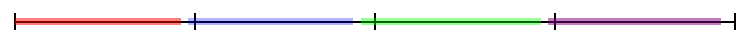}
	\caption{When $f$ is constant, the enlarged intervals contained in $E_{2,s}$ are well-separated and therefore efficiently cover $[0,1]$.}
	\label{fig:E_nsp}
\end{figure}
\begin{figure}[H]
	\centering
	\includegraphics{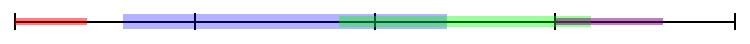}
	\caption{When $f$ is not constant, the enlarged intervals contained in $E_{2,s}$ may overlap and therefore fail to cover $[0,1]$ efficiently.}
	\label{fig:E_ns}
\end{figure}
Therefore, one should choose a parameter $t$ significantly smaller than $s$ to ensure that one of the equalities in \eqref{eq:not full} holds with $s$ replaced by $t$, i.e., $E_{n,t}$ effectively covers $[0,1]$.
This leads to the situation that although Theorem \ref{t:MTPBV} is still applicable, it does not directly provide the desired dimension estimates.

To address this shortfall, Wang and Zhang \cite{WangZhangDMPMathZ} developed an alternative mass transference principle from a dynamical perspective. Utilizing their principle, they successfully recovered the Hausdorff dimension of $W(T_2,f,x_0)$. Although the dimension result for $W(T_2,f,x_0)$ has been known, their work is significant in providing a new framework that connects shrinking target problems with mass transference principle. However, their principle does not extend to more general transformations such as the $\beta$-transformations or the Gauss map, nor can it be applied to settings where targets are defined by arbitrary open sets rather than balls.  The main goal of this paper is to address precisely this issue. Our purpose is to develop a framework capable of handling shrinking target problems --- along with various generalizations --- for the $\beta$-transformations and the Gauss map, and to extend the theory beyond the classical setting of balls to more general open sets. It is also worth noting that the setting in \cite{WangZhangDMPMathZ} is somewhat technical --- for example, it relies on the dynamical ubiquity and topological exactness assumptions --- and is almost entirely different from the current one that we will describe.

Part of the inspiration for our approach originates from the work of Barral and Seuret~\cite{BarralSeuretMTP} and Daviaud~\cite{DaviaudMTPJMAA}, who established that for a quasi-Bernoulli probability measure $\nu$ (see, e.g., \cite[Definition 2.3]{DaviaudMTPJMAA}),
\begin{equation}\label{eq:inhoMTP}
	\begin{split}
		&\nu\Big(\limsup_{n\to\infty}B(x_n,r_n)\Big)=1\quad\text{with\quad$\lim_{n\to\infty}r_n=0$}\\
		\Longrightarrow\quad&\hdim\Big(\limsup_{n\to\infty}B(x_n,r_n^\tau)\Big)\ge \frac{\hdim \nu}{\tau}\quad\text{ for any $\tau>1$}.
	\end{split}
\end{equation}
Here, the Hausdorff dimension of a measure $\nu$ is defined via its lower local dimension at $x$,
\begin{equation}\label{eq:definition of local dimension}
	\underline{D}(\nu,x):=\liminf_{r\to 0}\frac{\log\nu(B(x,r))}{\log r},
\end{equation}
and the lower and upper Hausdorff dimensions of $\nu$ are given by
\[\begin{split}
	\hdiml\nu:=&\essinf \underline{D}(\nu,x)=\inf\{\hdim E:E\text{ is a Borel set with }\nu(E)>0\},\\
	\hdimu\nu:=&\esssup \underline{D}(\nu,x)=\inf\{\hdim E:E\text{ is a Borel set with }\nu(E)=1\}.
\end{split}\]
If $\hdiml \nu = \hdimu \nu$, their common value is denoted by $\hdim \nu$. However, the quasi-Bernoulli property holds for the Gibbs measures associated with the doubling map, but generally fails for the $\beta$-transformations and the Gauss map. This limitation motivates the development of new concepts capable of handling these cases.  To this end, we introduce the notion of {\em quasi-self-conformality} of a measure.
\begin{defn}[Quasi-self-conformality]\label{d:quasiselfconf}
	Let $\nu$ be a Borel probability measure supported on a metric space $X$, and let $\mathcal{F} = \{F_n\}$ be a collection of closed subsets of $X$. We say that $\nu$ is quasi-self-conformal with respect to $\mathcal{F}$ if there exists a constant $C \ge 1$ such that for every $F_n \in \mathcal{F}$, there exists a bijection $f_n: F_n \to X$ satisfying:
	\begin{enumerate}[\upshape (1)]
		\item $C^{-1} \dfrac{|x - y|}{|F_n|} \le |f_n(x) - f_n(y)| \le C \dfrac{|x - y|}{|F_n|}$ for all $x, y \in F_n$;
		\item The normalized pushforward measure $\nu^{(n)} := \dfrac{\nu \circ f_n^{-1}}{\nu(F_n)}$ satisfies
		\[
		C^{-1} \nu(E) \le \nu^{(n)}(E) \le C\nu(E)\quad\text{for any Borel set $E$}.
		\]
	\end{enumerate}
\end{defn}
The notion of quasi-self-conformality arises as an appropriate generalization of the classical concept of self-conformality for sets, designed to capture approximately self-conformal structures exhibited by measures.

To formulate our main result, we recall the notion of Hausdorff content. In this paper, we focus on the case where the ambient space $X$ is a compact subset of $\mathbb{R}^d$. For any $s \ge 0$ and a set $E\subset \R^d$, the \emph{$s$-dimensional Hausdorff content} of $E$ is defined by
\[\hc^s(E)=\inf\bigg\{\sum_{i}|B_i|^s:E\subset \bigcup_{i\ge 1}B_i, \text{ where $B_i$ are balls}\bigg\}.\]
Our method further enables us to establish the so-called \emph{large intersection property}, introduced and systematically studied by Falconer \cite{FalconerLIPintroduce}.
\begin{defn}[\cite{FalconerLIPintroduce}]\label{d:LIP}
	Let $0<s\le \hdim X$. We define $\scg^s(X)$ to be the class of $G_\delta$-subsets $E$ of $X$ such that there exists a constant $c>0$ such that for any $0<t<s$ and any ball $B$,
	\begin{equation}\label{eq:defhcbound}
		\hc^t(E\cap B)>c\hc^t(B).
	\end{equation}
\end{defn}
Falconer originally defined this property in $\mathbb{R}^d$, and it has later been extended to general metric spaces. It has been shown, for example in \cite[Theorem 2.4]{HeMTPadv}, that if $X$ supports a $\delta$-Ahlfors regular measure, then the class $\scg^s(X)$ is closed under countable intersections, and moreover,
\[\hdim E\ge s\quad\text{for all $E\in \scg^s(X)$}.\]

\begin{thm}\label{t:MTP}
	Let $X\subset\R^d$ be a compact subset equipped with a $\delta$-Ahlfors regular probability measure $\mu$. Let $\nu$ be a quasi-self-conformal probability measure with respect to a collection of closed sets $\mathcal{F} = \{F_n\}$ in $X$, such that
	\begin{equation}\label{eq:fullambient}
		\mu\Big(\limsup_{n\to\infty} F_n\Big) = 1\qaq\lim_{n\to\infty}|F_n|=0.
	\end{equation}
	Suppose that there exist a sequence of balls $\{B(x_n,r_n)\}$ with centers in $X$, and a sequence of open sets $\{E_n\}$ in $X$, satisfying the following conditions:
	\begin{enumerate}
		\item $r_n\to 0$ as $n\to\infty$;
		\item $\nu\big(\limsup B(x_n,r_n)\big)=1$;
		\item $ E_n\subset B_n $. Moreover, there exists a constant $s\ge 0$ such that
		\[\hc^s(E_n)\gg r_n^{\hdimu\nu},\]
		where the implied constant is independent of $n$.
	\end{enumerate}
	Then,
	\[\limsup_{n\to\infty}E_n\in \scg^s(X).\]
\end{thm}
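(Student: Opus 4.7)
Since each $E_n$ is open, $\limsup_n E_n$ is automatically a $G_\delta$ set, so the task reduces to producing an absolute constant $c>0$ with
\[
\hc^t\bigl((\limsup_n E_n)\cap B\bigr)\;\ge\;c\,\hc^t(B)
\]
for every ball $B \subset X$ and every $0 < t < s$. Since $\mu$ is $\delta$-Ahlfors regular, $\hdimu\mu = \delta$ and $\hc^t(B) \asymp |B|^t$, so the inequality to establish becomes $\hc^t((\limsup_n E_n)\cap B) \gtrsim |B|^t$, uniformly in $t$ and $B$.

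The strategy is a localization step followed by a Cantor-type iteration. Fix a ball $B = B(x_0,\rho)$ and $t<s$. From $\mu(\limsup F_n) = 1$ combined with Ahlfors regularity of $\mu$, I extract a closed $F_m \subset B$ of diameter comparable to $\rho$. The bijection $f_m : F_m \to X$ of Definition \ref{d:quasiselfconf} is bi-Lipschitz with distortion $|F_m|^{-1}$, which transplants the ambient problem into $F_m$ at the smaller scale $|F_m|$: by Remark \ref{r:quasi}(a), $f_m^{-1}(B(x_n,r_n))$ is, up to absolute constants, a ball in $F_m$ of radius $r_n|F_m|$; bi-Lipschitz scaling of Hausdorff content gives $\hc^s(f_m^{-1}(E_n)) \asymp |F_m|^s\hc^s(E_n) \gg |F_m|^s r_n^\delta$; and restricting the full-measure condition $\mu(\limsup_n B(x_n,r_n))=1$ to $F_m$ and pushing through $f_m$ (using Definition \ref{d:quasiselfconf}(2) for $\nu$ together with the $\delta$-regularity of $\mu$) yields full $\mu$-measure of the pulled-back $\limsup$ inside $F_m$. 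Because the pulled-back sets all live in $F_m \subset B$, any point captured by the pulled-back construction produces a point in $(\limsup_n E_n)\cap B$.

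The content estimate on the pulled-back system is then obtained by a Beresnevich--Velani-type Cantor construction adapted to open-set targets. At each level, a Vitali-type covering argument (justified by $r_n \to 0$ and the full-measure condition in the current cylinder) extracts a disjoint family of balls inside the cylinder whose total $\mu$-mass is a definite fraction of the cylinder's; inside each ball, a compact $\hc^s$-content-realising subset of the corresponding $E_n$ is retained via condition (3); and the construction recurses inside each retained piece by choosing a fresh sub-cylinder $F_{m'}$, whose existence at arbitrarily small scales follows from $\mu(\limsup F_n) = 1$ and the Ahlfors regularity of $\mu$. The quasi-self-conformality of $\nu$ plays a decisive role here: it keeps the rescaled problem equivalent to the original with multiplicative constants that do not deteriorate with depth, which a generic Ahlfors regular measure cannot guarantee. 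A standard mass-distribution argument on the resulting Cantor-type subset $K \subset (\limsup_n E_n)\cap B$ then delivers $\hc^t(K) \gtrsim |B|^t$, with constants uniform in $t<s$ and $B$.

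\textbf{Main obstacle.} The principal challenge is converting the single-scale inner content bound $\hc^s(E_n) \gg r_n^\delta$ into an $\hc^t$-bound valid for every $t<s$, as $\scg^s(X)$-membership requires. The iteration above performs this conversion, but only if three ingredients are balanced uniformly throughout the recursion: the controlled shrinkage of diameters (condition (1)), the $\hc^s$-mass contributed by each retained $E_n$ (condition (3)), and the quasi-self-conformal rescaling constants remaining absolute at every depth (Definition \ref{d:quasiselfconf}). Handling retained pieces of positive $\hc^s$-content but zero $\mu$-measure --- inevitable when $s < \delta$ --- is where the quasi-self-conformality of $\nu$ does the decisive work beyond the classical Beresnevich--Velani framework, and is where I expect the bulk of the technical difficulty to lie.
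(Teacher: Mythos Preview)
Your proposal has a genuine gap in the ``transplanting'' step. You form $f_m^{-1}(E_n)\subset F_m\subset B$ and then assert that ``any point captured by the pulled-back construction produces a point in $(\limsup_n E_n)\cap B$.'' But $x\in f_m^{-1}(E_n)$ means $f_m(x)\in E_n$, not $x\in E_n$; the sets $f_m^{-1}(E_n)$ and $E_n$ are in general unrelated. A Cantor set built from the pulled-back pieces therefore sits in $B$ but has no reason to sit in $\limsup_n E_n$. The bijections $f_m$ cannot be used to relocate the \emph{targets}; they are only good for transporting properties of the \emph{measure}~$\nu$. (Incidentally, the theorem statement contains typos: in conditions~(2) and~(3) the symbol $\mu$ should be $\nu$, as the proof and the applications make clear. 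Your reading $\hdimu\mu=\delta$ is a symptom of this.)

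The paper's argument is structurally different and avoids this trap. It is a finite two-level construction, not a Cantor iteration. First, a KGB-type lemma extracts a well-separated family $\{F_i\}\subset B$ with $\sum_i\mu(F_i)\gg\mu(B)$. Second---and this is the idea your plan is missing---inside each $F_i$ one uses the \emph{original} sets $E_n$: since $\nu(\limsup_n B(x_n,r_n))=1$, $\nu$-almost every $z\in F_i$ lies in infinitely many $B(x_{n_z},r_{n_z})$, and for $r_{n_z}$ small enough the whole of $E_{n_z}\subset B(x_{n_z},r_{n_z})$ already lies inside $B$. A Besicovitch selection then yields a disjoint family of such $E_{n_z}$'s inside each $F_i$. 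The quasi-self-conformality enters not to rescale targets but to pull back the \emph{local-dimension} structure of $\nu$: the set $E_\nu^{N,\varepsilon}=\{x:\nu(B(x,r))\le r^{\hdimu\nu-\varepsilon}\ \text{for all }r<1/N\}$ has positive $\nu$-measure, and $f_i^{-1}(E_\nu^{N,\varepsilon})$ furnishes, inside each $F_i$, centres $z$ at which $\nu(B(z,\rho)\cap F_i)/\nu(F_i)\ll(\rho/|F_i|)^{\hdimu\nu-\varepsilon}$ uniformly. This H\"older-type control at the intermediate scale (between $|F_i|$ and $r_{n_z}$) is precisely what makes the mass-distribution estimate go through and is the mechanism that converts the hypothesis at exponent $s$ into a content bound at exponent $t=s-\varepsilon$. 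Your outline never invokes the local dimension of $\nu$, and without it there is no visible route from $\hc^s(E_n)\gg r_n^{\hdimu\nu}$ to an $\hc^t$ bound.
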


\begin{rem}
The assumption that $X \subset \mathbb{R}^d$ is essential, as our arguments rely on the Besicovitch covering theorem, which generally does not hold in  metric spaces. This arises because the measure $\nu$ is generally not doubling, which prevents many standard covering lemmas from applying effectively --- except for the Besicovitch covering theorem.
\end{rem}
\begin{rem}
	The condition in \eqref{eq:fullambient} serves to characterize the extent to which the measure $\nu$ exhibits quasi-self-conformality. In many case, a measure $\nu$ satisfy this condition is a Gibbs measure, and thus generally singular to the ambient measure $\mu$. For example, if $\mu$ is the Lebesgue measure, and $\nu$ is the $(p,1-p)$-Bernoulli measure with $0<p<1$, then we can take $F_n$ to be the dyadic intervals and $f_n$ be the linear map that sends $F_n$ onto $[0,1]$. Therefore, $\nu$ satisfies \eqref{eq:fullambient}. We will discuss additional examples of interest, such as the $\beta$-transformations in Section \ref{s:beta} and the Gauss map in Section \ref{s:Gauss}.
\end{rem}
\begin{rem}
It is important to highlight the differences and the novelty of Theorem \ref{t:MTP} in comparison with existing `balls to open sets' mass transference principles \cite{HeMTPadv,KoivusaloRamsMTPIMRN,PerssonMTPLIPreal,ZhongMTPJMAA}. First, the results of the latter focus on the cases $\nu=\mu$, in which the estimate $\mu(B(x,r))\asymp r^\delta$ for arbitrary balls plays a crucial role in the proof. However, if $\nu$ is singular with respect to $\mu$, then $\nu$ does not satisfy this estimate and, worse still, may fail to be doubling. Therefore, new approaches are required, and this is one of the reasons why we introduce the notion of quasi-self-conformality; for further details, see Remark \ref{r:not Ahlfors}. Second, as illustrated by the example of $W(T_2,f,x_0)$, the results in \cite{HeMTPadv,KoivusaloRamsMTPIMRN,PerssonMTPLIPreal,ZhongMTPJMAA} cannot be applied in a straightforward manner to determine the Hausdorff dimension of $W(T_2,f,x_0)$. As we will see in Section \ref{ss:proofbeta}, together with some known results from thermodynamic formalism, Theorem \ref{t:MTP} can be directly applied to obtain the lower bound for $\hdim W(T_2,f,x_0)$, in the same manner that Theorem \ref{t:MTPBV} is used to obtain $\hdim W(\tau)$. This largely demonstrates the effectiveness of our theorem in addressing Diophantine approximation problems in dynamical systems.
\end{rem}

We now apply Theorem \ref{t:MTP} to recover and extend classical results for the $\beta$-transformations and the Gauss map. For a Lipschitz function $h:X\to X$, its {\em Lipschitz constant} is defined as the smallest $L>0$ such that for any $x,y\in X$,
\[|h(x)-h(y)|\le L|x-y|.\]
Let $\{h_n\}$ be a sequence of Lipschitz functions with uniformly bounded Lipschitz constants and let $f$ be a positive and continuous function defined on $X$. The modified shrinking target sets are defined as
\[W(T,f,\{h_n\})=\{x\in X: |T^nx-h_n(x)|<e^{-S_nf(x)}\text{ for i.m. $n$}\}.\]
\begin{thm}\label{t:recoverspt}
	Let $f$ be a positive and continuous function defined on $X$. Suppose that $X=[0,1]$ and that $T$ is either the $\beta$-transformation or the Gauss map. Then,
	\[W(T,f,\{h_n\})\in\scg^s([0,1]),\]
	where $s$ is the unique solution to $P(-s(f+\log|T'|),T)=0$.
\end{thm}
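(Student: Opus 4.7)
The plan is to apply Theorem \ref{t:MTP} with $X = [0,1]$ and $\mu$ equal to Lebesgue measure, so that $\delta = \hdimu\mu = 1$. For $\nu$ I would take the Gibbs measure associated with the H\"older potential $\phi := -s(f + \log|T'|)$; because $s$ is defined by $P(\phi,T)=0$, Ruelle--Perron--Frobenius theory for both $T_\beta$ (on the one-sided $\beta$-shift) and the Gauss map supplies $\nu$ together with the standard cylinder estimate
\[
\nu(I_n(w)) \;\asymp\; e^{S_n \phi(x_w)} \;=\; e^{-s S_n f(x_w)}\,|I_n(w)|^s\qquad (x_w\in I_n(w)),
\]
the second equality using the bounded-distortion identity $|I_n(w)|^{-1}\asymp|(T^n)'(x_w)|$. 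Quasi-self-conformality of $\nu$ with respect to the family $\cf$ of full cylinders then follows: on $F=I_n(w)\in\cf$, the iterate $T^n|_F:F\to[0,1]$ is a bijection, bounded distortion gives the bi-Lipschitz part of Definition \ref{d:quasiselfconf}(1) with constants $\asymp|F|^{-1}$, and the displayed Gibbs estimate gives the pushforward comparability (2). The ambient covering requirement $\mu(\limsup F)=1$ holds trivially for the Gauss map, and for $T_\beta$ it holds once one restricts $\cf$ to full cylinders, since the set of points belonging to a full cylinder at level $n$ has uniform positive Lebesgue density.

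The decisive choice is that of the balls and open sets. For each $F=I_n(w)\in\cf$ put
\[
E_F := \mathrm{int}(F)\cap\{x:|T^n x - h_n(x)|<e^{-S_n f(x)}\}, \qquad B_F := B(c_F,|E_F|^s),
\]
with $c_F$ the midpoint of $E_F$. Because $|h_n'|\le L$ uniformly while $|(T^n)'|\asymp|F|^{-1}\to\infty$, the map $x\mapsto T^n x - h_n(x)$ is bi-Lipschitz on $F$ with constants $\asymp|F|^{-1}$ for all sufficiently large $n$, and bounded distortion keeps $S_n f$ nearly constant on $F$; hence
\[
|E_F|\asymp e^{-S_n f(x_F)}|F|, \qquad \hc^s(E_F)\asymp|E_F|^s\asymp\nu(F).
\]
Since $s\le 1$ one has $|E_F|^s\ge|E_F|$, so $B_F\supset E_F$, and condition (3) of Theorem \ref{t:MTP} reads $\hc^s(E_F)\gg r_F^{\hdimu\mu}=r_F$, which is equality up to a multiplicative constant.

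The condition $\mu(\limsup B_F)=1$ is where $P(\phi,T)=0$ is used decisively. At each level $n$, the Gibbs estimate together with the zero-pressure identity delivers
\[
\sum_{|w|=n}\mu(B_{I_n(w)}) \;\asymp\; \sum_w |E_{I_n(w)}|^s \;\asymp\; \sum_w \nu(I_n(w)) \;\asymp\; 1,
\]
the restriction to full cylinders for $T_\beta$ costing only a uniform constant, and summing over $n$ gives divergence of $\sum_F\mu(B_F)$. Coupling this divergence with the exponential decay of correlations of $T_\beta$ against the Parry measure (equivalent to Lebesgue, with density bounded away from $0$ and $\infty$) and the analogous Gauss--Kuzmin mixing for the Gauss map, a quantitative Borel--Cantelli lemma yields $\mu(\limsup B_F)=1$. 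I expect this divergence-plus-mixing step to be the main technical obstacle, because the balls $B_F$ live on many different scales, can straddle cylinder boundaries, and the events $\{x\in B_F\}$ are not directly covered by off-the-shelf shrinking-target measure theorems.

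Theorem \ref{t:MTP}, applied to an enumeration $\{F_j\}_{j\in\N}$ of $\cf$ with $B_j:=B_{F_j}$ and $E_j:=E_{F_j}$, then yields $\limsup_{j\to\infty} E_j\in\scg^s([0,1])$. Since each $E_j$ lies inside the shrinking target at the corresponding level, $\limsup_j E_j\subset W(T,f,\{h_n\})$; as $W(T,f,\{h_n\})$ is a $G_\delta$ superset and the class $\scg^s$ is closed under $G_\delta$ supersets in the Ahlfors regular setting, the large intersection property transfers to $W(T,f,\{h_n\})$, completing the proof.
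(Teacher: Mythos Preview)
Your setup of $\mu$, $\nu$, $\cf$ and the verification of quasi-self-conformality is correct and matches the paper. The genuine gap is in your choice of the balls $B_F$ and the way you plan to verify hypothesis (2) of Theorem~\ref{t:MTP}.

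You have read conditions (2) and (3) of Theorem~\ref{t:MTP} as involving the ambient measure $\mu$; but if you look at the proof in Section~\ref{s:MTP} (and at how the theorem is applied in Sections~\ref{s:beta}--\ref{s:Gauss}), both conditions are actually used with the reference measure $\nu$: one needs $\nu\bigl(\limsup B(x_n,r_n)\bigr)=1$ and $\hc^s(E_n)\gg r_n^{\hdimu\nu}$. With the $\mu$-reading, the quasi-self-conformal measure $\nu$ would play no role at all, and the statement would collapse to an ordinary ball-to-open-set MTP for Ahlfors regular measures; with that reading your choice $r_F=|E_F|^s$ satisfies (3) trivially but forces you into the quantitative Borel--Cantelli/mixing argument for (2) that you yourself flag as the main obstacle. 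That step is not only hard in the generality you need (balls of wildly varying radii that do not sit inside a single cylinder scale), it is also precisely what Theorem~\ref{t:MTP} is designed to \emph{avoid}. Note also that with the correct $\nu$-reading your choice $r_F=|E_F|^s$ fails condition~(3), since $\hc^s(E_F)\asymp r_F$ while $r_F^{\hdimu\nu}\ge r_F$ whenever $\hdimu\nu<1$.

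The paper's route is to take the balls $B(x_n,r_n)$ to be the (full) cylinders themselves, but only those cylinders on which the Birkhoff average of $f$ is $\varepsilon$-close to $\int f\,d\nu_s$. By the ergodic theorem this restricted family still has $\nu_s\bigl(\limsup B(x_n,r_n)\bigr)=1$, so (2) is free. On such a cylinder $I_n(\be_n)$ one has $S_nf\approx n\int f\,d\nu_s$ uniformly, hence the shrinking-target slice $E_n=I_n(\be_n)\cap\{|T^nx-h_n(x)|<e^{-S_nf(x)}\}$ contains an interval of length $\asymp |I_n(\be_n)|\,e^{-n\int f\,d\nu_s-n\varepsilon}$. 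Using the variational principle $h_{\nu_s}=s\int(f+\log|T'|)\,d\nu_s$ and the dimension formula $\hdim\nu_s=h_{\nu_s}/\!\int\log|T'|\,d\nu_s$, a direct computation gives $\hc^{s-K\varepsilon}(E_n)\gg |I_n(\be_n)|^{\hdim\nu_s}=r_n^{\hdimu\nu_s}$, which is exactly condition~(3). Theorem~\ref{t:MTP} then yields $W(T,f,\{h_n\})\in\scg^{s-K\varepsilon}$, and letting $\varepsilon\to0$ (with $K$ fixed) concludes. No divergence/mixing argument is needed anywhere.
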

\begin{rem}
	The lower bound for the Hausdorff dimension of $W(T,f,\{h_n\})$ implied in Theorem \ref{t:recoverspt} was previously established in \cite{BugeaudWangSTPJFG,TanWangRecurbetaadv,LiWangWuXuSTPPLMS,WangSTPJMAA}. However, those results rely on the construction of large Cantor-type sets and do not imply the large intersection property. Interestingly, Theorem \ref{t:MTP} offers a different perspective: the problem is reduced to seeking a suitable Gibbs measure and estimating its Hausdorff dimension. This perspective may provide new insights into the interplay between thermodynamic formalism and dynamical Diophantine approximation. It can be seen from our proof that Theorem \ref{t:MTP} is also applicable to the shrinking target problems in expanding Markov maps with finite partitions on $[0,1]$, see Remark \ref{r:applicable to Markov} for further details. The only reason we restrict ourselves to these two cases is that the
	$\beta$-transformations and the Gauss map are among the most well-known non-Markov map and expanding map with infinitely many branches, respectively.
\end{rem}
\begin{rem}
	After completing the proofs of our main results, the author became aware that Daviaud \cite{DaviaudSTPETDS} had employed some similar ideas to study the shrinking target problem for self-conformal sets with overlaps. However, his results neither imply the large intersection property nor can they be directly applied to the $\beta$-transformations or the Gauss map.
\end{rem}
Theorem~\ref{t:recoverspt} is a direct application of Theorem~\ref{t:MTP}, where the sets $E_n$ are taken to be balls. To further demonstrate the versatility of our main result, we present a concise proof of the following theorem, which was also previously established in~\cite{HuangWuXuconsecutiveISM}.
Let $m\ge 1$ be an integer and $B>1$. Define
\[F_m(B):=\{x\in[0,1): a_{n+1}(x)\cdots a_{n+m}(x)\ge B^n\text{ for i.m. $n$}\},\]
where $a_n(x)$ denotes the $n$th partial quotient of $x$ (see Section~\ref{s:Gauss} for the definition).
\begin{thm}\label{t:consecutive}
	Let $m\ge 1$ be an integer and $B>1$. Then,
	\[F_m(B)\in\scg^u([0,1]),\]
for some $u \in (1/2, 1)$ satisfying \[P(-u \log|G'| - g_m(u) \log B, G) = 0,\]
where the function $g_m(u)$ is given by
\[g_m(u)=\frac{u^m(2u-1)}{u^m-(1-u)^m}.\]
\end{thm}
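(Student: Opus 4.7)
The approach is to apply Theorem~\ref{t:MTP} with $X=[0,1]$, $\mu$ the Lebesgue measure (so $\hdimu\mu=1$), $\mathcal{F}$ the collection of all Gauss cylinders, and $\nu$ the Gibbs measure associated with the potential
\[\varphi:=-u\log|G'|-g_m(u)\log B,\]
where $u\in(1/2,1)$ is the unique root of $P(\varphi,G)=0$. For each $n$-cylinder $I=I_n(a_1,\ldots,a_n)$, define
\[E_I:=\bigsqcup_{b_1\cdots b_m\ge B^n}I_{n+m}(a_1,\ldots,a_n,b_1,\ldots,b_m)\subset I,\]
and enumerate the pairs $(n,(a_1,\ldots,a_n))$ as $k=1,2,\ldots$. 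Let $B_k$ be a ball of radius $r_k\asymp|I_n|$ enclosing the associated cylinder and let $E_k=E_I$; then $F_m(B)=\limsup_k E_k$ with $E_k\subset B_k$.

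The standard hypotheses of Theorem~\ref{t:MTP} are verified as follows. Quasi-self-conformality of $\nu$ with respect to $\mathcal{F}$ follows from the bounded distortion of the iterates $G^n$ on cylinders together with the Gibbs property, taking the bijection $f_I:=G^n|_I\colon I\to[0,1]$. The conditions $\mu(\limsup F_k)=\mu(\limsup B_k)=1$ are immediate because, at each depth, cylinders partition $[0,1]$ modulo a Lebesgue-null set; and $r_k\to0$ holds after enumerating by increasing depth.

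The core step is to verify $\hc^u(E_k)\gg r_k$. Bounded distortion of $f_I$ (which expands by a factor $\asymp|I_n|^{-1}$) yields
\[\hc^u(E_I)\asymp|I_n|^u\cdot\hc^u(\tilde E_n),\qquad \tilde E_n:=\bigsqcup_{b_1\cdots b_m\ge B^n}I_m(b_1,\ldots,b_m),\]
a set that is independent of $(a_1,\ldots,a_n)$. Since $r_k\asymp|I_n|$, the desired bound reduces to $\hc^u(\tilde E_n)\gg|I_n|^{1-u}$, whose hardest instance occurs for the largest depth-$n$ cylinders (whose diameters decay exponentially in $n$ at the slowest rate). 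This content estimate is established via the mass distribution principle applied to a Frostman-type measure built by weighting each $m$-cylinder $I_m(b_1,\ldots,b_m)\subset\tilde E_n$ in accordance with a Gibbs weight tuned to the product constraint. The precise form of $g_m(u)$ emerges from a Lagrange-multiplier optimization that balances the $u$-dimensional Frostman condition against $b_1\cdots b_m\ge B^n$, so that the associated partition function $\sum_{\prod b_i\ge B^n}|I_m(b_1,\ldots,b_m)|^u$ attains the required order of magnitude.

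The main obstacle is this content estimate: the continued-fraction denominators $q_m(b_1,\ldots,b_m)$ controlling $|I_m|$ coincide with $\prod b_i$ only when all $b_i$'s are sufficiently large, so a careful stratification of the configurations $(b_1,\ldots,b_m)$ according to the magnitudes of the $b_i$'s (and the resulting behavior of $q_m$) is needed in order to exhibit an optimizer and to show that the Gibbs weight produces the correct exponent $g_m(u)$. Once the bound $\hc^u(E_k)\gg r_k$ is in hand, Theorem~\ref{t:MTP} immediately yields $F_m(B)=\limsup_k E_k\in\scg^u([0,1])$, as asserted.
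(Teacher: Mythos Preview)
Your overall architecture is right—apply Theorem~\ref{t:MTP} with $\nu$ the Gibbs measure for $\varphi=-u\log|G'|-g_m(u)\log B$ and the Gauss cylinders as $\mathcal F$—but the plan has a genuine gap in the verification of condition~(3).

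First, the exponent in condition~(3) of Theorem~\ref{t:MTP} should be $\hdimu\nu$, not $\hdimu\mu$: this is what the proof of Theorem~\ref{t:MTP} actually uses (see \eqref{eq:lambda}) and what both applications in the paper verify. So the target inequality is $\hc^u(E_I)\gg |I_n|^{\hdim\nu_u}$, not $\gg|I_n|$.

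Second, and more seriously, neither version of the content bound can be verified \emph{uniformly over all depth-$n$ cylinders} as you propose. After your bounded-distortion reduction you need $\hc^u(\tilde E_n)\gg |I_n|^{1-u}$ (or $|I_n|^{\hdim\nu_u-u}$), with the left-hand side depending only on $n$ but the right-hand side depending on the particular cylinder. For the largest cylinder $I_n(1,\dots,1)$ one has $|I_n|\asymp\phi^{-2n}$, while the optimal lower bound for $\hc^u(\tilde E_n)$ is of order $B^{-ng_m(u)}=e^{-nP(-u\log|G'|,G)}$; a derivative comparison at $u=1$ (where $-\partial_u P=-\int\log|G'|\,d\mu_{\mathrm{Gauss}}<-2\log\phi$) shows the required inequality fails for $u$ near $1$, and similarly for $u$ near $1/2$. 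The paper avoids this by first restricting, via Birkhoff's ergodic theorem, to the subfamily $\Gamma_n(\nu_u,\varepsilon)$ of cylinders on which $\frac1n S_n\log|G'|\approx\int\log|G'|\,d\nu_u$; the $\limsup$ over this subfamily still has full $\nu_u$-measure (which is what condition~(2) requires, again with $\nu$ in place of $\mu$), and on these cylinders $|I_n|^{\hdim\nu_u}$ has exactly the right order to match the content of $E_I$.

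Finally, the ``Lagrange-multiplier/stratification'' description of the content estimate is too vague to constitute a proof. The paper's argument is concrete: it passes to the single regime $a_{n+i}\in[\alpha_i^n,2\alpha_i^n]$ with $\alpha_i=B^{g_m(u)(1-u)^{i-1}u^{-i}}$ chosen so that $\alpha_1^u=(\alpha_1\cdots\alpha_{k-1})^{2u-1}\alpha_k^u=B^{g_m(u)}$ for all $k$, places a uniform-times-Lebesgue measure $\lambda$ on the resulting finite union of $(n{+}m)$-cylinders, and proves a H\"older bound $\lambda(B(x,r))\ll r^{u-K\varepsilon}q(\bm a_n)^{2(u-K\varepsilon)}B^{ng_m(u)}$ case-by-case in the scale $r$ (Lemma~\ref{l:holder}). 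The obstacle you flag about $q_m$ versus $\prod b_i$ is not the real difficulty; since all $\alpha_i>1$, the chosen $b_i$'s are large and $q_m\asymp\prod b_i$ automatically.
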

The structure of the paper is as follows. In Section~\ref{s:pre}, we collect several foundational results and technical tools that will be used throughout the paper. Section~\ref{s:MTP} is devoted to the proof of our main result, i.e. Theorem \ref{t:MTP}. In Section~\ref{s:beta}, we review the definition and key properties of the $\beta$-transformations, and apply our main result to obtain the large intersection property of $W(T_\beta,f,\{h_n\})$ in this setting. Section~\ref{s:Gauss} serves a similar purpose for the Gauss map: we recall its basic properties and then apply our theorem to derive the large intersection properties of $W(G,f,\{h_n\})$ and $F_m(B)$.

\section{Preliminary}\label{s:pre}
This section recalls key tools from geometric measure theory and covering arguments that underpin the main results of this paper. We start by presenting two fundamental results that relate the Hausdorff content of a set to probability measures exhibiting appropriate local dimension estimates. Here and hereafter, we will assume that $X\subset \R^d$ is compact equipped with a $\delta$-Ahlfors regular measure $\mu$.
\begin{prop}[Mass distribution principle {\cite[Lemma 1.2.8]{BishopPeresbook}}]\label{p:MDP}
	Let $ E $ be a Borel subset of $ \R^d $. Suppose that $ E $ supports a Borel probability measure $ \lambda $ that satisfies
	\[\lambda(B(x,r))\le cr^s,\]
	for all $x\in\R^d$ and $r>0$, where $ 0<c<\infty $ is a constant. Then
	\[\hc^s(E)\ge1/c .\]
\end{prop}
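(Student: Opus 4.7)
The plan is to use the measure $\lambda$ itself to test arbitrary ball covers of $A$, lifting the pointwise bound $\lambda(B(x,r)) \le c r^s$ to a global lower bound on the $s$-dimensional content via countable additivity. Since $\lambda$ is a probability measure with total mass concentrated on $A$, for any countable cover $A \subset \bigcup_i B_i$ with $B_i = B(x_i, r_i)$, monotonicity and countable subadditivity of $\lambda$ immediately yield
\[1 = \lambda(A) \le \sum_i \lambda(B_i).\]

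Next I would apply the hypothesis to each ball in the cover to obtain $\lambda(B_i) \le c r_i^s$. Since for balls in $\mathbb{R}^d$ we have $|B_i| = 2 r_i$ (where $|\cdot|$ denotes diameter), this may be rewritten as $\lambda(B_i) \le c' |B_i|^s$ with the factor $2^s$ absorbed into an adjusted constant $c'$. Summing and rearranging produces $\sum_i |B_i|^s \ge 1/c'$, and taking the infimum over all admissible ball covers yields $\hc^s(A) \ge 1/c'$. This is the claimed inequality; the precise constant $1/c$ versus $1/c'$ is immaterial and merely reflects the chosen convention relating radii to diameters.

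There is really no obstacle here: the entire argument reduces to the single observation that a probability mass supported on $A$ must be ``detected'' by any covering family, so the Hausdorff content cannot be smaller than the reciprocal of the local dimension constant. The only bookkeeping point is the radius/diameter convention, which is harmless. This is a textbook proof, and the proposition is invoked purely as a technical tool --- the interesting work of the paper lies in constructing, in later sections, the auxiliary measure $\lambda$ on $\limsup_n E_n$ that satisfies the required local decay rate.
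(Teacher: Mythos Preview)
Your argument is correct and is the standard proof of the mass distribution principle. The paper does not supply its own proof of this proposition --- it is quoted as a known result from Bishop--Peres --- so there is nothing to compare against; your handling of the radius/diameter convention is also appropriate given the paper's definition of $\hc^s$ in terms of diameters.
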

\begin{lem}[{Frostman's lemma \cite[Theorem 8.8]{Mattilageometry1999}}]\label{l:frostman}
		Let $ E $ be a Borel subset of $ \R^d $. If $\hc^s(E)>0$, then there exists a probability measure $\lambda$ supported on $E$ such that for any $x\in\R^d$ and $r>0$,
	\[\lambda(B(x,r))\ll \frac{r^s}{\hc^s(E)},\]
	where the unspecified constant depends only on $d$.
\end{lem}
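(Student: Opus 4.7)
The plan is to use the classical dyadic-cube construction due to Frostman, turning the Hausdorff content bound into a measure via a top-down mass-redistribution procedure. First I would reduce to the case that $A$ is compact: by inner regularity of $\hc^s$ on Borel subsets of $\R^d$ (or, more directly, by observing that the defining covers of $\hc^s(A)$ may be restricted to balls of diameter $<1$, so any Borel $A$ with $\hc^s(A)>0$ contains a compact subset of positive $s$-content), we may assume $A$ is contained in a fixed dyadic cube $Q_0$. Let $\cd_n$ denote the half-open dyadic subcubes of $Q_0$ of side length $2^{-n}|Q_0|$.

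Fix a large integer $N$. I would construct a finite measure $\lambda_N$ in two stages. \emph{Stage one:} for each $Q\in\cd_N$ with $Q\cap A\ne\emptyset$, place a total mass $|Q|^s$ distributed as a constant multiple of Lebesgue measure on $Q$; assign zero mass to every other cube of $\cd_N$. \emph{Stage two:} iterate from generation $N-1$ down to generation $0$; for each $Q\in\cd_n$ in this order, if the mass $\lambda_N(Q)$ accumulated so far exceeds $|Q|^s$, rescale $\lambda_N|_Q$ by the factor $|Q|^s/\lambda_N(Q)$, and otherwise leave it unchanged. By construction $\lambda_N(Q)\le|Q|^s$ for \emph{every} dyadic cube $Q\subset Q_0$, and since any ball $B(x,r)$ intersects a bounded-by-$d$ number of dyadic cubes of side length comparable to $r$, this yields the ball bound $\lambda_N(B(x,r))\ll r^s$ with implicit constant depending only on $d$.

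For the lower bound on total mass I would run a stopping-time argument. For each $x\in A$, let $Q_x$ be the largest dyadic cube containing $x$ on which the rescaling step in stage two was actually triggered (if none, take the generation-$N$ cube through $x$, which by stage one already carries mass $|Q_x|^s$). Then $\lambda_N(Q_x)=|Q_x|^s$, and the maximal cubes in $\{Q_x:x\in A\}$ form a pairwise disjoint dyadic cover of $A$; inflating each to a ball of comparable radius produces a ball cover of $A$ whose $s$-sum is $\ll\lambda_N(Q_0)$, so by the definition of Hausdorff content $\lambda_N(\R^d)\gg \hc^s(A)$, the constant again depending only on $d$. The sequence $\{\lambda_N\}$ is supported in the compact set $Q_0$ with uniformly bounded total mass, so Banach--Alaoglu supplies a weak-$*$ subsequential limit $\lambda^*$; the upper bound on balls passes to the limit (by testing against compactly supported bumps dominating indicators of slightly larger open balls), the lower bound on total mass passes trivially, and since every $\lambda_N$ is supported in the $2^{-N}$-neighborhood of the compact set $A$, the limit $\lambda^*$ is supported on $A$. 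Normalizing $\lambda:=\lambda^*/\lambda^*(\R^d)$ produces the desired probability measure and converts the two bounds into the single inequality $\lambda(B(x,r))\ll r^s/\hc^s(A)$.

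The main obstacle is bookkeeping of the dimensional constants: one has to verify at both ends of the argument, namely when passing from the dyadic cube bound $\lambda_N(Q)\le|Q|^s$ to a bound on arbitrary Euclidean balls, and when converting a disjoint dyadic cover of $A$ back into a ball cover admissible for $\hc^s$, that the hidden multiplicative constants depend only on the ambient dimension $d$ and not on $s$, $A$, or $N$. A secondary technical point is ensuring that the support of the weak-$*$ limit actually lies in $A$, which is the reason for the preliminary reduction to compact $A$ and for choosing the first-stage measures to live on cubes meeting $A$.
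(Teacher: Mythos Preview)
The paper does not give its own proof of this lemma; it simply cites \cite[Theorem 8.8]{Mattilageometry1999}. Your proposal is the classical Frostman dyadic-cube construction, which is essentially the argument in the cited reference, and it is correct. One small point worth tightening: the reduction to compact $A$ via ``inner regularity of $\hc^s$'' is not quite straightforward since $\hc^s$ is not a measure; the clean route (as in Mattila) is to prove the result first for compact $A$ and then invoke the capacitability of $\hc^s$ for analytic sets, or alternatively to observe that the weak-$*$ limit of the $\lambda_N$ is automatically supported on $\overline{A}$ and then restrict attention to the closure from the outset.
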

Theorem~\ref{t:lip} below offers a relatively simple criterion for verifying that a $\limsup$ set has the large intersection property.
\begin{thm}[{\cite[Corollary 2.6]{HeMTPadv}}]\label{t:lip}
	Let $0< s\le\hdim X$. Let $\{E_n\}$ be a sequence of open sets in $X$. If for any $0<t<s$, there exists a constant $c=c(t)>0$ such that
	\[\limsup_{n\to\infty} \hc^t (E_n\cap  B)>c\mu(B)\asymp |B|^\delta\]
	holds for any ball $B\subset X$, then
	\[\limsup_{n\to\infty} E_n\in\scg^s(X).\]
\end{thm}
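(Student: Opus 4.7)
I would apply Theorem~\ref{t:lip} by establishing, for every ball $B\subset X$ and every $0<t<s$, a uniform lower bound $\hc^t\bigl((\limsup_n E_n)\cap B\bigr)\gg\mu(B)$. Such a bound follows from the Mass Distribution Principle (Proposition~\ref{p:MDP}) applied to a finite Borel measure $\lambda$ of local exponent $t$, supported on a Cantor-like subset of $(\limsup_n E_n)\cap B$ with total mass $\asymp\mu(B)$.

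Fix $B=B(y,\rho)$ and $t<s$. The Cantor set is built iteratively. At generation $k\ge 1$, starting from a parent ball $\mathcal B_{k-1}$ (with $\mathcal B_0=B$), I first invoke $\mu(\limsup F_n)=1$ together with Remark~\ref{r:quasi}(b) --- which asserts $F_n\subset B(x_n,c|F_n|)$ and $\mu(F_n)\asymp|F_n|^\delta$ --- to apply the Besicovitch covering theorem in $\R^d$ and extract a finite disjoint family of $F_N\subset\mathcal B_{k-1}$ of arbitrarily small diameter with total $\mu$-mass $\asymp\mu(\mathcal B_{k-1})$. Within each such $F_N$, the global hypothesis $\mu(\limsup B(x_n,r_n))=1$ yields $\mu(\limsup B_n\cap F_N)=\mu(F_N)$, and a second Besicovitch extraction produces a disjoint family of balls $B_m\subset F_N$ at a much smaller scale, each paired with its associated $E_m\subset B_m$. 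These $B_m$'s are the generation-$k$ children of $\mathcal B_{k-1}$, and the construction continues to generation $k+1$.

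The measure $\lambda$ is defined by applying Lemma~\ref{l:frostman} to each leaf $E_m$ --- using $\hc^s(E_m)\gg r_m^{\hdimu\mu}$ with $\hdimu\mu=\delta$ to furnish a Frostman probability measure of exponent $s$ on $E_m$ --- and averaging across children with weights proportional to $\mu(B_m)\asymp r_m^\delta$, normalised so that $\lambda(\text{total})\asymp\mu(B)$. The local-dimension bound $\lambda(B(y,r))\ll r^t$ splits into three regimes: inside a single leaf it comes from Frostman with exponent $s>t$; across a single generation it comes from the $\delta$-Ahlfors regularity of $\mu$ combined with the weighting; at scales comparable to $B$ itself it comes from the total-mass estimate. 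The scale ratio between successive generations must be tuned geometrically in terms of $s-t$ and $\delta-t$ so that these three bounds interpolate into the exponent $t$.

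The main obstacle I anticipate is ensuring that the support of $\lambda$ lies in $\limsup_n E_n$, and not merely in the union of the $B_m$'s across generations: a point of the Cantor support automatically sits in a nested sequence of child balls $B_m^{(k)}$, but for it also to belong to $E_m^{(k)}$ at every generation one must arrange the level-$(k+1)$ Besicovitch extraction to take place inside the Frostman support in $E_m^{(k)}$. This is the step where the full quasi-self-conformality of $\nu$ (beyond Remark~\ref{r:quasi}(b)), in the form of the compatibility $\nu^{(n)}\asymp\nu$ of the pushforward on $F_N$, is needed: it lets one transfer the global full-measure statement $\mu(\limsup B_n)=1$ into a positive-mass statement inside the Frostman supports that closes the induction.
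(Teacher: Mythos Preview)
Your proposal is not a proof of Theorem~\ref{t:lip}: that result is quoted from~\cite{HeMTPadv} as a preliminary and is not proved in this paper. What you have sketched is a proof of Theorem~\ref{t:MTP}, which \emph{applies} Theorem~\ref{t:lip}. I compare your sketch against the paper's proof of Theorem~\ref{t:MTP} in Section~\ref{s:MTP}.

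The paper does not build a Cantor set. Because Theorem~\ref{t:lip} only requires the content bound $\hc^t\bigl(\bigcup_{k\ge\ell}E_k\cap B\bigr)\gg\mu(B)$ for each fixed $\ell$ and $B$, a single finite two-level extraction suffices: first Corollary~\ref{c:kgb} produces a well-separated family $\cf_B=\{F_i\}$ inside $B$ with $\sum_i\mu(F_i)\gg\mu(B)$; then inside each $F_i$ a Besicovitch extraction gives a finite disjoint family $\cb(F_i)$ of balls $L_{n_z}\supset E_{n_z}$ with $n_z\ge\ell$. The set $A=\bigcup_i\bigcup_z E_{n_z}$ is manifestly contained in $\bigcup_{k\ge\ell}E_k\cap B$, so the ``main obstacle'' you anticipate --- forcing the Cantor support to lie in $\limsup_n E_n$ --- never arises. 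Your infinite iteration is self-imposed and is what creates the difficulty.

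More seriously, your outline misplaces the role of $\nu$. You write $\hc^s(E_m)\gg r_m^{\hdimu\mu}$ with $\hdimu\mu=\delta$, but condition~(3) of Theorem~\ref{t:MTP} reads $\hc^s(E_n)\gg r_n^{\hdimu\nu}$, and in the interesting cases $\hdimu\nu<\delta$. You also weight the child balls by $\mu(B_m)\asymp r_m^\delta$, whereas the paper weights the $L_{n_z}$ by $\nu(L_{n_z})$. This is the heart of the matter: the delicate regime is the intermediate range $r_{n_w}\le r<|F_i|$ (Case~3b in the paper), where one needs a H\"older bound $\nu(B(w,r)\cap F_i)/\nu(F_i)\ll (r/|F_i|)^{\hdimu\nu-\varepsilon}$. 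The paper obtains this by centring each $L_{n_z}$ at a point $z\in f_i^{-1}(E_\nu^{N,\varepsilon})$ --- the pull-back, via the quasi-self-conformal map $f_i$, of the set where $\nu$ has lower local dimension at least $\hdimu\nu-\varepsilon$ --- and this is exactly where both clauses of Definition~\ref{d:quasiselfconf} enter. Your sketch never produces such a bound, and weighting by $\mu$ alone would give exponent $\delta$ at intermediate scales, which is too weak to reach $t$ close to $s\le\hdimu\nu$.
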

\begin{rem}
	Although the Falconer's original definition of large intersection property (see Definition \ref{d:LIP}) requires that
	\[\hc^t (E\cap  B)\gg \hc^t (B)\asymp|B|^t\quad\text{holds for any ball $B\subset X$},\]
it was observed by the author in \cite{HeMTPadv} that this condition can be significantly weakened, as stated in the above theorem. It is also worth noting that several alternative definitions of large intersection properties appear in that paper, some of which may be used to derive Hausdorff measures. However, we will not pursue these here, since our focus is on Hausdorff dimension.
\end{rem}
The following covering result, due to Besicovitch, will be used to efficiently select disjoint subfamilies of balls covering a given set.
\begin{thm}[{Besicovitch covering Theorem \cite[Theorem 2.7]{Mattilageometry1999}}] There is a positive integer $Q_d$ depending only on the dimension $d$ with the following property. Let $E\subset \R^d$ be a bounded set, and let $\cb$ be a family of balls such that each point of $E$ is the centre of some ball of $\cb$. There are families $\cb_1,\dots,\cb_{Q_d}\subset\cb$ covering $E$ such that each $\cb_k$ is disjoint, that is,
	\[E\subset \bigcup_{1\le k\le Q_d}\bigcup_{B\in\cb_k}B\]
	and
	\[B\cap B'=\emptyset\quad\text{for $B,B'\in\cb_k$ with $B\ne B'$}.\]
\end{thm}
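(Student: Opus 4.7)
The plan is to prove this in three stages: a greedy selection that extracts a countable subfamily $\{B_j\}\subset\cb$ covering $A$, a purely Euclidean bounded-overlap lemma that caps the number of earlier balls $B_j$ ($j<k$) meeting any given $B_k$, and a greedy graph-colouring step that converts bounded back-degree into a partition into $Q_d$ disjoint subfamilies.

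First I would carry out the selection. By writing $A$ as a countable union of bounded pieces (e.g.\ $A\cap\{2^k\le|x|<2^{k+1}\}$) and treating each separately, I may assume $A$ is bounded; by discarding one oversized ball if needed I may also assume $R:=\sup_{B\in\cb}r(B)<\infty$. Inductively, having chosen $B_1,\dots,B_{j-1}$, set $A_j:=A\setminus\bigcup_{i<j}B_i$; if $A_j\ne\emptyset$, let $R_j:=\sup\{r(B):B\in\cb,\,\mathrm{centre}(B)\in A_j\}$ and pick $B_j=B(x_j,r_j)\in\cb$ with $x_j\in A_j$ and $r_j>R_j/2$. Two facts follow immediately: $|x_j-x_k|\ge r_j$ for $j<k$ (because $x_k\notin B_j$), and $r_k<2r_j$ for $j<k$ (because $x_k\in A_j$ forces $r_k\le R_j$). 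These force the shrunken balls $B(x_j,r_j/3)$ to be pairwise disjoint, so $\sum_j r_j^d$ is controlled by the volume of a bounded neighbourhood of $A$ and is finite. If some $x\in A$ were never covered, the ball of $\cb$ centred at $x$ would keep $r_j$ bounded below by a positive constant, contradicting $r_j\to 0$; hence $A\subset\bigcup_j B_j$.

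The heart of the proof is the Euclidean overlap lemma: there exists $Q_d$, depending only on $d$, such that the back-neighbour set $N(k):=\{j<k:B_j\cap B_k\ne\emptyset\}$ has at most $Q_d-1$ elements for every $k$. For $j\in N(k)$ the selection rules give $r_j\ge r_k/2$ and $r_j\le|x_j-x_k|<r_j+r_k\le 3r_j$, so each $x_j$ lies in an annular region around $x_k$ of bounded relative thickness. The technical ingredient is an angle-separation estimate: for any two indices $j_1<j_2$ in $N(k)$, the non-containment $x_{j_2}\notin B_{j_1}$ together with the law of cosines (after a case split on which of $r_{j_1},r_{j_2}$ dominates) gives a lower bound $\angle x_{j_1}x_k x_{j_2}\ge\theta_d>0$ on the angle subtended at $x_k$. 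The directions $(x_j-x_k)/|x_j-x_k|$ for $j\in N(k)$ then form a $\theta_d$-separated subset of $S^{d-1}$, and a standard volume-packing estimate on the sphere caps their number by a dimensional constant $Q_d-1$.

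The final stage is a greedy colouring. Build the intersection graph $\cg$ on $\{B_j\}$ whose edges join pairs having non-empty intersection. The previous step says each $B_k$ has back-degree at most $Q_d-1$; colouring in index order and assigning each $B_k$ any colour not used on its earlier neighbours produces a proper $Q_d$-colouring, and the colour classes $\cb_1,\dots,\cb_{Q_d}$ are the required disjoint subfamilies covering $A$. The main obstacle is the angle-separation lemma in the third paragraph: it is the only step that makes essential use of the Euclidean geometry of $\R^d$, and it is the source of the dimension-dependence in $Q_d$. All other steps — the greedy selection, the disjoint-thirds covering check, and the greedy colouring — are formal once this geometric input is available.
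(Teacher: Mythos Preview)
The paper does not prove this theorem; it is quoted without proof as a standard result from Mattila's textbook. Your three-stage outline --- greedy near-maximal-radius selection, bounded back-degree via an angle-separation/packing argument on $S^{d-1}$, then greedy colouring --- is precisely the classical proof found in that reference, so there is nothing in the paper to compare against.

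Two small remarks on the sketch itself. First, the reduction ``by writing $A$ as a countable union of bounded pieces'' is unnecessary, since the hypothesis already assumes $A$ is bounded. Second, the angle-separation step is a bit more delicate than your summary indicates: for $j\in N(k)$ the selection rules give only $r_j>r_k/2$ from below, with no a priori upper bound on $r_j/r_k$, so the centres $x_j$ do not all lie in a single annulus of bounded relative thickness around $x_k$. The standard remedy is either to split $N(k)$ into ``comparable'' indices ($r_j\le Mr_k$), handled by packing the disjoint shrunken balls $B(x_j,r_j/3)$ inside a ball of radius $\asymp r_k$, and ``large'' indices ($r_j>Mr_k$), handled by a separate half-space/annulus argument, or to push the law-of-cosines computation through carefully enough to obtain a uniform angle lower bound regardless of the ratio. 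Your mention of a case split suggests you are aware of this; just make sure the written version carries it out explicitly.
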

The next lemma allows us to extract well-separated subcollections from a sequence of shrinking balls while retaining a definite portion of total measure.
\begin{lem}[{\cite[Lemma 5]{AllenBakerMTPselect}}]\label{l:kgb}
	Let $\{B(x_n,r_n)\}$ be a sequence of balls in $X\subset\R^d$ such that
	\[\mu\bigg(\limsup_{n\to\infty}B(x_n,r_n)\bigg)=1 \qaq \text{$r_n\to 0$ as $n\to\infty$.}\]
	Then, for any ball $B$ in $X$, there exists a finite collection
	\[K_{B}\subset\{B(x_n,r_n)\}\]
	satisfying the following properties:
	\begin{enumerate}[\upshape(1)]
		\item $B(x_n,r_n)\subset B$ for all $B(x_n,r_n)\in K_B$;
		\item If $B(x_n,r_n), B(x_m,r_m)\in K_B$ are distinct, then $B(x_n,3r_n)\cap B(x_m,3r_m)=\emptyset$;
		\item $\mu\big(\bigcup_{B(x_n,r_n)\in K_B}B(x_n,r_n)\big)\gg\mu(B),$ where the implied constant does not depend on  $B$.
	\end{enumerate}
\end{lem}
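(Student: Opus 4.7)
The argument follows the lines of the original KGB lemma of Beresnevich--Velani and rests on two ingredients: the (implicit) standing hypothesis $\mu(\limsup_n B(x_n,r_n)) = 1$ inherited from Theorem~\ref{t:MTP}(2), and $\delta$-Ahlfors regularity of $\mu$. Fix a ball $B = B(y_0, R) \subset X$, write $\tilde B = B(y_0, R/2)$ so that $\mu(\tilde B) \asymp \mu(B)$, and choose $N$ large enough that $r_n < R/10$ for all $n \geq N$. The triangle inequality then gives: whenever $n \geq N$ and $B(x_n, r_n) \cap \tilde B \neq \emptyset$, one has
\[
B(x_n, 3r_n) \subset B(y_0, R/2 + 3r_n) \subset B.
\]

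By the $\limsup$ full-measure hypothesis, $\mu$-almost every point of $\tilde B$ lies in $B(x_n, r_n)$ for some $n \geq N$, and the observation above shows that the relevant ball belongs to
\[
\cd := \bigl\{B(x_n, r_n) : n \geq N,\; B(x_n, 3r_n) \subset B\bigr\}.
\]
Thus $\mu\bigl(\bigcup_{D \in \cd} D\bigr) \geq \mu(\tilde B) \asymp \mu(B)$, and since $\cd$ is countable, countable additivity lets me extract a finite subcollection $\{B(x_{n_i}, r_{n_i})\}_{i=1}^M \subset \cd$ with $\mu\bigl(\bigcup_{i=1}^M B(x_{n_i}, r_{n_i})\bigr) \geq \tfrac12 \mu(\tilde B)$.

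It remains to thin this finite subcollection to one whose $3r$-inflations are pairwise disjoint. Apply a $5r$-covering (Vitali) argument to the inflated balls $\{B(x_{n_i}, 3r_{n_i})\}_{i=1}^M$: greedy selection by decreasing radius produces $J \subset \{1,\dots,M\}$ such that $\{B(x_{n_j}, 3r_{n_j})\}_{j\in J}$ is pairwise disjoint and
\[
\bigcup_{i=1}^M B(x_{n_i}, 3r_{n_i}) \subset \bigcup_{j\in J} B(x_{n_j}, 15 r_{n_j}).
\]
By Ahlfors regularity $\mu(B(x_{n_j}, 15 r_{n_j})) \leq C \mu(B(x_{n_j}, r_{n_j}))$, and so
\[
\sum_{j \in J} \mu(B(x_{n_j}, r_{n_j})) \geq C^{-1}\mu\Bigl(\bigcup_{i=1}^M B(x_{n_i}, r_{n_i})\Bigr) \asymp \mu(B).
\]
Disjointness of the $3r$-balls (and hence of the $r$-balls) turns the left-hand side into $\mu\bigl(\bigcup_{j\in J} B(x_{n_j}, r_{n_j})\bigr)$. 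Setting $K_B := \{B(x_{n_j}, r_{n_j}) : j \in J\}$, properties (1) and (2) hold by construction and (3) is the displayed estimate.

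The only mildly subtle point is the last step. A direct application of the Besicovitch statement quoted in the excerpt covers only the \emph{centers} of the balls rather than the full union, so if the radii $r_{n_i}$ vary wildly one cannot simply read off the required measure bound from Besicovitch; this is why I invoke the $5r$-covering above. Equivalently, one can dyadically partition the radii into bands $[2^{-k-1}, 2^{-k}]$, apply Besicovitch within the band carrying the largest share of $\mu\bigl(\bigcup_i B(x_{n_i}, r_{n_i})\bigr)$, and use Ahlfors regularity to absorb the $\log$-free pigeonhole loss.
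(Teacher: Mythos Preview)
The paper does not supply its own proof of this lemma; it is quoted directly from Beresnevich--Velani (the citation in the lemma header). Your argument is correct and is essentially the standard proof: restrict to a half-radius sub-ball, use the full-measure $\limsup$ hypothesis (which, as you rightly note, is implicit in the statement and indispensable), pass to a finite subfamily by continuity of measure, and then run a $5r$-Vitali selection on the $3r$-inflations combined with Ahlfors regularity to control the loss. One tiny arithmetic slip: from $B(x_n,r_n)\cap\tilde B\neq\emptyset$ you only get $|x_n-y_0|<R/2+r_n$, hence $B(x_n,3r_n)\subset B(y_0,R/2+4r_n)$ rather than $R/2+3r_n$; with $r_n<R/10$ this still lands inside $B$, so nothing is affected. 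Your closing remark about why the $5r$-covering is preferable to Besicovitch here matches the paper's own commentary that Lemma~\ref{l:kgb} really lives in the doubling world rather than the Besicovitch world.
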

Here, we highlight the difference between Besicovitch covering theorem and Lemma \ref{l:kgb}. Besicovitch covering theorem applies to arbitrary Borel measures but is restricted to Euclidean spaces, whereas Lemma \ref{l:kgb} can be extended to general metric spaces but requires the measure to be doubling. In the sequel, when it is necessary to extract a disjoint subcollection from a sequence of shrinking balls, we will use Lemma \ref{l:kgb} for the ambient measure $\mu$, and the Besicovitch covering theorem for the reference measure $\nu$.

Note that as the collection $\{F_n\}$ of closed sets in Theorem \ref{t:MTP} are not necessarily balls, the following variant of Lemma \ref{l:kgb} is needed. Before moving on, we give some simple properties of the map $f_n$ and the set $F_n$ defined in Definition \ref{d:quasiselfconf} that will be used soon.
\begin{prop}\label{p:property of conformal map f}
		Let $F$ be a closed subset of $X$ and $f:F\to X$ be a bijection. Suppose that there exists a constant $C \ge 1$ such that
		\begin{equation}\label{eq:conformal f}
			C^{-1} \dfrac{|x - y|}{|F|} \le |f(x) - f(y)| \le C \dfrac{|x - y|}{|F|}\quad \text{for all $x, y \in F$},
		\end{equation}
	then the following properties hold:
	\begin{enumerate}[\upshape(1)]
		\item For any $x\in F$ and $0<r<|F|$,
		\[B(f(x),C^{-1}r/|F|)\subset f(B(x,r)\cap F)\subset B(f(x),C r/|F|).\]
		\item there exists a constant $c>0$ depends on $X$, $C$ and $\mu$ only such that
		\[F\subset B(x,c|F|)\qaq c^{-1}|F|^\delta\le \mu(F)\le c|F|^\delta,\]
		where $x\in F$.
	\end{enumerate}
\end{prop}
\begin{proof}
	(1) Let $x\in F$ and $0<r<|F|$. For any $y\in B(x,r)\cap F$, we have
	\[|f(x)-f(y)|\stackrel{\eqref{eq:conformal f}}{\le}Cr/|F|,\]
	and therefore $f(B(x,r)\cap F)\subset B(f(x),C r/|F|)$.

	For the left inclusion in item (1), take an arbitrary point $z$ in $B(f(x),C^{-1}r/|F|)$. Since $f$ is a bijection, there exists $y\in F$ such that $f(y)=z$. It follows that
	\[|x-y|\stackrel{\eqref{eq:conformal f}}{\le} C|F|\cdot|f(x)-f(y)|=C|F|\cdot|f(x)-z|<r.\]
	Therefore, $y\in B(x,r)\cap F$, and consequently, $z=f(y)\in f(B(x,r)\cap F)$.

	(2) Suppose, for the sake of contradiction, that $	F \nsubseteq B(x, 2C\,|X|\cdot|F|).$
	Then, for any $y \in F \setminus B(x, 2C|X|\cdot|F|)$, we would have
	\[
	|f(x) - f(y)| \stackrel{\eqref{eq:conformal f}}{\ge} C^{-1} \frac{|x - y|}{|F|} \ge 2|X|,
	\]
	which is a contradiction. Hence,
	\[
	F \subset B(x, 2C\,|X|\cdot|F|).
	\]
	It then follows that
	\[
	\mu(F) \le \mu\big(B(x, 2C\,|X|\cdot|F|)\big) \ll |F|^\delta,
	\]
	where the implied constant depends only on \(X\), \(C\), and \(\mu\).

	Next, the second inequality in \eqref{eq:conformal f} implies that $f$ is Lipschitz with Lipschitz constant at most $C|F|^{-1}$. Hence,
	\[1=\mu(X)\asymp\hm^\delta(X)=\hm^\delta(f(F))\le (C|F|^{-1})^\delta\hm^\delta(F),\]
	where the last inequality follows from \cite[Proposition 3.1]{Falconer_book}. Since the $\delta$-Ahlfors regular measure $\mu$ is comparable to $\hm^s|_X$, we have
	\[\mu(F)\asymp\hm^\delta|_X(F)=\hm^\delta(F)\ge C^{-\delta}|F|^\delta,\]
	as desired.
\end{proof}

Proposition \ref{p:property of conformal map f} together with Lemma \ref{l:kgb} enables us to obtain the following result.
\begin{lem}\label{l:kgbf}
	Let $\cf=\{F_n\}$ be a sequence of closed sets in $X$ such that the following properties hold:
	\begin{enumerate}[\upshape(a)]
		\item there exists a constant $C \ge 1$ such that for every $F_n \in \mathcal{F}$, there exists a bijection $f_n: F_n \to X$ satisfying
		\[C^{-1} \dfrac{|x - y|}{|F_n|} \le |f_n(x) - f_n(y)| \le C \dfrac{|x - y|}{|F_n|} \text{\quad for all } x, y \in F_n;\]
		\item $\mu\Big(\limsup\limits_{n\to\infty}  F_n\Big)=1$ and $\lim\limits_{n\to\infty}|F_n|=0.$
	\end{enumerate}
	Then, for any ball $B$ in $X$, there exists a finite collection
	\[\cf_B\subset\cf\]
	satisfying the following properties:
	\begin{enumerate}[\upshape(1)]
		\item $F_n\subset B$ for all $F_n\in \cf_B$;
		\item If $F_n$ and $F_m$ are distinct, then $\dist(F_n,F_m)\ge\max\{|F_n|,|F_m|\}$, where the distance between sets is defined by
		\[
		\dist(F_n, F_m) := \inf\{ d(x,y) : x \in F_n,\ y \in F_m \};
		\]
		\item $\mu\big(\bigcup_{F_n\in \cf_B}F_n\big)\gg\mu(B),$ where the implied constant does not depend on $B$.
	\end{enumerate}
\end{lem}
\begin{proof}
	If follows from Proposition \ref{p:property of conformal map f} that item (a) of the lemma implies that
	\begin{equation}\label{eq:fnsize}
		F_n\subset B(x_n,c|F_n|)\qaq c^{-1}|F_n|^\delta\le \mu(F_n)\le c|F_n|^\delta,
	\end{equation}
	where $x_n\in F_n$ and $c\ge 1$ is a constant depend only on $X$, $C$ and $\mu$. By the full measure assumption in item (b), it is clear that
	\[\mu\Big(\limsup_{n\to\infty} B(x_n,c|F_n|)\Big)=1.\]
	This together with another assmuption $\lim_{n\to\infty}|F_n|=0$ in item (b), we have that Lemma \ref{l:kgb} is applicable to the sequence of balls $\{B(x_n,c|F_n|)\}$. By that lemma, we obtain a finite collection $K_B$ satisfying items (1)--(3) in Lemma \ref{l:kgb}. Let
	\[\cf_B=\{F_n: B(x_n,c|F_n|)\in K_B\}.\]
	By the first inclusion of \eqref{eq:fnsize}, item (1) in the lemma follows immediately from Lemma \ref{l:kgb} (1) since
	\[F_n\subset B(x_n,c|F_n|)\subset B.\]

	For item (2), suppose that $F_n,F_m\in \cf_B$ are distinct. Then, by definition, the same is true for $B(x_n,c|F_n|), B(x_m,c|F_m|)\in K_B$. It follows from Lemma \ref{l:kgb} (2) that
	\[B(x_n,3c|F_n|)\cap B(x_m,3c|F_m|)=\emptyset.\]
	Therefore,
	\[\dist\big(B(x_n,c|F_n|), B(x_m,c|F_m|)\big)\ge \max\{c|F_n|,c|F_m|\}\ge \max\{|F_n|,|F_m|\}. \]
	Consequently,
	\[\dist(F_n,F_m)\ge \dist\big(B(x_n,c|F_n|), B(x_m,c|F_m|)\big)\ge \max\{|F_n|,|F_m|\}.\]

	For item (3), since $F_n$ are pairwise disjoint, we have
	\[\begin{split}
		\mu\bigg(\bigcup_{F_n\in \cf_B}F_n\bigg)&=\sum_{F_n\in\cf_B}\mu(F_n)\stackrel{\eqref{eq:fnsize}}{\asymp} \sum_{F_n\in\cf_B}|F_n|^\delta\asymp \sum_{F_n\in\cf_B}\mu\big(B(x_n,c|F_n|)\big)\\
		&=\mu\bigg(\bigcup_{B(x_n,c|F_n|)\in K_B}B(x_n,c|F_n|)\bigg)\stackrel{\text{Lemma \ref{l:kgb} (3)}}{\gg}\mu(B).\qedhere
	\end{split}\]
\end{proof}
\section{Proof of Theorem \ref{t:MTP}}\label{s:MTP}
Let $\{E_n\}$ be as given in Theorem \ref{t:MTP}. In this section, our goal is to establish that $\limsup E_n$ has the large intersection property. Specifically, we will show that for any $0 < t < s$,
\begin{equation}\label{eq:content bound}
	\limsup_{\ell\to\infty}\hc^t\bigg(\bigcup_{k= \ell}^\infty E_k\cap B\bigg)\gg \mu(B) \quad\text{holds for any ball $B\subset X$},
\end{equation}
where the implied constant is independent of the ball $B$.
Once this is established, Theorem~\ref{t:lip} yields
\[\limsup_{\ell\to\infty}\bigg(\bigcup_{k= \ell}^\infty E_k\bigg)=\limsup_{n\to\infty} E_n\in\scg^s(X),\]
thereby completing the proof of the large intersection property for the set $\limsup E_n$.
\begin{rem}
	Although our proof relies on the original `balls to open sets' mass transference principle (that is, Theorem \ref{t:lip} or Theorem \ref{t:MTP} in the case $\mu=\nu$), it does not immediately imply Theorem \ref{t:MTP} itself, as can be seen from our proof of that theorem. The reason we employ the original `balls to open sets' mass transference principle is that, as observed in our previous paper \cite{HeMTPadv}, this principle has the potential to serve as a general framework for determining the Hausdorff measure/dimension of $\limsup$ sets in a relatively simple manner. Quite surprisingly, this principle continues to be applicable in the present new setting.
\end{rem}

To proceed, fix $0 < t < s$, $\ell \ge 1$, and a ball $B \subset X$. The remainder of this section is devoted to establishing the lower bound
\begin{equation}\label{eq:goal}
	\hc^t\bigg(\bigcup_{k= \ell}^\infty E_k\cap B\bigg)\gg \mu(B).
\end{equation}
\subsection{Construction of a subset of  $\bigcup_{k= \ell}^\infty E_k\cap B$} Our approach to constructing the desired subset is motivated by \cite{DaviaudMTPJMAA}, but the overall strategy we adopt to establish the lower bound of the Hausdorff dimension of $\limsup E_n$ is different.

Let $\nu$ be the reference measure stated in Theorem \ref{t:MTP}. Let $\varepsilon=s-t>0$. To make effective use of the local behavior of the measure $\nu$, we consider the set of points where the lower local dimension exceeds a certain threshold. By the definition of the lower Hausdorff dimension $\hdimu\nu$, the set
\[E_{\nu}^\varepsilon:=\{x\in X:\underline{D}(\nu,x)>\hdimu\nu-\varepsilon\}\]
has positive $\nu$-measure. Let us denote $\gamma_\varepsilon := \nu(E_\nu^\varepsilon)$ for convenience. To obtain a uniform estimate on the measure of small balls, we consider the sets
\[E_{\nu}^{n,\varepsilon}:=\{x\in X: \text{$\forall$ $0<r<1/n$, $\nu(B(x,r))\le r^{\hdimu\nu-\varepsilon}$}\}.\]
By the definition of $\underline{D}(\nu,x)$ (see \eqref{eq:definition of local dimension}), we have
\[E_{\nu}^\varepsilon=\bigcup_{n= 1}^\infty E_{\nu}^{n,\varepsilon},\]
and clearly the sequence $\{E_{\nu}^{n,\varepsilon}\}$ is increasing in $n$. Therefore, by the continuity of measure from below, there exists an integer $N = N(\varepsilon)$ such that
\begin{equation}\label{eq:gammaeps}
	\nu(E_{\nu}^{N,\varepsilon})\ge \gamma_\varepsilon/2.
\end{equation}

For the given ball $B$, let $\cf_B\subset \cf$ be the finite subcollection of closed sets obtained from Lemma \ref{l:kgbf}.
Recall from Definition \ref{d:quasiselfconf} that for any $F_i\subset\cf$,
\begin{equation}\label{eq:quasimu}
	\nu/C\le \nu^{(i)}=\frac{\nu\circ f_i^{-1}}{\nu(F_i)}\le C\nu,
\end{equation}
where $f_i:F_i\to X$ is a bijection.
Let $F_i\in\cf_B$. Since $f_i$ is a bijection, we can estimate the measure of the intersection $B(x,\rho)\cap F_i$ for $x\in F_i$ and $0<\rho<|F_i|$ as follows:
\begin{align}
	\nu(B(x,\rho)\cap F_i)&=\nu\big(f_i^{-1}(f_i(B(x,\rho)\cap F_i))\big)=\nu(F_i)\nu^{(i)}\big(f_i(B(x,\rho)\cap F_i)\big)\notag\\
	&\stackrel{\eqref{eq:quasimu}}{\le} C\nu(F_i)\nu\big(f_i(B(x,\rho)\cap F_i)\big)\notag\\
	&\stackrel{\text{Proposition \ref{p:property of conformal map f} (1)}}{\le} C\nu(F_i)\nu\big(B(f_i(x),C\rho/|F_i|)\big).\label{eq:quasiupper}
\end{align}

Let $x\in f_i^{-1}(E_\nu^{N,\varepsilon})$. Then, $x\in F_i$ and $f_i(x)\in E_\nu^{N,\varepsilon}$. By the  definition of $E_\nu^{N,\varepsilon}$, for any $0<r<1/N$,
\begin{equation}\label{eq:holder}
	\nu(B(f_i(x),r))\le r^{\hdimu\nu-\varepsilon}.
\end{equation}
Then, for any $0<\rho<|F_i|/(CN) $ (or equivalently $0<C \rho/|F_i|<1/N$),
\[\begin{split}
	\nu(B(x, \rho)\cap F_i)&\stackrel{\eqref{eq:quasiupper}}{\le} C\nu(F_i)\nu\bigg(B\bigg(f_i(x),\frac{C \rho}{|F_i|}\bigg)\bigg)\stackrel{\eqref{eq:holder}}{\le} C\nu(F_i)\cdot \bigg(\frac{C \rho}{|F_i|}\bigg)^{\hdimu\nu-\varepsilon}\\
	&\le C^{d+1}\nu(F_i)\cdot \bigg(\frac{\rho}{|F_i|}\bigg)^{\hdimu\nu-\varepsilon}.
\end{split}\]
Equivalently, for any such $\rho$,
\[\frac{\nu(B(x,\rho)\cap F_i)}{\nu(F_i)}\le C^{d+1}\bigg(\frac{\rho}{|F_i|}\bigg)^{\hdimu\nu-\varepsilon}.\]
Therefore, we conclude that
\[\begin{split}
	&f_i^{-1}(E_\nu^{N,\varepsilon})\\
	=&f_i^{-1}\big(\{x\in X: \text{$\forall$ $0<r<1/N$, $\nu(B(x,r))\le r^{\hdimu\nu-\varepsilon}$}\}\big)\\
	\subset&\bigg\{x\in F_i: \text{$\forall$ $0<\rho<\frac{|F_i|}{CN}$, $\frac{\nu(B(x,\rho)\cap F_i)}{\nu(F_i)}\le C^{d+1}\bigg(\frac{\rho}{|F_i|}\bigg)^{\hdimu\nu-\varepsilon}$}\bigg\}.
\end{split}\]
Define
\[\begin{split}
	E_{\nu,F_i}^{N,\varepsilon}:=&\limsup_{n\to\infty}B(x_n,r_n)\cap \\
	&\bigg\{x\in F_i: \text{$\forall$ $0<\rho<\frac{|F_i|}{CN}$, $\frac{\nu(B(x,\rho)\cap F_i)}{\nu(F_i)}\le C^{d+1}\bigg(\frac{\rho}{|F_i|}\bigg)^{\hdimu\nu-\varepsilon}$}\bigg\}.
\end{split}\]
Since $\nu(\limsup B(x_n,r_n))=1$ (by Theorem \ref{t:MTP} (2)), we have $f_i^{-1}(E_\nu^{N,\varepsilon})= E_{\nu,F_i}^{N,\varepsilon}$ except for a set of zero $\nu$-measure. Therefore,
\begin{equation}\label{eq:EvFi}
	\nu(E_{\nu,F_i}^{N,\varepsilon})=\nu\big(f_i^{-1}(E_\nu^{N,\varepsilon})\big)=\nu(F_i)\nu^{(i)}(E_\nu^{N,\varepsilon})\stackrel{\eqref{eq:quasimu}}{\ge} \frac{\nu(F_i)\nu(E_\nu^{N,\varepsilon})}{C}\stackrel{\eqref{eq:gammaeps}}{\ge} \frac{\gamma_\varepsilon\nu(F_i)}{2C}.
\end{equation}

For any $z\in E_{\nu,F_i}^{N,\varepsilon}$, there exists infinitely many $n$ such that $z\in B(x_n,r_n)$. Choose an integer $n_z\ge \ell$ large enough so that
\begin{equation}\label{eq:condnz}
	z\in B(x_{n_z},r_{n_z})\subset B\qaq16r_{n_z}\le |F_i|/(C N).
\end{equation}
The above inclusion $B(x_{n_z},r_{n_z})\subset B$ is possible since $E_{\nu,F_i}^{N,\varepsilon}\subset F_i\subset B$ and $B$ is open.
Set $L_{n_z}:=B(z,5r_{n_z})$. Recall from Theorem \ref{t:MTP} that $E_n\subset B_n$. Then, we have
\begin{equation}\label{eq:Enz}
	E_{n_z}\subset B(x_{n_z},r_{n_z})\subset L_{n_z}.
\end{equation}
Thus, the collection of balls $\{L_{n_z}:z\in E_{\nu,F_i}^{N,\varepsilon}\}$ forms a covering of $E_{\nu,F_i}^{N,\varepsilon}$. By Besicovitch covering theorem, one can extract from this cover a finite number (at most $Q_d$) of disjoint subcollections $\cb_k(F_i)$ for $1\le k\le Q_d$, such that:
\begin{enumerate}[(1)]
	\item Each collection $ \cb_k(F_i) $ consists of pairwise disjoint balls: for any distinct $ L_{n_z}, L_{n_w} \in \cb_k(F_i) $, it holds that $ L_{n_z} \cap L_{n_w} = \emptyset $;
	\item The union of these collections covers the entire set:
	\[
	E_{\nu,F_i}^{N,\varepsilon} \subset \bigcup_{1 \le k \le Q_d} \bigcup_{L_{n_z} \in \cb_k(F_i)} L_{n_z}.
	\]
\end{enumerate}
By item (2) above, there exists some $ 1 \le k_i \le Q_d $ such that the corresponding collection $ \cb_{k_i}(F_i) $ satisfies
\[\nu\bigg(\bigcup_{L_{n_z}\in\cb_{k_i}(F_i)}L_{n_z}\bigg)\ge \frac{\nu(E_{\nu,F_i}^{N,\varepsilon})}{Q_d}\stackrel{\eqref{eq:EvFi}}{\ge}\frac{\gamma_\varepsilon\nu(F_i)}{2CQ_d}.\]
Since the balls in $ \cb_{k_i}(F_i) $ are pairwise disjoint (see item (1) above), we may further extract a finite subcollection $ \cb(F_i) \subset \cb_{k_i}(F_i) $ such that
\begin{equation}\label{eq:mu>}
	\nu\bigg(\bigcup_{L_{n_z}\in\cb(F_i)}L_{n_z}\bigg)\ge \nu\bigg(\bigcup_{L_{n_z}\in\cb_{k_i}(F_i)}L_{n_z}\bigg)\bigg/2\ge \frac{\gamma_\varepsilon\nu(F_i)}{4CQ_d}.
\end{equation}
Note that from \eqref{eq:Enz}, each $ E_{n_z} \subset L_{n_z} $, so the union
\begin{equation}\label{eq:A}
	A:=\bigcup_{F_i\subset \cf_B}\bigcup_{L_{n_z}\in\cb(F_i)}E_{n_z}\subset \bigcup_{k= \ell}^\infty E_k\cap B
\end{equation}
is a subset of the relevant tail of the $ \limsup $ set intersected with the ball $ B $.

In the next subsection, we will construct a probability measure supported on the set $ A $, and show that $\hc^t(A) \gg \mu(B)$. This will immediately yield the desired lower bound in \eqref{eq:goal}, completing the proof of the large intersection property. Before moving to this task, we summarize several geometric and measure-theoretic properties of $ A $ established so far. These will be instrumental in the measure construction and content estimates that follow.

\begin{lem}\label{l:geoA}
	Let \( A \) be the set defined in \eqref{eq:A}. Then the following properties hold:
	\begin{enumerate}[\upshape(1)]
		\item We have the lower bound
		\begin{equation}\label{eq:m>}
			\sum_{F_i \in \cf_B} \mu(F_i) \gg \mu(B).
		\end{equation}
		Furthermore, for any two distinct sets \( F_i, F_j \in \cf_B \),
		\[
		\dist(F_i, F_j) \ge \max\{|F_i|, |F_j|\}.
		\]

		\item For each \( F_i \in \cf_B \), we have
		\begin{equation}\label{eq:mu>reformulation}
			\nu\bigg(\bigcup_{L_{n_z}\in\cb(F_i)}L_{n_z}\bigg)\gg\nu(F_i),
		\end{equation}
		where $L_{n_z}=B(z,5r_{n_z})$ is a ball with center $z\in E_{\nu,F_i}^{N,\varepsilon}$.
		Moreover, for any two distinct balls \( L_{n_z}, L_{n_w} \in \cb(F_i) \), we have
		\begin{equation}\label{eq:distE}
			L_{n_z} \cap L_{n_w} = \emptyset, \quad \text{and} \quad \dist(E_{n_z}, E_{n_w}) \ge \max\{r_{n_z}, r_{n_w}\}.
		\end{equation}
	\end{enumerate}
\end{lem}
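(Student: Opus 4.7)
The lemma collects together, in a single place, properties of the set $A$ that were already established during its construction, so the plan is to derive each item by tracing it back to the relevant step in the construction, supplying a short triangle-inequality argument only where one is actually needed.

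For item (1), the sum estimate follows from Corollary \ref{c:kgb}(3), which yields $\mu\bigl(\bigcup_{F_i\in\cf_B}F_i\bigr)\ge c'\mu(B)$, combined with countable subadditivity of $\mu$ to replace the measure of the union by the sum of measures. The separation property $\dist(F_i,F_j)\ge\max\{|F_i|,|F_j|\}$ is exactly Corollary \ref{c:kgb}(2) applied to the subcollection $\cf_B$, so nothing further is required.

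For item (2), the $\nu$-mass bound is a direct reformulation of \eqref{eq:mu>}, with implicit constant $\gamma_\varepsilon/(4CQ_d)$. The disjointness of distinct $L_{n_z},L_{n_w}\in\cb(F_i)$ is inherited from the containment $\cb(F_i)\subset\cb_{k_i}(F_i)$, together with property (1) of the Besicovitch-type subfamily $\cb_{k_i}(F_i)$. For the separation bound on the $E_{n_z}$, the plan is a short triangle-inequality computation: disjointness of the open balls $L_{n_z}=B(z,5r_{n_z})$ and $L_{n_w}=B(w,5r_{n_w})$ gives $|z-w|\ge 5(r_{n_z}+r_{n_w})$, while the inclusions $z\in B(x_{n_z},r_{n_z})$ and $E_{n_z}\subset B(x_{n_z},r_{n_z})$ (see \eqref{eq:Enz}) imply $E_{n_z}\subset B(z,2r_{n_z})$, and likewise $E_{n_w}\subset B(w,2r_{n_w})$. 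Consequently, for any $p\in E_{n_z}$ and $q\in E_{n_w}$,
\[
|p-q|\;\ge\;|z-w|-|p-z|-|q-w|\;\ge\;3(r_{n_z}+r_{n_w})\;\ge\;\max\{r_{n_z},r_{n_w}\}.
\]

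No substantive obstacle is anticipated; the only step requiring a little care is this last triangle-inequality calculation. The factor $5$ built into the definition of $L_{n_z}$ is precisely what ensures that the separation gained from disjointness of the enlarged balls survives when one restricts attention to the smaller sets $E_{n_z}$, and in fact leaves a comfortable margin.
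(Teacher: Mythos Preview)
Your proposal is correct and follows essentially the same route as the paper's proof: both derive item (1) directly from Corollary~\ref{c:kgb}, and for item (2) both invoke \eqref{eq:mu>}, the disjointness built into $\cb_{k_i}(F_i)$, and the nested inclusions $E_{n_z}\subset B(x_{n_z},r_{n_z})\subset L_{n_z}$ with $z\in B(x_{n_z},r_{n_z})$. Your explicit triangle-inequality computation $|p-q|\ge |z-w|-2r_{n_z}-2r_{n_w}\ge 3(r_{n_z}+r_{n_w})$ is in fact a crisper justification of the separation bound than the paper's somewhat terse ``these nested inclusions guarantee\ldots'' sentence.
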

\begin{proof}
	(1) It follows from immediately from Lemma \ref{l:kgbf}.

	(2) Equation \eqref{eq:mu>reformulation} is just a reformulation of \eqref{eq:mu>}. By the construction of the collection $\cb(F_i)$, the balls it contains are pairwise disjoint. Thus, it remains to verify the separation property:
	\[\dist(E_{n_z},E_{n_w})\ge\max\{r_{n_z},r_{n_w}\}.\]
	This follows from two observations (see \eqref{eq:condnz} and \eqref{eq:Enz}): first, the center $z$ lies in $B(x_{n_z}, r_{n_z})$; second, the set $E_{n_z}$ is contained in $B(x_{n_z}, r_{n_z})$, which in turn is contained in $L_{n_z} = B(z, 5r_{n_z})$. These nested inclusions guarantee that the sets $E_{n_z}$ are mutually disjoint and separated by at least $\max\{r_{n_z}, r_{n_w}\}$, since they are contained in disjoint balls $L_{n_z}$.
\end{proof}

\begin{rem}\label{r:not Ahlfors}
	Let us explain why it is necessary to go to such lengths to construct $A$ defined in \eqref{eq:A}. First, suppose, for the moment, that $\nu=\mu$ is the $\delta$-Ahlfors regular measure. Then, any ball $B(x,r)\subset F_i$ `cannot contain too much' sets $L_{n_z}=B(z,5r_{n_z})$, as the $\nu$-measure of $B(x,r)$ and $L_{n_z}$ is proportional to the $\delta$-power of their respective radii. This essentially means that the sets $L_{n_z}$, and hence $E_{n_z}$, are well-separated. Therefore, one could expect that the Hausdorff content of $A$, and hence the Hausdorff dimension of $\limsup E_n$, is as large as anticipated.

	 However, the situation becomes subtle when $\nu$ is singular with respect to $\mu$, and we no longer have an effective and uniform estimate for $\nu(B(x, r))$. In such cases, it could happen that $\nu(B(x, r)) = r^\alpha$, with $\alpha$ much smaller than $\hdimu \nu$, and consequently, $B(x, r)$ would `contain too many' sets $L_{n_z}$, which would, in turn, lead to $B(x, r)$ containing too many sets $E_{n_z}$. In other words, the mass of $A$ may concentrate in a small part of $A$. As a result, the Hausdorff content of $A$ would be smaller than we expected, and so would the Hausdorff dimension of $\limsup E_n$.

	 The notion of quasi-self-conformality of $\nu$ is designed to avoid this situation. As can be seen in the construction of the set $E_{\nu, F_i}^{N, \varepsilon}$, the `conformal map' $f_i$ preserves some local structure of $\nu$, thus allowing us to focus on the set of points $x \in F_i$ that satisfy the desired estimate:
	 \[\frac{\nu(B(x,\rho)\cap F_i)}{\nu(F_i)}\ll\bigg(\frac{\rho}{|F_i|}\bigg)^{\hdimu\nu-\varepsilon}.\]
\end{rem}
\subsection{Hausdorff content bound of $\bigcup_{k= \ell}^\infty E_k\cap B$}
Recall condition (3) in Theorem \ref{t:MTP},
\begin{equation}\label{eq:hcEn>}
	\hc^s(E_n)\gg r_n^{\hdimu\nu},
\end{equation}
where the implied constant is independent of $n$.
For any $n\ge 1$, by Frostman's lemma,  there exists a probability measure $\lambda_n$ supported on $E_n$ such that
\begin{equation}\label{eq:lambda}
	\lambda_n(B(x,r))\stackrel{\text{Lemma 2.2}}{\ll}\frac{r^s}{\hc^s(E_n)}\stackrel{\eqref{eq:hcEn>}}{\ll} \frac{r^s}{r_n^{\hdimu\nu}},
\end{equation}
where, again, the implied constant is independent of $n$.

Since $E_n\subset B(x_n,r_n)$, this immediately implies
\begin{equation}\label{eq:s<delta}
	s\le \hdimu\nu\le \delta.
\end{equation}
Indeed, suppose for contradiction that $s>\hdimu\nu$. Then,
\[|E_n|^s\ge \hc^s(E_n)\stackrel{\eqref{eq:hcEn>}}{\gg} r_n^{\hdimu\nu}\gg |E_n|^{\hdimu\nu}.\]
This is impossible, since $|E_n|\to 0$ as $n\to\infty$ and the implied constants are independent of $n$. Hence \eqref{eq:s<delta} must hold.

Let $A$ be defined as in \eqref{eq:A}. Define a probability measure $\eta$ supported on $A\subset \bigcup_{k= \ell}^\infty E_k\cap B$ by
\[\eta=\sum_{F_i\in \cf_B}\sum_{L_{n_z}\in\cb(F_i)}\frac{\mu(F_i)}{\sum_{F_i\subset \cf_B}\mu(F_i)}\cdot\frac{\nu(L_{n_z})}{\sum_{L_{n_z}\in\cb(F_i)}\nu(L_{n_z})}\cdot \lambda_{n_z}.\]
\begin{rem}
	The construction of the measure $\eta$ is fairly standard:
	\begin{enumerate}[\upshape(1)]
		\item the sum of the $\mu$-measures of the sets $F_i \in \mathcal{F}_B$ is comparable to that of $B$ (see \eqref{eq:m>}). We therefore assign to each $F_i$ a weight equal to the proportion of its $\mu$-measure in the total $\mu$-measure, that is,
		\[\frac{\mu(F_i)}{\sum_{F_i\subset \cf_B}\mu(F_i)};\]
		\item for each $F_i\in\cf_B$, the sum of the $\nu$-measure of the sets $L_{n_z}\in F_i$ is comparable to that of $F_i$ (see \eqref{eq:mu>reformulation}). We therefore assign to each $L_{n_z}\in F_i$ a weight
		\[\frac{\mu(F_i)}{\sum_{F_i\subset \cf_B}\mu(F_i)}\cdot\frac{\nu(L_{n_z})}{\sum_{L_{n_z}\in\cb(F_i)}\nu(L_{n_z})};\]
		\item since $L_{n_z}$ has only one descendant $E_{n_z}$, we assign the entire weight of $L_{n_z}$ to $E_{n_z}$. This together with a natrual measure $\lambda_{n_z}$ supported on $E_{n_z}$ gives the definition of $\eta$.
	\end{enumerate}
\end{rem}

Next, we estimate the $\eta$-measure of arbitrary balls, which will allow us to apply the mass distribution principle and conclude the desired lower bound on the Hausdorff content (see \eqref{eq:content bound}). Suppose that $r>0$ and
\begin{equation}\label{eq:x}
	x\in E_{n_w}\quad\text{for some\quad $E_{n_w}\subset L_{n_w}=B(w,5r_{n_w})\in\cb(F_i)$}.
\end{equation}
The separation properties of the collections $\cf_B$ and $\cb(F_i)$ (established in Lemma \ref{l:geoA}) suggest us to consider four different cases.

\noindent\textbf{Case 1:} $r > |B|$. Since $\eta$ is a probability measure supported on a subset of $B$, the measure of any ball with radius larger than $|B|$ is trivially bounded by 1. Using the $\delta$-Ahlfors regularity of $\mu$ and $s\le \delta$ (see \eqref{eq:s<delta}), we have
\begin{equation}\label{eq:case1}
	\eta(B(x,r))\le 1<\frac{r^\delta}{|B|^\delta}\ll \frac{r^s}{\mu(B)}.
\end{equation}

\noindent\textbf{Case 2:} $|F_i| \leq r < |B|$. By the separation property of the collection $\cf_B$, different sets $F_j$ are well spaced apart. Specifically, for any $F_j\in\cf_B$ distinct with $F_i$, by Lemma \ref{l:geoA} (1),
\begin{equation}\label{eq:dist}
	\dist(F_i,F_j)\ge \max\{|F_i|,|F_j|\}.
\end{equation}
If a distinct $F_j$ intersects $B(x,r)$, then its diameter must be at most $r$, implying that $F_j$ lies within a slightly larger ball $B(x, 2r)$. It follows that
\begin{align}
	\eta(B(x,r))&\le \sum_{\substack{F_i\in \cf_B\\ F_i\subset B(x,2r)}}\frac{\mu(F_i)}{\sum_{F_i\subset \cf_B}\mu(F_i)}\le \frac{\mu(B(x,2r))}{\sum_{F_i\subset \cf_B}\mu(F_i)}\stackrel{\eqref{eq:m>}}{\ll}\frac{r^\delta}{\mu(B)}\notag\\
	& \stackrel{\eqref{eq:s<delta}}{\le}\frac{r^s}{\mu(B)}.\label{eq:case2}
\end{align}
\noindent\textbf{Case 3:} $r_{n_w} \leq r < |F_i|$. Here, $B(x,r)$ intersects only one $F_i$ because of the minimal distance (see \eqref{eq:dist}) between distinct sets $F_i$. We break it down into two subcases:

\noindent\textbf{Subcase 3a:} $r \geq |F_i|/(16C N)$.
By the definition of $\eta$,
\begin{equation}\label{eq:case3a}
	\eta(B(x,r))\le \frac{\mu(F_i)}{\sum_{F_i\subset \cf_B}\mu(F_i)}\stackrel{\eqref{eq:m>}}{\ll}\frac{|F_i|^\delta}{\mu(B)}\le \frac{(16C N r)^\delta}{\mu(B)}\stackrel{\eqref{eq:s<delta}}{\ll} \frac{r^s}{\mu(B)},
\end{equation}
where we use the fact that $N=N(\varepsilon)$ is independent of $B$ (see \eqref{eq:gammaeps}).

\noindent\textbf{Subcase 3b}: $r_{n_w}\le r<|F_i|/(16C N)$. For any ball $L_{n_z}\in\cb(F_i)$ distinct with $L_{n_w}$, if $B(x,r)\cap E_{n_z}=\emptyset$, then
\[\lambda_{n_z}(B(x,r))=0,\]
since $\lambda_{n_z}$ is supported on the set $E_{n_z}$.
Consequently, we have
\[\begin{split}
	\eta(B(x,r))&\le \sum_{\substack{L_{n_z}\in\cb(F_i)\\B(x,r)\cap E_{n_z}\ne\emptyset}}\frac{\mu(F_i)}{\sum_{F_i\subset \cf_B}\mu(F_i)}\cdot\frac{\nu(L_{n_z})}{\sum_{L_{n_z}\in\cb(F_i)}\nu(L_{n_z})}\\
	&\stackrel{\text{\eqref{eq:m>} and  \eqref{eq:mu>reformulation}}}{\ll}\sum_{\substack{L_{n_z}\in\cb(F_i)\\B(x,r)\cap E_{n_z}\ne\emptyset}}\frac{\mu(F_i)}{\mu(B)}\cdot\frac{\nu(L_{n_z})}{\nu(F_i)}.
\end{split}\]
 Note that by \eqref{eq:distE} the sets $E_{n_z}$ and $E_{n_w}$ are well-separated:
\begin{equation}\label{eq:distEw}
	\dist(E_{n_z},E_{n_w})\ge\max\{r_{n_z},r_{n_w}\}.
\end{equation}
Therefore, if $B(x,r)\cap E_{n_z}\ne\emptyset$, then it must be that
\[\begin{split} r>r_{n_z}\ge|L_{n_z}|/10\quad\Longrightarrow\quad E_{n_z}\subset L_{n_z}\subset B(x,11r)\subset B(w,16r),
\end{split}\]
where the last inclusion uses the fact that $x\in E_{n_w}\subset B(w,5r_{n_w})$ (see \eqref{eq:x}). It then follows that
\[\begin{split}
	\eta(B(x,r))&\ll\sum_{\substack{L_{n_z}\in\cb(F_i)\\B(x,r)\cap E_{n_z}\ne\emptyset}}\frac{\mu(F_i)}{\mu(B)}\cdot\frac{\nu(L_{n_z})}{\nu(F_i)}.\ll\sum_{\substack{L_{n_z}\in\cb(F_i)\\ L_{n_z}\subset B(w,16r)}}\frac{\mu(F_i)}{\mu(B)}\cdot\frac{\nu(L_{n_z})}{\nu(F_i)}\\
	&\le\frac{\mu(F_i)}{\mu(B)}\cdot\frac{\nu(B(w,16r)\cap F_i)}{\nu(F_i)}.
\end{split}\]
Note that $w\in E_{\nu,F_i}^{N,\varepsilon}$ by our construction (see Lemma \ref{l:geoA} (2) and \eqref{eq:x}). Since $r<|F_i|/(16C N)$ (equivalently $16 r<|F_i|/(CN)$), by the definition of $E_{\nu,F_i}^{N,\varepsilon}$,
\begin{equation}\label{eq:<dimHnu-e}
	\frac{\mu(B(w,16r)\cap F_i)}{\nu(F_i)}\ll \bigg(\frac{16r}{|F_i|}\bigg)^{\hdimu\nu-\varepsilon}\stackrel{\eqref{eq:s<delta}}{\ll} \frac{r^{s-\varepsilon}}{|F_i|^\delta}.
\end{equation}
Putting these estimates together, we conclude that
\begin{equation}\label{eq:case3b}
	\eta(B(x,r))\ll\frac{\mu(F_i)}{\mu(B)}\cdot\frac{\nu(B(w,16r)\cap F_i)}{\nu(F_i)}\ll\frac{\mu(F_i)}{\mu(B)}\cdot \frac{r^{s-\varepsilon}}{|F_i|^\delta}\asymp\frac{r^{s-\varepsilon}}{\mu(B)}.
\end{equation}

\noindent\textbf{Case 4}: $0< r<r_{n_w}$. In this scale, due to the separation of the sets $E_{n_z}$ (see \eqref{eq:distE}), the ball $B(x,r)$ can intersect only the single set $E_{n_w}$ containing $x$. Note that the discussion of Case 3 implies that for $r=r_{n_w}$ (see \eqref{eq:<dimHnu-e}),
\[\frac{\mu(F_i)}{\sum_{F_i\subset \cf_B}\mu(F_i)}\cdot\frac{\nu(L_{n_z})}{\sum_{L_{n_z}\in\cb(F_i)}\nu(L_{n_z})}\ll \frac{r_{n_w}^{\hdimu\nu-\varepsilon}}{\mu(B)}.\]
By the Frostman-type property for $\lambda_{n_w}$ (see \eqref{eq:lambda}),
\begin{align}
		\eta(B(x,r))&\le \frac{\mu(F_i)}{\sum_{F_i\subset \cf_B}\mu(F_i)}\cdot\frac{\nu(L_{n_z})}{\sum_{L_{n_z}\in\cb(F_i)}\nu(L_{n_z})}\cdot\lambda_{n_w}(B(x,r))\notag\\
		&\ll \frac{r_{n_w}^{\hdimu\nu-\varepsilon}}{\mu(B)}\cdot \lambda_{n_w}(B(x,r))\stackrel{\eqref{eq:lambda}}{\ll}\frac{r_{n_w}^{\hdimu\nu-\varepsilon}}{\mu(B)}\cdot \frac{r^s}{r_{n_w}^{\hdimu\nu}}\notag\\
	&=\frac{r_{n_w}^{-\varepsilon}r^s}{\mu(B)}\le \frac{r^{s-\varepsilon}}{\mu(B)},\label{eq:case4}
\end{align}
where the last inequality follows from $0<r<r_{n_w}$.

By Cases 1--4, we have for any ball $B(x,r)$,
\[\eta(B(x,r))\ll\frac{r^{s-\varepsilon}}{\mu(B)}=\frac{r^t}{\mu(B)},\]
where the equality follows from $\varepsilon=s-t$. Since $\eta$ is supported on $A\subset \bigcup_{k= \ell}^\infty E_k\cap B$, by the mass distribution principle,
\[\hc^t\bigg(\bigcup_{k= \ell}^\infty E_k\cap B\bigg)\gg \mu(B).\]
With the discussion at the beginning of Section \ref{s:MTP}, the proof of Theorem \ref{t:MTP} is now complete.

\section{Application to $\beta$-transformations}\label{s:beta}
\subsection{Definition and some basic properties}
For $\beta>1$, the $\beta$-transformation $T_\beta:[0,1)\to[0,1)$ is defined by
\[T_\beta x=\beta x\ (\textrm{mod}\ 1).\]
For any $ n\ge 1 $ and $ x\in[0,1) $, define $ \epsilon_n(x,\beta)=\lfloor \beta T_\beta^{n-1}x\rfloor $, where $\lfloor x\rfloor$ denotes the integer part of $x$. Then, we can write
\[x=\frac{\epsilon_1(x,\beta)}{\beta}+\frac{\epsilon_2(x,\beta)}{\beta^2}+\cdots+\frac{\epsilon_n(x,\beta)}{\beta^n}+\cdots,\]
and we call the sequence
\[\epsilon(x,\beta):=(\epsilon_1(x,\beta),\epsilon_2(x,\beta),\dots)\]
the $\beta$-expansion of $ x $. By the definition of $ T_\beta $, it is clear that, for $ n\ge 1 $, $ \epsilon_n(x,\beta) $ belongs to the alphabet $ \{0,1,\dots,\lceil\beta-1\rceil\} $, where $ \lceil x\rceil $ denotes the smallest integer greater than or equal to $ x $. When $ \beta $ is not an integer, then not all sequences of $ \{0,1,\dots,\lceil\beta-1\rceil\}^\N $ are the $ \beta $-expansion of some $ x\in[0,1) $. This leads to the notion of $\beta$-admissible sequence.

\begin{defn}
	A finite or an infinite sequence $ (\epsilon_1,\epsilon_2,\dots)\in\{0,1,\dots,\lceil\beta-1\rceil\}^\N $ is said to be $\beta$-admissible if there exists an $ x\in[0,1) $ such that the $\beta$-expansion of $ x $ begins with $ (\epsilon_1,\epsilon_2,\dots) $.
\end{defn}

Denote by $ \Sigma_\beta^n $ the collection of all admissible sequences of length $ n $. The following result of R\'enyi~\cite{RenyibetaAMH} implies that the dynamical system $ ([0,1],T_\beta) $ admits $ \log\beta $ as its topological entropy.
\begin{lem}[{\cite[(4.9) and (4.10)]{RenyibetaAMH}}]\label{l:renyi}
	Let $ \beta>1 $. For any $ n\ge 1 $,
	\[\beta^n\le \# \Sigma_\beta^n\le \frac{\beta^{n+1}}{\beta-1},\]
	where $ \# $ denotes the cardinality of a finite set.
\end{lem}

\begin{defn}
	For any $ \be_n:=(\epsilon_1,\dots,\epsilon_n)\in\Sigma_\beta^n $, we call
	\[I_{n,\beta}(\be_n):=\{x\in[0,1):\epsilon_k(x,\beta)=\epsilon_k,1\le k\le n\}\]
	an $ n $th level cylinder.
\end{defn}
Each cylinder $ I_{n,\beta}(\be_n) $ can be viewed as a subinterval of $[0,1)$ consisting of all points whose first $n$ digits in their $\beta$-expansion coincide with the word $\be_n$. These cylinders form a natural partition of the interval $[0,1)$ at level $n$, and they shrink as $n$ increases.
Clearly, for any $\be_n \in \Sigma_\beta^n$, the map $T_\beta^n$ is linear with slope $\beta^n$ when restricted to the cylinder $I_{n,\beta}(\be_n)$, and it sends $I_{n,\beta}(\be_n)$ into $[0,1)$.
If $\beta$ is not an integer, then the dynamical system $(T_\beta, [0,1))$ is not a full shift, and thus $T_\beta^n|_{I_{n,\beta}(\be_n)}$ may fail to be onto $[0,1)$.
In other words, the length of $I_{n,\beta}(\be_n)$ may be strictly less than $\beta^{-n}$, which complicates the analysis of the dynamical properties of $T_\beta$.
In many cases, including the one considered here, it is more convenient to restrict attention to cylinders of maximal length, which motivates the definition of {\em full cylinder}.
\begin{defn}
	A cylinder $ I_{n,\beta}(\be_n) $ or a sequence $ \be_n\in\Sigma_\beta^n $ is called full if it has maximal length, that is, if
	\[|I_{n,\beta}(\be_n)|=\beta^{-n}.\]
\end{defn}
The key property of full sequences is that the concatenation of any two full sequences is again full.
	\begin{prop}[{\cite[Lemma 3.2]{FanWangbetaNon}}]\label{p:concatenation}
	An $ n $th level cylinder $ I_{n,\beta}(\be_n) $ is full if and only if, for any $\beta$-admissible sequence $ \be'_m\in\Sigma_\beta^m $ with $ m\ge 1 $, the concatenation $ \be_n\be_m' $ is still $ \beta $-admissible. Moreover,
	\[|I_{n+m,\beta}(\be_n\be_m')|=|I_{n,\beta}(\be_n)|\cdot|I_{m,\beta}(\be_m')|.\]
\end{prop}
A corollary in \cite{FanWangbetaNon} establishes a certain relationship between full and non-full cylinders. While the statement given below is not stated exactly as in their original work, it can nonetheless be rigorously justified using the same method.
\begin{lem}[{\cite[Corollary 3.3 and (3.4)]{FanWangbetaNon}}]\label{l:0^kfull}
	Let $n\in \N$ and $\be_n\in\Sigma_\beta^n$. Let $m\ge n$ be the unique integer satisfying
	\[\beta^{-m-1}<|I_{n,\beta}(\be_n)|\le\beta^{-m}.\]
	Then, $I_{n,\beta}(\be_n) = I_{m,\beta}(\be_n, 0^{m-n})$ and $I_{m+1}(\be_n, 0^{m-n+1})\subset I_{n,\beta}(\be_n) $ is full, where $0^k$ denotes the word consisting of $k$ consecutive zeros.
\end{lem}

In light of the definition of quasi-self-conformality, the collection $\mathcal{F}$ of sets required therein can naturally be taken to be the family of full cylinders. Moreover, in order to apply Theorem \ref{t:MTP}, it is necessary that the $\limsup$ set defined by full cylinders has full Lebesgue measure. Fortunately, this is indeed the case.
\begin{lem}[{\cite[Lemma 1 (1)]{WaltersbetaGibbs}}]\label{l:union=[0,1]}
	For any $N\ge 1$, we have
	\[\bigcup_{n=N}^\infty\bigcup_{\be_n\in\Lambda_{\beta}^n}I_{n,\beta}(\be_n)=[0,1),\]
	where $\Lambda_\beta^n$ denotes the set of full sequences of length $n$.
	In particular, the $\limsup$ set defined by all full cylinders has full Lebesgue measure.
\end{lem}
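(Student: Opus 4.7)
The second assertion is immediate from the first: if
\[
\bigcup_{n\ge N}\bigcup_{\be_n\in\Lambda_\beta^n}I_{n,\beta}(\be_n) = [0,1)
\]
for every $N\ge 1$, then the $\limsup$ set in question equals $\bigcap_{N\ge 1}[0,1) = [0,1)$ and in particular has full Lebesgue measure. So the plan is to prove the first equality. Since cylinders of level $n$ partition $[0,1)$, this reduces to showing that for every $x\in[0,1)$, the set $\{n\ge 1 : I_{n,\beta}(\epsilon_1(x,\beta),\dots,\epsilon_n(x,\beta))\text{ is full}\}$ is unbounded.

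The key tool is Parry's admissibility criterion. Let $\epsilon^* = (\epsilon_1^*,\epsilon_2^*,\dots) := \epsilon(1,\beta)$ (interpreted via the modified expansion when $\beta$ is a simple Parry number). A word $(\epsilon_1,\dots,\epsilon_n)$ is admissible iff $(\epsilon_{k+1},\dots,\epsilon_n) \le_{\mathrm{lex}} (\epsilon_1^*,\dots,\epsilon_{n-k}^*)$ for every $0\le k<n$, and the cylinder $I_{n,\beta}(\epsilon_1,\dots,\epsilon_n)$ is full precisely when $T_\beta^n$ maps it bijectively onto $[0,1)$, which happens iff the strict version of Parry's inequality holds at every shift. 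Writing $\epsilon_i := \epsilon_i(x,\beta)$, this translates into the following handy reformulation: the level-$n$ cylinder containing $x$ is \emph{not} full iff there exists $0 \le k < n$ with
\[
(\epsilon_{k+1},\dots,\epsilon_n) = (\epsilon_1^*,\dots,\epsilon_{n-k}^*),
\]
i.e.\ some proper suffix of $(\epsilon_1,\dots,\epsilon_n)$ agrees with an initial segment of $\epsilon^*$.

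Suppose, toward a contradiction, that such a witness $k_n \in [0, n)$ exists for every $n\ge N$. If $\{k_n\}$ is bounded, pigeonhole extracts a subsequence with $k_n\equiv k^*$; passing to the limit yields $\sigma^{k^*}\epsilon(x,\beta) = \epsilon^*$, meaning that $T_\beta^{k^*}x$ has the $\beta$-expansion of $1$ --- impossible since $T_\beta^{k^*}x \in [0,1)$. If $\{k_n\}$ is unbounded, I would track the quantity $L_n$ defined as the length of the longest suffix of $(\epsilon_1,\dots,\epsilon_n)$ that coincides with an initial segment of $\epsilon^*$ (a Knuth--Morris--Pratt-style failure function). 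The hypothesis forces $L_n \ge 1$ for every $n\ge N$; the central task is then to show that this never-returning-to-zero behaviour constrains the orbit $(T_\beta^k x)_{k\ge 0}$ to shadow the orbit of $1$ indefinitely, ultimately producing the same contradiction. Handling this unbounded case rigorously --- together with the modifications needed in the simple Parry case --- is the principal obstacle of the proof; once it is resolved, the lemma follows.
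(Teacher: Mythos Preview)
The paper does not prove this lemma at all: it is quoted verbatim from Walters and used as a black box. So there is no ``paper's proof'' to compare against, and the question is simply whether your argument stands on its own.

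It does not, for two reasons. First, your characterisation ``$I_{n,\beta}(\be_n)$ is not full iff some suffix of $\be_n$ equals a prefix of $\epsilon^*$'' is only correct when $\beta$ is not a simple Parry number. For integer $\beta$, for instance, $\epsilon^*=(\beta-1)^\infty$, so any word ending in the digit $\beta-1$ has a suffix matching a prefix of $\epsilon^*$, yet every cylinder is full. The correct criterion carries an extra clause (equivalently: the matching suffix must also satisfy $\epsilon_1^*>\epsilon_{n-k+1}^*$), and you would need to incorporate this throughout. You flag the simple-Parry case as needing ``modifications'' but never supply them.

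Second, and more seriously, the unbounded-$k_n$ case is the entire heart of the matter, and you leave it open. Calling it ``the principal obstacle'' and then stopping is not a proof. The bounded case is easy precisely because it forces a tail of $\epsilon(x,\beta)$ to equal $\epsilon^*$; in the unbounded case no single shift of $\epsilon(x,\beta)$ need agree with $\epsilon^*$ on an infinite block, and your KMP heuristic (``$L_n\ge 1$ forever forces the orbit to shadow that of $1$'') is not an argument --- one has to analyse how the failure function of $\epsilon^*$ interacts with its own self-overlaps, and this is exactly where the Parry/simple-Parry dichotomy bites. Until that case is actually written out, the lemma is not proved.
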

The mass distribution principle stated in Proposition \ref{p:MDP} requires estimating the measure of arbitrary balls in relation to their radii. However, after a detailed study of the distribution of full cylinders, Bugeaud and Wang \cite[Proposition 1.3]{BugeaudWangSTPJFG} showed that it suffices to consider balls that are themselves cylinders.
\begin{prop}[Modified mass distribution principle {\cite[Proposition 1.3]{BugeaudWangSTPJFG}}]\label{p:modifiedMDP}
	Let $E$ be a Borel measurable set in $[0,1]$ and $\lambda$ be a Borel measure with $\lambda(E)>0$. Assume that there exist a positive constant $c>0$ and an integer $n_0$ such that,  for any $n\ge n_0$ the measure of any cylinder $I_{n,\beta}(\be_n)$ of order $n$ satisfies $\lambda(I_{n,\beta}(\be_n))\le c|I_{n,\beta}(\be_n)|^s$. Then,  $\hdim E\ge s$.
\end{prop}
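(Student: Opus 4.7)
The plan is to deduce the stated principle by bootstrapping from the classical mass distribution principle (Proposition \ref{p:MDP}), after upgrading the cylinder-only hypothesis to an analogous estimate on arbitrary balls.

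The first and crucial step is a covering lemma for $\beta$-cylinders: for every $x\in[0,1)$ and every sufficiently small $r>0$, if $n$ is the unique integer with $\beta^{-n-1}\le r<\beta^{-n}$, then $B(x,r)$ is covered by at most $K$ cylinders of level $n$, where $K$ depends only on $\beta$. The content of this lemma is that, although cylinders of level $n$ may have widely varying lengths (from very small up to $\beta^{-n}$ for full cylinders), the partition of $[0,1)$ into level-$n$ cylinders cannot crowd arbitrarily many pieces into an interval of length comparable to $\beta^{-n}$. The underlying reason is Lemma \ref{l:union=[0,1]}: full cylinders, which by definition attain the maximal length $\beta^{-n}$, are abundant enough that the portion of $[0,1)$ lying outside them is partitioned into only boundedly many non-full cylinders between any two consecutive full ones.

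Granting this covering lemma, the remainder of the argument is almost immediate. Choose $n_1\ge n_0$ so that $\beta^{-n_1}$ lies below the scale on which the covering lemma applies. For any ball $B(x,r)\subset[0,1)$ with $r<\beta^{-n_1}$, pick the associated $n$ with $\beta^{-n-1}\le r<\beta^{-n}$ and a covering of $B(x,r)$ by level-$n$ cylinders $I_{n,\beta}(\be_n^{(1)}),\dots,I_{n,\beta}(\be_n^{(K')})$ with $K'\le K$, each of diameter at most $\beta^{-n}\le\beta\,r$. The hypothesis then yields
\[
\lambda(B(x,r))\le\sum_{k=1}^{K'}\lambda\big(I_{n,\beta}(\be_n^{(k)})\big)\le c\sum_{k=1}^{K'}\big|I_{n,\beta}(\be_n^{(k)})\big|^s\le cK\beta^s\,r^s.
\]
For balls with $r\ge\beta^{-n_1}$ the crude estimate $\lambda(B(x,r))\le 1\le\beta^{n_1 s}\,r^s$ is trivial. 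Hence there exists a constant $c'=c'(\beta,s,n_0,c)$ such that $\lambda(B(x,r))\le c'\,r^s$ for every ball $B(x,r)\subset\R$. Applying Proposition \ref{p:MDP} to $E$ with the measure $\lambda$ then gives $\hc^s(E)\ge\lambda(E)/c'>0$, and in particular $\hdim E\ge s$.

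The main obstacle is the covering lemma itself: producing a uniform bound $K=K(\beta)$ on the number of level-$n$ cylinders that can cluster inside an interval of length $\beta^{-n}$ requires quantitative control over the distribution of $\beta$-admissible sequences and the gap structure between consecutive full cylinders, rather than any soft geometric argument. Once this combinatorial fact about $\beta$-expansions is secured, the rest is mere bookkeeping around a single application of the standard mass distribution principle.
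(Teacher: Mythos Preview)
The paper does not supply its own proof of this proposition; it is quoted from Bugeaud--Wang, and the surrounding text only indicates that their argument rests on ``a detailed study of the distribution of full cylinders.'' Your strategy --- upgrade the cylinder estimate to a ball estimate via a covering lemma, then invoke the ordinary mass distribution principle --- is exactly the route Bugeaud--Wang take, so at the level of approach there is nothing to compare.

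There is, however, a quantitative slip in your covering lemma. You assert that between any two consecutive full cylinders of order $n$ there lie only boundedly many non-full ones, with a bound $K=K(\beta)$ independent of $n$, and you point to Lemma~\ref{l:union=[0,1]} as the reason. That lemma only says that full cylinders of \emph{all} orders $\ge N$ cover $[0,1)$; it gives no control on gaps within a fixed order. What Bugeaud--Wang actually prove is that among any $n+1$ consecutive cylinders of order $n$ at least one is full, and for general $\beta$ this $n$ is essentially sharp. Hence a ball of radius $r\asymp\beta^{-n}$ may meet on the order of $n$ cylinders of level $n$, not $O(1)$.

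This does not wreck the argument, but it does force a small change in the endgame. With $K\lesssim n\asymp\log(1/r)$ you obtain
\[
\lambda\bigl(B(x,r)\bigr)\;\lesssim\; r^{s}\log(1/r),
\]
which no longer yields $\hc^{s}(E)>0$ directly. Instead, for each $\varepsilon>0$ absorb the logarithm into $r^{-\varepsilon}$ to get $\lambda(B(x,r))\lesssim_{\varepsilon} r^{s-\varepsilon}$, apply Proposition~\ref{p:MDP} at exponent $s-\varepsilon$, and let $\varepsilon\downarrow 0$ to conclude $\hdim E\ge s$. So your plan is sound once the covering constant is taken to be $O(n)$ rather than $O(1)$.
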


\subsection{Pressure function}\label{ss:presure function beta}
Let $\phi:[0,1]\to\R$ be a continuous function. The pressure function for $\beta$-dynamical system associated to $\phi$ is defined by the limit
\begin{equation}\label{eq:pressurelimitbeta}
	P(\phi,T_\beta):=\lim_{n\to\infty}\frac{1}{n}\log\sum_{\be_n\in\Sigma_\beta^n}e^{S_n\phi(y)},
\end{equation}
where for each admissible word $\be_n\in\Sigma_\beta^n$, the point $y$ is any element in the corresponding cylinder $I_{n,\beta}(\be_n)$, and $S_n\phi(y)$ denotes the ergodic sum $\sum_{k=0}^{n-1}\phi(T_\beta^ky)$. The existence of the limit in \eqref{eq:pressurelimitbeta} follows from the subadditivity:
\[\log\sum_{\be_{n+m}\in\Sigma_\beta^{n+m}}e^{S_{n+m}\phi(y)}\le \log\sum_{\be_{n}\in\Sigma_\beta^n}e^{S_n\phi(y)}+\log\sum_{\be_{m}\in\Sigma_\beta^{m}}e^{S_{m}\phi(T^ny)},\]
and the limit does not depend on the choice of $y$ by the continuity of $\phi$.

It follows directly from the definition that the pressure function is continuous with respect to $\phi$. In the absence of a suitable reference, we provide a proof for the sake of completeness.
\begin{prop}\label{p:continuous beta}
	Let $\phi$ and $\varphi$ be two continuous functions defined on $[0,1]$. Then,
	\[|P(\phi,T_\beta)-P(\varphi,T_\beta)|\le \|\phi-\varphi\|_\infty,\]
	where $\|\cdot\|_{\infty}$ denotes the supremum norm of a function in $C^0([0,1])$.
	Consequently, if $\phi$ is positive, then there exists $0< s=s(\phi)< 1$ such that
	\[P(-s(\phi+\log\beta),T_\beta)=0.\]
\end{prop}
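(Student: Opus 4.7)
The plan is to deduce both assertions directly from the definition \eqref{eq:pressurelimitbeta} of the pressure function, with the second part following from the first via the intermediate value theorem once one identifies the values of $P$ at two convenient endpoints.

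For the Lipschitz-type estimate, let $M=\sup_{x\in[0,1]}|\phi(x)-\varphi(x)|$. For any $\be_n\in\Sigma_\beta^n$ and any $y\in I_{n,\beta}(\be_n)$ the telescoping bound $|S_n\phi(y)-S_n\varphi(y)|\le nM$ holds. Exponentiating, summing over $\be_n\in\Sigma_\beta^n$, taking logarithms and dividing by $n$ yields
\[
\frac{1}{n}\log\sum_{\be_n\in\Sigma_\beta^n}e^{S_n\varphi(y)}-M\le \frac{1}{n}\log\sum_{\be_n\in\Sigma_\beta^n}e^{S_n\phi(y)}\le \frac{1}{n}\log\sum_{\be_n\in\Sigma_\beta^n}e^{S_n\varphi(y)}+M.
\]
Passing to the limit as $n\to\infty$ gives $|P(\phi,T_\beta)-P(\varphi,T_\beta)|\le M$, which is the first claim.

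For the second assertion, I would introduce $f(s):=P(-s(\phi+\log\beta),T_\beta)$ on $[0,1]$. Since $\sup_{x\in[0,1]}|(-s(\phi+\log\beta))-(-s'(\phi+\log\beta))|\le|s-s'|\sup|\phi+\log\beta|$, the first part shows that $f$ is (Lipschitz) continuous in $s$. Two elementary consequences of the definition will bracket a zero: the translation identity $P(\psi+c,T_\beta)=P(\psi,T_\beta)+c$ (factor out $e^{nc}$ from the defining sum) and the monotonicity $P(\psi_1,T_\beta)\le P(\psi_2,T_\beta)$ whenever $\psi_1\le \psi_2$ pointwise. Combining these with the standard fact that the exponential growth rate of $|\Sigma_\beta^n|$ equals $\log\beta$ (i.e.\ the topological entropy of the $\beta$-shift) gives $P(0,T_\beta)=\log\beta$ and $P(-\log\beta,T_\beta)=0$. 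Since $\phi$ is continuous and positive on the compact interval $[0,1]$, we have $\inf_{[0,1]}\phi>0$, and hence
\[
f(0)=\log\beta>0,\qquad f(1)\le P(-\log\beta-\inf_{[0,1]}\phi,T_\beta)=-\inf_{[0,1]}\phi<0.
\]
The intermediate value theorem then produces some $s\in(0,1)$ with $f(s)=0$.

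The only step that is not a direct bookkeeping exercise is the evaluation $P(0,T_\beta)=\log\beta$, which depends on knowing that $|\Sigma_\beta^n|=\beta^{n+o(n)}$. This is classical for the $\beta$-shift and can either be cited from the literature on $\beta$-expansions or recovered from the crude two-sided bound $\beta^n\le|\Sigma_\beta^n|\le\beta^{n+1}/(\beta-1)$; everything else in the proof is a routine manipulation of the exponential sum defining $P$.
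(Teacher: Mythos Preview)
Your argument is correct. The paper does not actually supply a proof of this proposition: it simply remarks that the continuity estimate ``follows directly from the definition'' and states the proposition without further justification. Your write-up is precisely the routine verification the paper is gesturing at---the pointwise bound $|S_n\phi-S_n\varphi|\le nM$ pushed through the exponential sum for the first part, and the intermediate value theorem for the second, using $P(0,T_\beta)=\log\beta$ (equivalently $|\Sigma_\beta^n|=\beta^{n+o(n)}$, which is R\'enyi's classical estimate) together with the translation identity $P(\psi+c,T_\beta)=P(\psi,T_\beta)+c$ to bracket a zero. There is nothing to compare: your proof is the natural unpacking of what the paper leaves implicit.
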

\begin{proof}
	By the definition of $P(\phi,T_\beta)$ (see \eqref{eq:pressurelimitbeta}), for each $n\in\N$, we have
	\[\begin{split}
		\frac{1}{n}\log\sum_{\be_n\in\Sigma_\beta^n}e^{S_n\phi(y)}&=\frac{1}{n}\log\sum_{\be_n\in\Sigma_\beta^n}e^{S_n\varphi(y)+S_n\phi(y)-S_n\varphi(y)}\\
		&\le \frac{1}{n}\log\sum_{\be_n\in\Sigma_\beta^n}e^{S_n\varphi(y)+n\|\phi-\varphi\|_\infty}\\
		&=\bigg(\frac{1}{n}\log\sum_{\be_n\in\Sigma_\beta^n}e^{S_n\varphi(y)}\bigg) +\|\phi-\varphi\|_\infty.
	\end{split}\]
	Letting $n\to\infty$, it follows that
	\[P(\phi,T_\beta)-P(\varphi,T_\beta)\le \|\phi-\varphi\|_\infty.\]
	Interchanging the roles of $\phi$ and $\varphi$ immediately yields the first point of the proposition.

	By the above conclusion, the function $s\mapsto P(-s(\phi+\log\beta),T_\beta)$ is continuous in $s$. By the intermediate value theorem, to conclude the second point of the proposition, it suffices to prove that $P(-s(\phi+\log\beta),T_\beta)$ takes values of opposite signs at $s=0$ and $s=1$. For $s=0$, we have
	\[P(0,T_\beta)=\lim_{n\to\infty}\frac{1}{n}\log\sum_{\be_n\in\Sigma_\beta^n}e^0=\lim_{n\to\infty}\frac{1}{n}\log\#\Sigma_\beta^n\stackrel{\text{Lemma \ref{l:renyi}}}{=}\log\beta>0.\]
	For $s=1$, since $\phi$ is positive, we have
	\[\begin{split}
		P(-(\phi+\log\beta),T_\beta)&=\lim_{n\to\infty}\frac{1}{n}\log\sum_{\be_n\in\Sigma_\beta^n}e^{-S_n(\phi+\log\beta)(y)}\\
		&= \lim_{n\to\infty}\frac{1}{n}\log\sum_{\be_n\in\Sigma_\beta^n}e^{-S_n\phi(y)-n\log\beta}\le \lim_{n\to\infty}\frac{1}{n}\log\sum_{\be_n\in\Sigma_\beta^n}e^{-n\log\beta}\\
		&=\lim_{n\to\infty}\frac{1}{n}\log\big(\beta^{-n}\cdot\#\Sigma_\beta^n\big)\stackrel{\text{Lemma \ref{l:renyi}}}{=}0,
	\end{split}\]
	which is what we want.
\end{proof}
Guided by Theorem \ref{t:MTP}, it is necessary to choose a reference measure $\nu$ --- generally singular with respect to the Lebesgue measure --- to measure the size of the $\limsup$ sets. Such a measure is usually chosen as the Gibbs measure, whose existence is ensured by the following result.

\begin{thm}[{\cite[Theorems 13 and 16]{WaltersbetaGibbs}}]\label{t:Gibbsbeta}
	Let $\phi:[0,1] \to \mathbb{R}$ be a Lipschitz continuous function. Then there exists a unique equilibrium state $\nu_\phi$ associated with $\phi$ such that the following properties hold:
	\begin{enumerate}[(1)]
		\item The pressure satisfies the variational principle:
		\[
		P(\phi, T_\beta) = h_{\nu_\phi} + \int \phi \, d\nu_\phi,
		\]
		where $h_{\nu_\phi}$ is the measure-theoretic entropy of $\nu_\phi$ with respect to $T_\beta$.

		\item For any cylinder $I_{n,\beta}(\be_n)$ of level $n$, the measure $\nu_\phi$ satisfies the upper Gibbs property:
		\[
		\nu_\phi(I_{n,\beta}(\be_n)) \ll e^{S_n \phi(x) - n P(\phi, T_\beta)},
		\]
		where $x \in I_{n,\beta}(\be_n)$ is arbitrary. Moreover, if the cylinder $I_{n,\beta}(\be_n)$ is full, then the Gibbs property holds in the sense:
		\[
		\nu_\phi(I_{n,\beta}(\be_n)) \asymp e^{S_n \phi(x) - n P(\phi, T_\beta)}.
		\]
		Here, the implied constants do not depend on the particular choice of $x \in I_{n,\beta}(\bm\epsilon_n)$.
	\end{enumerate}
\end{thm}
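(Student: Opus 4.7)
My plan is to approach Theorem \ref{t:Gibbsbeta} through the Ruelle transfer operator together with the Hofbauer--Keller Markov extension of the $\beta$-transformation, since $(T_\beta, [0,1))$ generally fails to be a subshift of finite type. First I would work with the transfer operator
\[
\mathcal{L}_\phi f(x) = \sum_{T_\beta y = x} e^{\phi(y)} f(y)
\]
on a suitable function space such as the space of functions of bounded variation or Lipschitz functions on $[0,1)$. Using that $\phi$ is Lipschitz, so that $-\log|T_\beta'| + \phi$ has uniformly summable variation along branches, I would establish a Lasota--Yorke inequality for $\mathcal{L}_\phi$, which would imply that its essential spectral radius is strictly smaller than its spectral radius $e^{P(\phi, T_\beta)}$. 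Quasi-compactness of $\mathcal{L}_\phi$ then furnishes a strictly positive Lipschitz eigenfunction $h$ and a probability eigenmeasure $m$ for the dual operator, both with eigenvalue $e^{P(\phi, T_\beta)}$. The candidate equilibrium state is then $\nu_\phi := h\, m$, suitably normalized.

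In parallel, to handle the non-Markov character of $T_\beta$, I would lift the system to the Hofbauer tower $(\widehat{T}_\beta, \widehat{X})$, a piecewise linear Markov map with countably many full branches, and carry out the thermodynamic formalism there. On the tower the standard Ruelle--Perron--Frobenius machinery applies cleanly: it yields a unique equilibrium state $\widehat{\nu}_\phi$ for the lifted potential, and projecting it down to $[0,1)$ recovers $\nu_\phi$. The variational principle in part (1) would then follow by combining the definition of the pressure via cylinder sums, Rokhlin's formula for the entropy of piecewise monotonic maps applied to $\nu_\phi$, and uniqueness to rule out alternative candidates.

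For the Gibbs-type bounds in part (2), note that for any admissible word $\be_n$, the restriction $T_\beta^n|_{I_{n,\beta}(\be_n)}$ is a bijection onto its image, and the conformality of $m$ together with the cocycle identity for $\phi$ would yield
\[
m(I_{n,\beta}(\be_n)) \asymp e^{S_n \phi(x) - n P(\phi, T_\beta)} \cdot m\bigl(T_\beta^n(I_{n,\beta}(\be_n))\bigr)
\]
for any $x \in I_{n,\beta}(\be_n)$, with bounded distortion from the Lipschitz hypothesis making the choice of $x$ immaterial. Since $m$ is a probability measure, the upper bound in part (2) is immediate, and boundedness of $h$ from above transfers it to $\nu_\phi$. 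For a \emph{full} cylinder, $T_\beta^n(I_{n,\beta}(\be_n)) = [0,1)$, so $m(T_\beta^n(I_{n,\beta}(\be_n))) = 1$, and we gain the matching lower bound; combined with positivity of $h$ bounded below on $[0,1)$, this produces $\nu_\phi(I_{n,\beta}(\be_n)) \asymp e^{S_n \phi(x) - n P(\phi, T_\beta)}$.

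The main obstacle, in my view, is precisely the non-Markov character of $(T_\beta, [0,1))$: the set of admissible sequences is governed by the $\beta$-expansion of $1$, and the standard pieces of the thermodynamic machinery --- quasi-compactness of $\mathcal{L}_\phi$, spectral gap, uniqueness --- require careful control of the pressure on the boundary pieces of the Hofbauer tower. Concretely, one must verify a pressure gap, showing that the pressure at infinity along the tower is strictly smaller than $P(\phi, T_\beta)$; this is what yields both a genuine spectral gap for the lifted operator $\widehat{\mathcal{L}}_\phi$ and the two-sided Gibbs bound on full cylinders. I expect this gap estimate, together with the compatibility between the tower measure and its projection, to consume most of the technical effort.
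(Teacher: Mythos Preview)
The paper does not prove this theorem: it is quoted verbatim as a citation of \cite[Theorems 13 and 16]{WaltersbetaGibbs} (Walters), with no accompanying argument. So there is nothing in the paper to compare your proposal against --- the author treats the existence, uniqueness, variational principle, and Gibbs bounds for $\nu_\phi$ as black-box input from the literature.

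That said, your sketch is a reasonable outline of how one proves such a result, and is broadly in the spirit of Walters's original argument: transfer operator $\mathcal{L}_\phi$, positive eigenfunction and conformal eigenmeasure, and the observation that full cylinders map onto the whole interval, which is exactly what upgrades the one-sided Gibbs bound to a two-sided one. One remark: Walters's 1978 paper predates the Hofbauer tower and works directly with $T_\beta$ on $[0,1)$, exploiting the combinatorics of the $\beta$-expansion of $1$ rather than lifting to a countable Markov extension. Your tower route is a legitimate alternative (and arguably conceptually cleaner for the pressure-gap / spectral-gap step), but it is not what the cited reference does, and it is more machinery than the present paper requires since the result is simply imported.
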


It is important to note that in condition (3) of Theorem \ref{t:MTP}, one needs to compare the Hausdorff content $\hc^s(E_n)$ with $r_n^{\hdimu \nu_\phi}$. This comparison relies on understanding the Hausdorff dimension of the Gibbs measure $\nu_\phi$.  The author believes that the following result has been established elsewhere; however, since no suitable reference could be found, we include the proof here.
\begin{lem}\label{l:dimGibbsbeta}
		Let $\phi:[0,1]\to\R$ be a Lipschitz continuous function and $\nu_\phi$ be the associated equilibrium state. Then,
		\[\hdim \nu_\phi= \frac{h_{\nu_\phi}}{\log\beta}.\]
\end{lem}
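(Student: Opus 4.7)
The plan is to establish the two one-sided inequalities $\hdimu\nu_\phi\le h_{\nu_\phi}/\log\beta$ and $\hdiml\nu_\phi\ge h_{\nu_\phi}/\log\beta$ separately; together they yield the claim. This is the standard $h/\chi$ dimension formula for an ergodic invariant measure of an expanding map, specialised to the fact that $|T_\beta'|\equiv\beta$ so that the Lyapunov exponent of $\nu_\phi$ equals $\log\beta$. By the uniqueness of the equilibrium state in Theorem~\ref{t:Gibbsbeta}, $\nu_\phi$ is ergodic, and the Shannon--McMillan--Breiman theorem applied to the partition by first-level cylinders gives, for $\nu_\phi$-a.e. $x$,
\[
-\frac{1}{n}\log\nu_\phi\bigl(I_{n,\beta}(\be_n(x))\bigr)\longrightarrow h_{\nu_\phi},
\]
where $\be_n(x):=(\epsilon_1(x,\beta),\dots,\epsilon_n(x,\beta))$.

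For the upper bound, fix $\epsilon>0$ and, for each $N$, set
\[
A_N:=\bigl\{x:\nu_\phi(I_{n,\beta}(\be_n(x)))\ge e^{-n(h_{\nu_\phi}+\epsilon)}\text{ for every }n\ge N\bigr\},
\]
so that $\nu_\phi(A_N)\to1$ as $N\to\infty$. For every $n\ge N$, at most $e^{n(h_{\nu_\phi}+\epsilon)}$ cylinders at level $n$ can meet $A_N$ (otherwise the total $\nu_\phi$-mass would exceed $1$), and each cylinder has diameter $\le\beta^{-n}$. Consequently, for $s>(h_{\nu_\phi}+\epsilon)/\log\beta$ the Hausdorff measure of $A_N$ computed from the cover by these cylinders satisfies $\hm^s(A_N)\le e^{n(h_{\nu_\phi}+\epsilon-s\log\beta)}\xrightarrow{n\to\infty}0$, yielding $\hdim A_N\le(h_{\nu_\phi}+\epsilon)/\log\beta$. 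Taking the union over $N$ and letting $\epsilon\to0^+$ produces $\hdimu\nu_\phi\le h_{\nu_\phi}/\log\beta$.

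For the lower bound I would invoke the modified mass distribution principle (Proposition~\ref{p:modifiedMDP}) with $\lambda=\nu_\phi|_{E'}/\nu_\phi(E')$, where $E'$ is a positive-$\nu_\phi$-measure set on which the Birkhoff convergence $\tfrac{1}{n}S_n\phi(x)\to\int\phi\,d\nu_\phi$ and a cylinder-length comparability $|I_{n,\beta}(\be_n(x))|\ge\beta^{-n(1+\delta)}$ both hold uniformly beyond some level $n_0$. On cylinders $I_{n,\beta}(\be_n)$ meeting $E'$, combining the Gibbs upper bound from Theorem~\ref{t:Gibbsbeta}(2) with the variational principle $P(\phi,T_\beta)=h_{\nu_\phi}+\int\phi\,d\nu_\phi$ gives
\[
\nu_\phi(I_{n,\beta}(\be_n))\ll e^{-n(h_{\nu_\phi}-\epsilon)}\le|I_{n,\beta}(\be_n)|^{s}
\]
with $s=(h_{\nu_\phi}-\epsilon)/\bigl((1+\delta)\log\beta\bigr)$; for cylinders disjoint from $E'$ the bound is trivial. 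Proposition~\ref{p:modifiedMDP} then delivers $\hdim E\ge s$ for every Borel $E$ with $\nu_\phi(E)>0$, and letting $\epsilon,\delta\to0^+$ forces $\hdiml\nu_\phi\ge h_{\nu_\phi}/\log\beta$.

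The main obstacle is verifying the cylinder-length comparability $\tfrac{1}{n}\log|I_{n,\beta}(\be_n(x))|\to-\log\beta$ for $\nu_\phi$-a.e. $x$ used above. Although $|I_{n,\beta}(\be_n)|\le\beta^{-n}$ is immediate, non-full cylinders can a priori be arbitrarily short, and since $\nu_\phi$ is generically singular with respect to Lebesgue the fullness information from Lemma~\ref{l:union=[0,1]} does not transfer. I would establish the required lower bound via a Borel--Cantelli argument applied to the sets $E_{n,\delta}:=\{x:|I_{n,\beta}(\be_n(x))|<\beta^{-n(1+\delta)}\}$: since non-fullness is governed by a precise combinatorial condition dictated by the $\beta$-expansion of $1$, counting admissible words producing such short cylinders together with the Gibbs upper bound gives $\nu_\phi(E_{n,\delta})\le Ce^{-n\eta(\delta)}$ for some $\eta(\delta)>0$, whence summability over $n$ and Borel--Cantelli conclude the argument.
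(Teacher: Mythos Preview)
Your upper bound via Shannon--McMillan--Breiman and a covering count is correct; the paper instead bounds the lower local dimension directly by restricting to the infinitely many $n$ for which $I_{n,\beta}(x)$ is full (Lemma~\ref{l:union=[0,1]}) and invoking the two-sided Gibbs estimate there. Both routes work.

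The substantive difference is in the lower bound. You correctly isolate the obstacle --- short cylinders --- and propose a Borel--Cantelli argument for $\nu_\phi(E_{n,\delta})$. This can be made to work, but your sketch (``counting admissible words\dots together with the Gibbs upper bound'') is vague and not the efficient route: the clean way is to note $E_{n,\delta}\subset T_\beta^{-n}\bigl([0,\beta^{-\lfloor n\delta\rfloor})\bigr)$, use $T_\beta$-invariance of $\nu_\phi$, and observe that $[0,\beta^{-k})=I_{k,\beta}(0^k)$ is full, so $\nu_\phi\bigl([0,\beta^{-k})\bigr)\asymp e^{k(\phi(0)-P(\phi,T_\beta))}$, which decays since $\phi(0)<P(\phi,T_\beta)$ (otherwise $\delta_0$ would be the unique equilibrium state, contradicting the Gibbs lower bound on full cylinders).

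The paper, however, bypasses the short-cylinder issue entirely with a one-line trick: if $\beta^{-m-1}<|I_{n,\beta}(\be_n)|\le\beta^{-m}$ then $I_{n,\beta}(\be_n)=I_{m,\beta}(\be_n,0^{m-n})$, so one simply applies the Gibbs \emph{upper} bound at level $m$ rather than level $n$. Since the witness point $x$ lies in the set $A_N$ where the ergodic average is $\varepsilon$-close to $\int\phi\,d\nu_\phi$ for \emph{all} levels $\ge N$ (in particular for $m$), this gives $\nu_\phi(I_{n,\beta}(\be_n))\ll e^{-m(h_{\nu_\phi}-\varepsilon)}\asymp|I_{n,\beta}(\be_n)|^{(h_{\nu_\phi}-\varepsilon)/\log\beta}$ directly, and Proposition~\ref{p:modifiedMDP} finishes. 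No a.e.\ control of cylinder lengths is needed at all; your $\delta$-parameter and the Borel--Cantelli step simply disappear.
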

\begin{proof}
	For any $n\ge 1$, denote by $I_{n,\beta}(x)$ the cylinder of level $n$ that contains $x$. Obviously,
	\[I_{n,\beta}(x)\subset B(x,|I_{n,\beta}(x)|),\]
	and so
	\[\frac{\log\nu_\phi(B(x,|I_{n,\beta}(x)|))}{\log |I_{n,\beta}(x)|}\le \frac{\log\nu_\phi(I_{n,\beta}(x))}{\log|I_{n,\beta}(x)|}.\]
	Note that $|I_{n,\beta}(x)|$ goes to $0$ as $n\to\infty$. By the definition of local dimension,
	\[\begin{split}
		\underline{D}(\nu_\phi,x)&=\liminf_{r\to 0}\frac{\log\nu_\phi(B(x,r))}{\log r}\le\liminf_{n\to\infty}\frac{\log\nu_\phi(B(x,|I_{n,\beta}(x)|))}{\log |I_{n,\beta}(x)|}\\
		&\le\liminf_{n\to\infty}\frac{\log\nu_\phi(I_{n,\beta}(x))}{\log|I_{n,\beta}(x)|}\le \liminf_{\substack{n\to\infty\\ I_{n,\beta}(x)\text{ is full}}}\frac{\log\nu_\phi(I_{n,\beta}(x))}{\log|I_{n,\beta}(x)|},
	\end{split}\]
	where, in the last inequality, we use the fact that for any $x\in[0,1)$, there exists infinitely many $n$ such that $I_{n,\beta}(x)$ is full (see Lemma \ref{l:union=[0,1]}).

	By Birkhoff's ergodic theorem, for $\nu_\phi$-almost all $x$,
	\begin{equation}\label{eq:ergodicbeta}
		\lim_{n\to\infty}\frac{1}{n}S_n\phi(x)=\int\phi\dif\nu_\phi.
	\end{equation}
	Let $x\in[0,1]$ be such that \eqref{eq:ergodicbeta} holds. For any full cylinder $I_{n,\beta}(x)$, by the Gibbs property (see Theorem \ref{t:Gibbsbeta} (2)),
	\[\nu_\phi(I_{n,\beta}(x))\asymp e^{S_n\phi(x)-nP(\phi,T_\beta)}\stackrel{\eqref{eq:ergodicbeta}}{=}e^{n(\int\phi\dif\nu_\phi-P(\phi,T_\beta)+o(1))},\]
	where $o(1)\to 0$ as $n\to\infty$. Therefore,
	\[\begin{split}
		\underline{D}(\nu_\phi,x)&\le \liminf_{\substack{n\to\infty\\ I_{n,\beta}(x)\text{ is full}}}\frac{n\big(\int\phi\dif\nu_\phi-P(\phi,T_\beta)+o(1)\big)}{\log|I_{n,\beta}(x)|}=\frac{P(\phi,T_\beta)-\int\phi\dif\nu_\phi}{\log\beta}\\
		&=\frac{h_{\nu_\phi}}{\log\beta},
	\end{split}\]
	where the equality follows from the variational principle (see Theorem \ref{t:Gibbsbeta} (1)).
	Since this holds for $\nu_\phi$-almost all $x$, we have
	\begin{equation}\label{eq:uppermudimbeta}
		\hdimu\nu_\phi\le\frac{h_{\nu_\phi}}{\log\beta}.
	\end{equation}

	Next, we prove the reverse inequality. For any set $E$ with positive $\nu_\phi$-measure, we claim that
	\begin{equation}\label{eq:claimEdim}
		\hdim E\ge \frac{h_{\nu_\phi}}{\log\beta}.
	\end{equation}
	It then follows from the definition of $\hdiml\nu_\phi$ that
	\[\hdiml\nu_\phi\ge \frac{h_{\nu_\phi}}{\log\beta},\]
	which together with \eqref{eq:uppermudimbeta} concludes the proof.

	Fix a Borel set $E$ with $\nu_\phi(E)>0$. Let $\varepsilon>0$. Then, in view of \eqref{eq:ergodicbeta}, there exists an integer $N=N(\varepsilon)$ such that the set $A_N$ of $x$ for which
	\[\bigg|\frac{1}{n}S_n\phi(x)-\int\phi\dif\nu_\phi\bigg|<\varepsilon,\quad \text{for all $n\ge N$}\]
	has $\nu_\phi$-measure larger than $1-\nu_\phi(E)$. Obviously, $A_N\cap E$ has positive $\nu_\phi$-measure.

	Let $\lambda=\nu_\phi|_{A_N\cap E}$. For any $n\ge N$ and any cylinder $I_{n,\beta}(\be_n)$, if
	\[\bigg|\frac{1}{n}S_n\phi(x)-\int\phi\dif\nu_\phi\bigg|\ge\varepsilon\]
	for all $x\in I_{n,\beta}(\be_n)$. Then,
	\[\lambda(I_{n,\beta}(\be_n))=\nu_\phi(A_N\cap E\cap I_{n,\beta}(\be_n))=0.\]
	Therefore, for any cylinder $I_{n,\beta}(\be_n)$,
	\begin{align}
			&\lambda(A_N\cap E\cap I_n(\be_n))>0\notag\\
		\Longrightarrow\quad&\bigg|\frac{1}{n}S_n\phi(x)-\int\phi\dif\nu_\phi\bigg|<\varepsilon\text{ for some $x\in I_{n,\beta}(\be_n)$}.\label{eq:positiveimplyergodicsum}
	\end{align}
	We stress that the reverse implication may not be true.
	Let $I_{n,\beta}(\be_n)$ be a cylinder with positive $\nu_\phi$-measure and let $x=x(\be_n)$ be such that  \eqref{eq:positiveimplyergodicsum} holds. Let $m\ge n$ be the unique integer satisfying
	\[\beta^{-m-1}<|I_{n,\beta}(\be_n)|\le\beta^{-m}.\]
	By Lemma \ref{l:0^kfull}, we have $I_{n,\beta}(\be_n) = I_{m,\beta}(\be_n, 0^{m-n})$. Since $x\in I_{n,\beta}(\be_n)=I_{m,\beta}(\be_n,0^{m-n})$, it follows from Theorem \ref{t:Gibbsbeta} (2) that
	\[\begin{split}
		\lambda(I_{n,\beta}(\be_n))&\le \nu_\phi(I_{m,\beta}(\be_n,0^{m-n}))\ll e^{S_m\phi(x)-mP(\phi,T_\beta)}\\
		&\stackrel{\eqref{eq:positiveimplyergodicsum}}{\le} e^{m\int\phi\dif\nu_\phi+m\varepsilon-mP(\phi,T_\beta)}
		=\beta^{-m(P(\phi,T_\beta)-\int\phi\dif\nu_\phi-\varepsilon)/\log\beta}\\
		&\asymp |I_{n,\beta}(\be_n)|^{(P(\phi,T_\beta)-\int\phi\dif\nu_\phi-\varepsilon)/\log\beta}.
	\end{split}\]
	By Proposition \ref{p:modifiedMDP},
	\[\hdim (A_N\cap E)\ge \frac{P(\phi,T_\beta)-\int\phi\dif\nu_\phi-\varepsilon}{\log\beta}=\frac{h_{\nu_\phi}-\varepsilon}{\log\beta},\]
	where the equality follows from Theorem \ref{t:Gibbsbeta} (1).
	By the arbitrariness of $\varepsilon$, the claim \eqref{eq:claimEdim} follows immediately.
\end{proof}
\subsection{Application to shrinking target problems}\label{ss:proofbeta}
Recall that $\{h_n\}$ is a sequence of Lipschitz functions with uniformly bounded Lipschitz constants, and that
\[W(T_\beta,f, \{h_n\})=\{x\in[0,1):|T_\beta^nx-h_n(x)|<e^{-S_nf(x)}\text{ for i.m. $n$}\},\]
where $f:[0,1]\to\R$ is a positive continuous function. In this section, we will prove that
\begin{equation}\label{eq:stpbeta}
	W(T_\beta,f,\{h_n\})\in\mathscr{G}^s([0,1]),
\end{equation}
where $s$ satisfies $P(-s(f+\log\beta),T_\beta)=0$.
The proof relies on the following simple but useful fact.
	\begin{lem}[\cite{WangSTPJMAA}]\label{l:length}
	Let $h:[0,1)\to[0,1)$ be a  Lipschitz function with Lipschitz constant $L\ge0$. Let $0<r<1$. For any $n$ with $ L<\beta^n $ and any sequence $\be_n\in\Sigma_\beta^n$, the set
	\[\{x\in I_{n,\beta}(\be_n):|T_\beta^nx-h(x)|<r\}\]
	is contained in a ball of radius $2r\beta^{-n}$. Moreover, if $\be_n$ is full, then it contains a ball of radius $r\beta^{-n}/2$.
\end{lem}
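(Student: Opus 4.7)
The proof rests on one rigid fact: on each cylinder $I_{n,\beta}(\be_n)$, the iterate $T_\beta^n$ is affine with slope $\beta^n$. Under the hypothesis $L<\beta^n$, this expansion dominates the Lipschitz perturbation coming from $h$, and both estimates follow by tracking the dominant linear behavior of the defining condition $|T_\beta^n x - h(x)|<r$.

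\textbf{Diameter (containment) bound.} For the first assertion, let $x,y$ both lie in
\[
S:=\{x\in I_{n,\beta}(\be_n):|T_\beta^n x-h(x)|<r\}.
\]
Two applications of the triangle inequality together with the $L$-Lipschitz property of $h$ give
\[
\beta^n|x-y|=|T_\beta^n x-T_\beta^n y|\le |T_\beta^n x-h(x)|+|h(x)-h(y)|+|T_\beta^n y-h(y)|<2r+L|x-y|,
\]
so $|x-y|<2r/(\beta^n-L)$. Since any subset of $\R$ of diameter $d$ lies in an interval of radius $d/2$, $S$ is contained in a ball of radius $r/(\beta^n-L)$, which is $\le 2r\beta^{-n}$ once $L\le\beta^n/2$; this refined bound is automatic for large $n$ in the intended application, where $\sup_n L_n<\infty$ while $\beta^n\to\infty$.

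\textbf{Existence of an interior ball (full case).} Write $I:=I_{n,\beta}(\be_n)=[a,a+\beta^{-n})$ and let $\phi(y)=a+\beta^{-n}y$ be the inverse of $T_\beta^n|_I$, which in the full case is a bijection onto $[0,1)$. The substitution $y=T_\beta^n x$ identifies $S$ with $\{y\in[0,1):|G(y)|<r\}$, where
\[
G(y):=y-h(\phi(y)).
\]
Since $h\circ\phi$ is Lipschitz with constant at most $L\beta^{-n}<1$, the difference quotients of $G$ lie in $[\,1-L\beta^{-n},\,1+L\beta^{-n}\,]$, so $G$ is strictly increasing. The plan is to show that the set $\{y\in[0,1):|G(y)|<r\}$ has $y$-length at least $r$; pulling back by $\phi$ (which contracts by $\beta^{-n}$) then produces an interval in $I$ of length $\ge r\beta^{-n}$, which contains a ball of radius $r\beta^{-n}/2$.

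\textbf{Main obstacle.} The delicate step is the boundary analysis of $G$ on $[0,1)$: the endpoints of $\{|G|<r\}$ may be clipped by $0$ or $1$, and one must verify that the resulting length is at least $r$ in every case. Assuming $h$ takes values in $[0,1]$, one has $G(0)=-h(a)\le 0$ and $G(1)=1-h(a+\beta^{-n})\ge 0$. In the clean case $G(0)\le -r$ and $G(1)\ge r$, the upper Lipschitz bound of $G$ alone forces
\[
G^{-1}(r)-G^{-1}(-r)\ge \frac{2r}{1+L\beta^{-n}}>r,
\]
the strict inequality using precisely $L<\beta^n$. In the clipped case $G(0)>-r$ (so the interval starts at $0$), the identity $G^{-1}(r)=r+h(\phi(G^{-1}(r)))\ge r$ supplies the required length $r$ (truncated at $1$ if necessary); the symmetric case $G(1)<r$ is handled identically. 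Finally, if both $G(0)>-r$ and $G(1)<r$, monotonicity forces $|G|<r$ throughout $[0,1)$ and the conclusion is trivial. This case analysis, which uses the hypothesis $L<\beta^n$ in the sharp form $L\beta^{-n}<1$, is the only nontrivial step.
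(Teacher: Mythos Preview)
The paper does not prove this lemma; it is quoted from \cite{WangSTPJMAA} with no argument given, so there is nothing to compare your approach against. Your proof is correct in substance: the affine structure of $T_\beta^n$ on a cylinder plus the domination $L<\beta^n$ is exactly the mechanism, and your monotone-function analysis of $G(y)=y-h(\phi(y))$ for the full case is a clean way to handle the interior ball.

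Two minor remarks. First, as you already note, the containment radius $2r\beta^{-n}$ genuinely requires $L\le\beta^n/2$ rather than merely $L<\beta^n$; this is a cosmetic looseness in the lemma as stated, not a flaw in your argument, and is immaterial in the application since the Lipschitz constants are uniformly bounded while $\beta^n\to\infty$. Second, your boundary analysis in the full case uses $0\le h\le 1$, which is not written into the hypothesis but is implicit in the shrinking-target context (the functions $h_n$ are targets in $[0,1)$); it would be worth stating this explicitly.
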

Let us turn to the proof of \eqref{eq:stpbeta}.
\begin{proof}[Proof of \eqref{eq:stpbeta}]
	Note that Lipschitz functions are dense in $C^0([0,1])$ and that
	\[W(T_\beta,g,\{h_n\})\subset W(T_\beta,f,\{h_n\}) \quad\text{whenever $g\ge f$}.\]
	Since the pressure function $P(\cdot,T_\beta)$ is continuous (see Proposition \ref{p:continuous beta}), we can choose a Lipschitz function $g$ so that the Hausdorff dimension of $W(T_\beta,g,\{h_n\})$ is as close as we want to that of $W(T_\beta,f,\{h_n\})$, provided \eqref{eq:stpbeta} holds. Therefore, in what follows, we may assume that $f$ is Lipschitz.

	Let $\nu_s$ be the Gibbs measure associated to the Lipschitz function $-s(f+\log\beta)$. Since the concatenation of any two full sequences is still full (see Proposition \ref{p:concatenation}), by the Gibbs property of $\nu_s$ (see Theorem \ref{t:Gibbsbeta}), it is not difficult to verify that $\nu_s$ is quasi-self-conformal with respect to the collection of full cylinders. Moreover, by Lemma \ref{l:union=[0,1]}, the $\limsup$ set defined by the collection of full cylinders has full Lebesgue measure. Therefore, Theorem \ref{t:MTP} can be applied to $\nu_s$.

	Birkhoff's ergodic theorem gives that for $\nu_s$-almost all $x$,
	\[\lim_{n\to\infty}\frac{1}{n}S_n f(x)=\int f\dif\nu_s.\]
	Since for any $x\in[0,1)$, there exist infinitely many $n$ such that $I_{n,\beta}(x)$ is full, we have that the set
	\[\bigcap_{N=1}^\infty\bigcup_{n=N}^\infty \bigcup_{\be_n\in\Lambda_\beta^n(\nu_s,\varepsilon)}I_{n,\beta}(\be_n)\]
	is of full $\nu_s$-measure, where recall that $\Lambda_\beta^n$ is the set of full sequences of length $n$, and
	\[\Lambda_\beta^n(\nu_s,\varepsilon):=\bigg\{\be_n\in\Lambda_\beta^n:\bigg|\frac{1}{n}S_nf(x)-\int f\dif\nu_s\bigg|<\varepsilon\text{ for all $x\in I_{n,\beta}(\be_n)$}\bigg\}.\]
	Then,
	\[\begin{split}
		W(T_\beta,f,\{h_n\})\supset\bigcap_{N=1}^\infty\bigcup_{n=N}^\infty \bigcup_{\be_n\in\Lambda_\beta^n(\nu_s,\varepsilon)}I_{n,\beta}(\be_n)\cap E_n(T_\beta,f,h_n),
	\end{split}\]
	where $E_n(T_\beta,f,h_n)=\{x\in[0,1):|T_\beta^nx-h_n(x)|<e^{-S_nf(x)}\}$. By the Lipschitz continuity of $f$, for any $x,y\in I_{n,\beta}(\be_n)$,
	\[|S_nf(x)-S_nf(y)|\ll 1\quad\Longrightarrow\quad e^{-S_nf(x)}\asymp e^{-S_nf(y)},\]
	where the implied constant is absolute.
	Therefore, since $\be_n\in \Lambda_\beta^n(\nu_s,\varepsilon)$ is full, we can apply Lemma \ref{l:length} to conclude that $I_{n,\beta}(\be_n)\cap E_n(T_\beta,f,h_n)$ contains an interval of length
	\begin{equation}\label{eq:contain an interval beta}
		\asymp  \beta^{-n}e^{-S_nf(x)}\ge\beta^{-n}e^{-n\int f\dif\nu_s-n\varepsilon}=\beta^{-n(1+(\int f\dif\nu_s+\varepsilon)/\log\beta)}.
	\end{equation}
	By the variational principle (see Theorem \ref{t:Gibbsbeta} (1)) and note that $P(-s(f+\log\beta),T_\beta)=0$, we have
	\[0=h_{\nu_s}-s\bigg(\int f\dif\nu_s+\log\beta\bigg)\quad\Longrightarrow\quad s=\frac{h_{\nu_s}}{\int f\dif\nu_s+\log\beta}.\]
Let $K=\frac{h_{\nu_s}}{(\int f\dif\nu_s+\log\beta)\log\beta}$. Then,
		\begin{align}
			&\bigg(1+\frac{\int f\dif\nu_s+\varepsilon}{\log\beta}\bigg)\bigg(\frac{h_{\nu_s}}{\int f\dif\nu_s+\log\beta}-K\varepsilon\bigg)\notag\\
			=& \frac{h_{\nu_s}}{\log\beta}+\frac{\varepsilon h_{\nu_s}}{(\int f\dif\nu_s+\log\beta)\log\beta}-\bigg(1+\frac{\int f\dif\nu_s+\varepsilon}{\log\beta}\bigg)\cdot K\varepsilon\notag\\
			\le& \frac{h_{\nu_s}}{\log\beta}+\frac{\varepsilon h_{\nu_s}}{(\int f\dif\nu_s+\log\beta)\log\beta}-K\varepsilon=\frac{h_{v_s}}{\log\beta}.\label{eq:s-ke}
		\end{align}
	This together with Lemma \ref{l:dimGibbsbeta} yields
	\[\begin{split}
		\hc^{s-K\varepsilon}\big(I_{n,\beta}(\be_n)\cap E_n(T_\beta,f,h_n)\big)&\stackrel{\eqref{eq:contain an interval beta}}{\asymp} \beta^{-n(1+(\int f\dif\nu_s+\varepsilon)/\log\beta)(s-K\varepsilon)}\\
		&\stackrel{\eqref{eq:s-ke}}{\ge} \beta^{-nh_{\nu_s}/\log\beta}=|I_{n,\beta}(\be_n)|^{-\hdim\nu_s}.
	\end{split}\]
	Therefore, by Theorem \ref{t:MTP}, we have
	\[W(T_\beta,f, \{h_n\})\in\mathscr{G}^{s-K\varepsilon}([0,1]).\]
	Since $\varepsilon$ is arbitrary and $K$ does not depend on $\varepsilon$,
		\[W(T_\beta,f, \{h_n\})\in\mathscr{G}^{s}([0,1]).\qedhere\]
\end{proof}
\begin{rem}\label{r:applicable to Markov}
	In order to highlight further applications, such as to expanding Markov maps with finite partitions on $[0,1]$, of Theorem \ref{t:MTP}, we  summarize the properties used in the proof of \eqref{eq:stpbeta} as follows:
	\begin{enumerate}
		\item There exists $s\ge 0$ such that $P(-s(f+\log\beta),T_\beta)=0$.
		\item For the pressure function $-s(f+\log\beta)$ given in item (1), there exists a Gibbs measure $\nu_s$ verifies the variational principle
		\begin{equation}\label{eq:variational principle}
			0=P(-s(f+\log\beta), T_\beta) = h_{\nu_s} + \int -s(f+\log\beta)\, d\nu_s.
		\end{equation}
		The Gibbs property of $\nu_s$ ensures that this measure is quasi-self-conformal with respect to the collection of all cylinders.
		\item The dimension of $\nu_s$ satisfies
		\[\hdim\nu_s=\frac{ h_{\nu_s} }{\log\beta}\stackrel{\eqref{eq:variational principle}}{=}\frac{ \int s(f+\log\beta)\, d\nu_s }{\int\log|T_\beta'|\, d\nu_s},\]
		where $T_\beta'$ denotes the derivative of $T_\beta$.
	\end{enumerate}
	These results are quite standard in the study of multifractal analysis, especially for expanding Markov maps with finite partitions on $[0,1]$. For a summary, see, for example, \cite[\S 2.3]{LiaoSeuretETDS}. Although the content described in that reference differs from our paper, it does indeed lead to items (1)--(3), with $T_\beta$ and $\log \beta$ replaced by the Markov map $T$ and the logarithm of its derivative $\log|T'|$, respectively. To keep the paper at a manageable length, we leave the verification of these details to the interested reader.
\end{rem}
\section{Applications to Gauss map}\label{s:Gauss}
\subsection{Definition and some basic properties}
The Gauss map $G:[0,1)\to[0,1)$ is defined by
\[G(0):=0\qaq G(x)=\frac{1}{x}\ (\textrm{mod}\ 1)\quad\text{for $x\in(0,1)$}.\]
It is well-known that every irrational $x\in(0,1)$ can be written uniquely as an infinite expansion of the form
\begin{equation}\label{eq:cf}
	x=\frac{1}{a_1(x)+\cfrac{1}{a_2(x)+\cfrac{1}{a_3(x)+\cdots}}}=:[a_1(x),a_2(x),a_3(x),\dots],
\end{equation}
where $a_1(x)=\lfloor 1/x\rfloor$ and $a_n(x)=a_1(G^{n-1}x)$ for $n\ge 2$ are called the {\em partial quotients} of $ x $. The $n$th truncation $[a_1(x),\dots,a_n(x)]$, denoted by $p_n(x)/q_n(x)$ is called the $n$th convengent of $x$. With the convention
\[p_{-1}=1,\quad q_{-1}=0,\quad p_0=0\quad\text{and}\quad q_0=1,\]
the convergents $ \{p_n/q_n\}=\{p_n(x)/q_n(x)\} $ of $ x $ can be generated by the recursive formulae:
\begin{equation}\label{eq:recursive}
	p_n=a_np_{n-1}+p_{n-2}\quad\text{and}\quad q_n=a_nq_{n-1}+q_{n-2}\qquad\text{for $ n\ge 1 $}.
\end{equation}
These expressions show that both $p_n$ and $q_n$ are completely determined by the initial segment $\bm{a}_n := (a_1, \dots, a_n) \in \mathbb{N}^n$ of partial quotients. We therefore write
\[p(\bm a_n)=p_n\quad\text{and}\quad q(\bm a_n)=q_n.\]
By the recursive formulae \eqref{eq:recursive}, for any $(a_1,\dots,a_n)\in\N^n$,
\begin{equation}\label{eq:consequence of recursive}
	q(a_1,\dots,a_{n-1})a_n \le q(a_1,\dots,a_n)\le q(a_1,\dots,a_{n-1})(a_n+1).
\end{equation}
\begin{defn}
	For any $ \bm a_n\in\N^n $, we call
	\[I_n(\bm a_n):=\{x\in[0,1):a_k(x)=a_k,1\le k\le n\}\]
	an $ n $th level cylinder.
\end{defn}
Geometrically, these cylinders form a nested partition of the unit interval, refining as $n$ increases. The length of each cylinder decays exponentially with $n$ and can be precisely estimated in terms of the denominators $q_n$ of the convergents:
\begin{lem}[\cite{Khintchinebook,IosifescuKraaikampbook}]\label{l:lengthcylinderGauss}
	Let $\bm a_n\in \N^n$. Then the corresponding $n$th level cylinder satisfies the bounds
	\[q(\bm a_n)^{-2}/2<|I_n(\bm a_n)|\le q(\bm a_n)^{-2},\]
	and moreover,
	\[|I_n(\bm a_n)|\asymp e^{-S_n\log |G'|(x)}\]
	where $x$ belongs to the interior of $I_n(\bm a_n)$.
\end{lem}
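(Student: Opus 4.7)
The plan is to work with the explicit parametrization of cylinders via convergents. Every $x$ whose continued fraction expansion begins with $(a_1,\dots,a_n)$ can be written as $x = [a_1,\dots,a_n,t]$ for a unique $t \in [1,\infty]$, and by induction on the recurrences \eqref{eq:recursive} one derives the closed form
\[
x = \frac{t\,p(\bm a_n) + p(\bm a_{n-1})}{t\,q(\bm a_n) + q(\bm a_{n-1})},
\]
where I abbreviate $p_n = p(\bm a_n)$ and $q_n = q(\bm a_n)$. As $t$ ranges monotonically over $[1,\infty]$, this traces out the whole cylinder $I_n(\bm a_n)$, whose endpoints are $(p_n+p_{n-1})/(q_n+q_{n-1})$ at $t=1$ and $p_n/q_n$ at $t=\infty$. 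Using the classical determinant identity $p_n q_{n-1} - p_{n-1}q_n = (-1)^{n-1}$, the length collapses to
\[
|I_n(\bm a_n)| = \left|\frac{p_n+p_{n-1}}{q_n+q_{n-1}} - \frac{p_n}{q_n}\right| = \frac{1}{q_n(q_n+q_{n-1})}.
\]
Since $0 \le q_{n-1} < q_n$ for $n\ge 1$, this quantity lies strictly between $1/(2q_n^2)$ and $1/q_n^2$, which gives the first stated bound.

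For the asymptotic involving $|G'|$, the plan is to compute $(G^n)'$ directly from this parametrization. Under $x = (tp_n+p_{n-1})/(tq_n+q_{n-1})$ one has $G^n(x) = 1/t$, so differentiating and applying the chain rule yields
\[
|(G^n)'(x)| = (q_n + G^n(x)\, q_{n-1})^2.
\]
For $x$ in the interior of $I_n(\bm a_n)$, $G^n(x) \in (0,1)$, so this quantity lies between $q_n^2$ and $(q_n+q_{n-1})^2 < 4 q_n^2$; in particular $|(G^n)'(x)| \asymp q_n^2$ throughout the interior. Combined with the chain rule identity $|(G^n)'(x)| = \prod_{k=0}^{n-1}|G'(G^k x)| = e^{S_n \log|G'|(x)}$ and the length estimate $|I_n(\bm a_n)| \asymp q_n^{-2}$ from the first part, this produces
\[
|I_n(\bm a_n)| \asymp \frac{1}{|(G^n)'(x)|} = e^{-S_n \log|G'|(x)},
\]
as required.

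I do not expect a serious obstacle: the whole argument hinges on two classical ingredients, the M\"obius parametrization of cylinders via convergents and the determinant identity $p_n q_{n-1} - p_{n-1}q_n = (-1)^{n-1}$, both of which follow by induction from \eqref{eq:recursive}. The only minor point of care is tracking the orientation of the parametrization (which depends on the parity of $n$) when identifying the endpoints of $I_n(\bm a_n)$; this has no effect on the length or on $|(G^n)'|$. Notably, this direct computation also gives the bounded-distortion property of $G^n$ on cylinders for free, bypassing any need to invoke a separate R\'enyi-type estimate.
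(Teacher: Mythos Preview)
Your argument is correct and is essentially the standard proof found in the cited references. The paper itself does not prove this lemma; it is quoted from Khintchine's and Iosifescu--Kraaikamp's books, so there is no in-paper proof to compare against. Your M\"obius parametrization via convergents, the determinant identity, and the resulting formula $|I_n(\bm a_n)| = 1/\bigl(q_n(q_n+q_{n-1})\bigr)$ are exactly how these references establish the bounds, and your derivative computation $|(G^n)'(x)| = (q_n + G^n(x)\,q_{n-1})^2$ is likewise the usual route to the second assertion.

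One microscopic remark: the strict lower bound $q_n^{-2}/2 < |I_n(\bm a_n)|$ requires $q_n > q_{n-1}$, which holds for all $n\ge 2$ but can fail at $n=1$ when $a_1=1$ (then $q_1=q_0=1$ and $|I_1|=1/2$ exactly). This is a boundary artifact of the stated inequality rather than a defect in your reasoning, and it is harmless for every use made of the lemma in the paper.
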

The next proposition describes the positions of cylinders of level $n+1$ inside the $n$th level cylinder.

\begin{prop}[\cite{Khintchinebook}] \label{p:distributioncylinder}
	Let $I_n (\bm a_n)$ be an $n$th level cylinder, which is partitioned into sub-cylinders $\{I_{n+1}(\bm a_n, a_{n+1}): a_{n+1} \in \mathbb{N}\}$. When $n$ is odd, these sub-cylinders are positioned from left to right, as $a_{n+1}$ increases from $1$ to $\infty$; when $n$ is even, they are positioned from right to left.
\end{prop}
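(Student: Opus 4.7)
The plan is to parametrize the points of $I_n(\bm a_n)$ by a single real variable and then read off the ordering of the sub-cylinders from the monotonicity of that parametrization. Every $x \in I_n(\bm a_n)$ has a continued fraction expansion of the form $x = [a_1, \ldots, a_n, a_{n+1}, a_{n+2}, \ldots]$, so setting $y := [a_{n+1}, a_{n+2}, \ldots] \in [1, \infty)$ we may write $x = [a_1, \ldots, a_n, y]$. A straightforward induction using the recursion \eqref{eq:recursive} yields the closed form
\[
x \;=\; \phi(y) \;:=\; \frac{p_n\, y + p_{n-1}}{q_n\, y + q_{n-1}},
\]
where $p_k = p(\bm a_k)$ and $q_k = q(\bm a_k)$. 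Since $a_{n+1} = \lfloor y \rfloor$, the sub-cylinder $I_{n+1}(\bm a_n, a_{n+1})$ is exactly the image $\phi([a_{n+1}, a_{n+1}+1])$ inside $I_n(\bm a_n)$.

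The crux is to determine the sign of $\phi'(y)$. A direct differentiation gives
\[
\phi'(y) \;=\; \frac{p_n q_{n-1} - p_{n-1} q_n}{(q_n y + q_{n-1})^2}.
\]
Next I invoke the classical determinant identity
\[
p_n q_{n-1} - p_{n-1} q_n \;=\; (-1)^{n-1},
\]
which follows by a one-line induction from \eqref{eq:recursive} together with the initial values $p_{-1} = 1$, $q_{-1} = 0$, $p_0 = 0$, $q_0 = 1$. Consequently, $\phi$ is strictly increasing on $[1,\infty)$ when $n$ is odd and strictly decreasing there when $n$ is even.

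The conclusion is then immediate. As $a_{n+1}$ ranges over $1, 2, 3, \ldots$, the intervals $[a_{n+1}, a_{n+1}+1]$ tile $[1,\infty)$ and march monotonically to the right. Applying the order-preserving (resp.\ order-reversing) homeomorphism $\phi$ sends this monotone sequence of intervals to a family of closed intervals which tile $I_n(\bm a_n)$ and are arranged from left to right when $n$ is odd, and from right to left when $n$ is even, exactly as claimed. No serious obstacle is anticipated; the entire argument reduces to the Möbius form of $\phi$ and the standard Wronskian identity for continued-fraction convergents.
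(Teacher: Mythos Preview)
The paper does not prove this proposition; it merely cites it from Khintchine's book. Your argument is correct and is essentially the classical proof: express $x$ via the M\"obius map $\phi(y)=(p_n y+p_{n-1})/(q_n y+q_{n-1})$, compute $\phi'(y)=(-1)^{n-1}/(q_n y+q_{n-1})^2$ using the determinant identity, and read off the orientation.

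One notational quibble: in the paper's convention, $[a_{n+1},a_{n+2},\dots]$ denotes a number in $(0,1)$, not in $[1,\infty)$. What you actually want is $y=1/G^n(x)=a_{n+1}+[a_{n+2},a_{n+3},\dots]$, which does lie in $[1,\infty)$ and for which both the formula $x=\phi(y)$ and the identification $a_{n+1}=\lfloor y\rfloor$ hold. With that adjustment the proof is complete.
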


\subsection{Pressure function}
For any $n\in\N$ and any function $\phi:[0,1)\to\R$, the $n$th variation of $\phi$ is defined by
\[\var_n(\phi):=\sup\{|\phi(x)-\phi(y)|:I_n(x)=I_n(y)\},\]
where $I_n(x)$ denotes the $n$th level cylinder containing $x$. Let $\phi:[0,1)\to\R$ be a real function (not necessarily continuous) with $\var_1(\phi)<\infty$ and $\var_n(\phi)\to 0$ as $n\to\infty$.
The pressure function for Gauss map associated to $\phi$ is defined as
\begin{equation}\label{eq:pressurelimitGauss}
	P(\phi,G):=\lim_{n\to\infty}\frac{1}{n}\log\sum_{\bm a_n\in\N^n}e^{S_n\phi(y)},
\end{equation}
where $y\in I_{n}(\bm a_n)$. The proof of the existence of limit in \eqref{eq:pressurelimitGauss}  can be found in \cite[Proposition 2.4]{LiWangWuXuSTPPLMS}. Clearly, if $\phi$ is either $\log|G'|$ or a continuous function on $[0,1]$, then $\var_1(\phi)<\infty$ and $\var_n(\phi)\to 0$ as $n\to\infty$. Therefore, the pressure for such $\phi$ exists.

 Compared with the $\beta$-transformations, one major difference is that there are infinitely many $n$th level cylinders. As a result, the summation in \eqref{eq:pressurelimitGauss} may be infinite, and hence the pressure function may fail to be continuous with respect to $\phi$. For this reason, instead of providing a comprehensive but technically involved description of the pressure function for the Gauss map, we merely summarize part of the results from \cite{WangWuCFadv,MauldinUrbanskiPLMSIFS,HanusMauldinUrbanskimultifractalAMH} and refer the reader to these references for further details.
\begin{lem}[{\cite[Proposition 3.3]{MauldinUrbanskiPLMSIFS} and \cite[Lemma 2.6]{WangWuCFadv}}]\label{l:continuous function Gauss}
	The function $t\mapsto P(-t\log|G'|,G)$ is strictly decreasing, convex and continuous on $(1/2,\infty)$ and satisfies
	\[\lim_{t\to 1/2^+}P(-t\log|G'|,G)=\infty\qaq P(-\log|G'|,G)=0.\]
\end{lem}
\begin{proof}
	Note that the continued fraction dynamical system can be viewed as an iterated function system:
	\[S=\bigg\{\phi_i(x)=\frac{1}{i+x}:[0,1]\to[0,1]:i\in\N\bigg\}.\]
	It then follows from \cite[Proposition 3.3]{MauldinUrbanskiPLMSIFS} that the pressure function $P(-t\log|G'|,G)$ is strictly decreasing, convex and continuous on $(\theta_S,\infty)$ with
	\[\theta_S:=\inf\{t:P(-t\log|G'|,G)<\infty\}.\]
	For any $n\in \N$, by Lemma \ref{l:lengthcylinderGauss}
	\[\sum_{\bm a_n\in\N^n}e^{-tS_n\log|G'|(y)}\asymp\sum_{\bm a_n\in\N^n}|I_n(\bm a_n)|^t.\]
	Hence, if $t=1$, then by the definition of cylinders, the above sum is approximately $1$, and so $P(-\log|G'|,G)=0$. Now, suppose that $t<1$. Then, by Lemma \ref{l:lengthcylinderGauss} and \eqref{eq:consequence of recursive},
	\[\sum_{\bm a_n\in\N^n}|I_n(\bm a_n)|^t\asymp\sum_{\bm a_n\in\N^n}q(\bm a_n)^{-2t}\le \sum_{\bm a_n\in\N^n}\bigg(\frac{1}{a_1\cdots a_n}\bigg)^{2t}=\bigg(\sum_{k=1}^{\infty}\frac{1}{k^{2t}}\bigg)^n\]
	and similarly,
		\[\sum_{\bm a_n\in\N^n}|I_n(\bm a_n)|^t\gg\bigg(\sum_{k=1}^{\infty}\frac{1}{(k+1)^{2t}}\bigg)^n.\]
		This immediately implies that $P(-t\log|G'|,G)<\infty$ if and only if $t>1/2$. Therefore, $\theta_S=1/2$ and the lemma follows.
\end{proof}

\begin{thm}[{\cite{MauldinUrbanskiPLMSIFS,HanusMauldinUrbanskimultifractalAMH}}]\label{t:GibbsGauss}
	Let $f:[0,1] \to \mathbb{R}$ be a positive Lipschitz continuous function. Then the function $t\mapsto P(-t(f+\log|G'|),G)$ is continuous on $(1/2,+\infty)$. Moreover, the following statements hold:
	\begin{enumerate}[\upshape(1)]
		\item  there exists $t = t(f) \in (1/2, 1)$ such that
		\[P(-t(f+\log|G'|),G)=0.\]
		\item for the function $-t(f + \log |G'|)$, there exists a unique equilibrium state $\nu_t$ satisfying
		\begin{equation}\label{eq:variational principle Gauss}
			0=h_{\nu_t}-t\int (f+\log|G'|)\dif\nu_t,
		\end{equation}
		and such that, for any $\bm a_n \in \mathbb{N}^n$ and any $x \in I_n(\bm a_n)$,
		\[\nu_t(I_n(\bm a_n))\asymp e^{-ntS_n(f+\log|G'|)(x)}.\]
		\item the Hausdorff dimension of $\nu_t$ satisfies
		\begin{equation}\label{eq:dimGibbsGauss}
			\hdim\nu_t=\frac{h_{\nu_t}}{\int\log|G'|\dif\nu_t}.
		\end{equation}
	\end{enumerate}
\end{thm}
\begin{proof}[Sketch of the proof]
(1) Observe that
	\[\begin{split}
		P(-t\log|G'|,G)-t\|f\|_\infty&\le P(-t(f+\log|G'|),G)\\
		&\le P(-t\log|G'|,G)-t\min_{x\in[0,1]}f(x).
	\end{split}\]
	By Lemma \ref{l:continuous function Gauss}, we have
	\[\lim_{t\to 1/2^+}P(-t(f+\log|G'|),G)\ge\lim_{t\to 1/2^+}P(-t\log|G'|,G)-t\|f\|_\infty=\infty\]
	and
	\[P(-(f+\log|G'|),G)\le P(-\log|G'|,G)-\min_{x\in[0,1]}f(x)\le 0.\]
	Since the pressure function is strictly decreasing and continuous in $t$ (see Lemma \ref{l:continuous function Gauss} again), there must exist a unique $t \in (1/2, 1)$ such that
	\[
	P\left(-t(f + \log|G'|), G\right) = 0.
	\]

	(2) The Lipschitz continuity and positivity of $f$ together ensures the existence and uniqueness of the equilibrium state associated with the potential $-t(f + \log|G'|)$; see \cite[Theorem 2.16]{HanusMauldinUrbanskimultifractalAMH}. The variational principle \eqref{eq:variational principle Gauss} follows from their definition of the equilibrium state (see \cite[(2.19)]{HanusMauldinUrbanskimultifractalAMH}) directly. In addition, this equilibrium state satisfies the corresponding Gibbs property; see \cite[(2.16')]{HanusMauldinUrbanskimultifractalAMH}.

	(3) This is a consequence of Birkhoff's ergodic theorem and \cite[Lemma 2.12 (b)]{FanLiaoWangWuETDS}.
\end{proof}

\subsection{Applications}
Note that for any $\bm a_n \in \mathbb{N}^n$, it is known (see, e.g., \cite[Lemma 2.5]{JordanSahlsten16}) that
\[G^n|_{I_n(\bm a_n)}=[0,1)\qaq |(G^n)'(x)|\asymp q(\bm a_n)^2,\]
where $x\in I_n(\bm a_n)$.
The Gibbs property (see Theorem \ref{t:GibbsGauss}) further implies that $\nu_t$ is quasi-self-conformal with respect to the collection of all cylinders.
By a suitable arrangement, it is easy to verify that the $\limsup$ set defined by the collection of cylinders has full Lebesgue measure.
This key observation enables us to apply Theorem~\ref{t:MTP} to the measure $\nu_t$. In light of Theorem~\ref{t:GibbsGauss}, we conclude --- by arguments similar to those used in the proof of the large intersection property of $W(T_\beta,f,\{h_n\})$ (see \S\ref{ss:proofbeta} and Remark \ref{r:applicable to Markov})--- that for any positive continuous function $f:[0,1]\to\mathbb{R}$,
	\[W(G,f, \{h_n\})\in\mathscr G^t([0,1]),\]
where $t$ solves the pressure equation $P(-t(f+\log|G'|),G)=0$.

 We now turn to another class of sets defined in terms of growth conditions on blocks of consecutive partial quotients.
 Recall that for any integer $m \ge 1$ and real number $B > 1$, we define
\[F_m(B):=\{x\in[0,1): a_{n+1}(x)\cdots a_{n+m}(x)\ge B^n\text{ for i.m. $n$}\}.\]
Our goal in the remainder of this subsection is to prove that
\begin{equation}\label{eq:lipconsecutive}
	F_m(B) \in \mathscr{G}^u([0,1]),
\end{equation}
for some $u \in (1/2, 1)$ satisfying \begin{equation}\label{eq:pressure0FmB}
	P(-u \log|G'| - g_m(u) \log B, G) = 0,
\end{equation}
where the function $g_m(u)$ is given by
\[g_m(u)=\frac{u^m(2u-1)}{u^m-(1-u)^m}.\]

The existence of $u$ satisfying \eqref{eq:pressure0FmB} is ensured by the following lemma, which follows from standard properties of the pressure function.
\begin{lem}
	Let $m \ge 1$ be an integer and $B > 1$. There exists $1/2<u<1$ such that
	\[P(-u\log|G'|-g_m(u)\log B,G)=0.\]
\end{lem}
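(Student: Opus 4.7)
The plan is to exploit the constancy of $g_m(u)\log B$ (in $x$) to reduce the two-parameter pressure equation to a one-variable continuity-and-IVT argument. Specifically, I would first observe that since $g_m(u)\log B$ is a constant function of $x$, the standard translation property of the pressure yields
\[
P\bigl(-u\log|G'|-g_m(u)\log B,\, G\bigr)=P(-u\log|G'|,G)-g_m(u)\log B.
\]
Denote the right-hand side by $\Phi(u)$. The goal then becomes showing that $\Phi$ has a zero in $(1/2,1)$.

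Next I would analyze the behavior of each term in $\Phi$ on $(1/2,1]$. For the first term, Theorem~\ref{t:GibbsGauss} (and the discussion in its sketch, citing \cite{MauldinUrbanskiPLMSIFS,WangWuCFadv}) gives that $u\mapsto P(-u\log|G'|,G)$ is continuous on $(1/2,\infty)$, tends to $+\infty$ as $u\to 1/2^{+}$, and equals $0$ at $u=1$ (the classical Bowen identity for the Gauss map). For the second term, I would verify that $g_m$ is continuous on $(1/2,1]$: its denominator $u^m-(1-u)^m$ vanishes only at $u=1/2$, so the only concern is the boundary, where a direct computation (L'Hôpital, since numerator and denominator both vanish) gives
\[
\lim_{u\to 1/2^{+}}g_m(u)=\frac{2\cdot(1/2)^m}{m\cdot 2\cdot(1/2)^{m-1}}=\frac{1}{2m},
\]
which is finite, while $g_m(1)=1$. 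Hence $g_m(u)\log B$ is continuous and bounded on $(1/2,1]$.

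Combining these ingredients, $\Phi$ is continuous on $(1/2,1]$ with
\[
\lim_{u\to 1/2^{+}}\Phi(u)=+\infty,\qquad \Phi(1)=0-1\cdot\log B=-\log B<0,
\]
the last inequality because $B>1$. The intermediate value theorem then produces some $u\in(1/2,1)$ with $\Phi(u)=0$, completing the proof.

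I do not expect any serious obstacle here; the only subtle point is justifying that $\lim_{u\to 1/2^{+}}P(-u\log|G'|,G)=+\infty$, but this is already cited in the sketch of Theorem~\ref{t:GibbsGauss} from \cite[Lemma~2.6]{WangWuCFadv}, and everything else is a routine continuity/IVT argument together with the computation of $g_m$ at the two endpoints.
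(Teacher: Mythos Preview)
Your proposal is correct and follows essentially the same route as the paper: reduce to $\Phi(u)=P(-u\log|G'|,G)-g_m(u)\log B$ via the translation property of pressure, then apply the intermediate value theorem using $\Phi(u)\to+\infty$ as $u\to1/2^{+}$ and $\Phi(1)=-\log B<0$. Your treatment of the limit $\lim_{u\to1/2^{+}}g_m(u)=1/(2m)$ is in fact more careful than the paper's (which asserts $g_m(1/2)\log B=0$, overlooking the $0/0$ indeterminate form), though since only boundedness of $g_m$ near $1/2$ is required, both arguments go through.
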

\begin{proof}
	By the definition of the pressure function, we can write
	\[P(-u\log|G'|-g_m(u)\log B,G)=P(-u\log|G'|,G)-g_m(u)\log B.\]
	Let us now consider the two functions of $u$ appearing on the right-hand-side.
	On the one hand, by Lemma \ref{l:continuous function Gauss},
	\[\lim_{u\to 1/2^+}P(-u\log|G'|,G)=\infty,\quad \text{while}\quad P(-\log|G'|,G)=0.\]
	On the other hand, note that $g_m(u)$ is continuous on $(1/2, 1)$, and satisfies
	\[g_m(1/2)\log B=0,\quad\text{and}\quad g_m(1)\log B=\log B>0.\]
	Combining this with the continuity of both functions, it follows that the function
	$u \mapsto P(-u \log|G'|, G) - g_m(u) \log B$
	is continuous and takes values $\infty$ near $u = 1/2$ and negative near $u = 1$. By the intermediate value theorem, there exists some $u \in (1/2, 1)$ such that the equation stated in the lemma equals zero.
\end{proof}
Let $u$ be as in Theorem \ref{t:consecutive}. Denote by $\nu_u$ the Gibbs measure associated to $-u\log|G'|-g_m(u)\log B$. By the dimension formula \eqref{eq:dimGibbsGauss}, we have
\[\hdim\nu_u=\frac{h_{\nu_u}}{\int\log|G'|\dif\nu_u}.\]
By Birkhoff's ergodic theorem, for $\nu_u$-almost all $x$,
\[\lim_{n\to\infty}\frac{1}{n}S_n \log|G'|(x)=\int \log|G'|\dif\nu_u.\]
For any $0<\varepsilon<\int\log|G'|\dif\nu_u/2$, it is not difficult to verify that the set
\begin{equation}\label{eq:ve<}
	\bigcap_{N=1}^\infty\bigcup_{n=N}^\infty \bigcup_{\bm a_n\in\Gamma_n(\nu_u,\varepsilon)}I_n(\bm a_n)
\end{equation}
is of full $\nu_u$-measure, where
\[\Gamma_n(\nu_u,\varepsilon):=\bigg\{\bm a_n\in\N^n:\bigg|\frac{1}{n}S_n \log|G'|(x)-\int \log|G'|\dif\nu_u\bigg|<\varepsilon\text{ for all $x\in I_n(\bm a_n)$}\bigg\}.\]
Consequently, we obtain the inclusion
\[\begin{split}
	F_m(B)\supset\bigcap_{N=1}^\infty\bigcup_{n=N}^\infty \bigcup_{\bm a_n\in\Gamma_n(\nu_u,\varepsilon)}\{x\in I_n(\bm a_n): a_{n+1}(x)\cdots a_{n+m}(x)\ge B^n\}.
\end{split}\]
Let
\[\alpha_i=B^{g_m(u)(1-u)^{i-1}u^{-i}}\quad\text{for $1\le i\le m-1$},\quad\text{and}\quad \alpha_m=\frac{B}{\alpha_1\cdots\alpha_{m-1}}.\]
It can be deduced from the expression of $ g_m(u) $ that the following equalities hold:
\begin{equation}\label{eq:alphai=}
	\alpha_1^u=\alpha_1^{2u-1}\alpha_2^u=\cdots=(\alpha_1\cdots\alpha_{m-1})^{2u-1}\alpha_m^u=B^{g_m(u)}.
\end{equation}

From now on, fix $\bm a_n\in\Gamma_n(\nu_u,\varepsilon)$. We construct an open set inside $\{x\in I_n(\bm a_n): a_{n+1}(x)\cdots a_{n+m}(x)\ge B^n\}$ as follows:
\begin{equation}\label{eq:AGauss}
	A:=\{x\in I_n(\bm a_n): \alpha_i^n\le a_{n+i}(x)\le 2\alpha_i^n \text{ and $a_{n+i}(x)$ is even for $1\le i\le m$} \}.
\end{equation}
Here, we require that $a_{n+i}(x)$ is even to ensure that cylinders of level $n+m$ contained in $A$ are well-separated, in the sense described below.
\begin{lem}\label{l:distance}
	Let $ I_{n+m}(\bm a_n, a_{n+1},\dots, a_{n+m}) $ and $ I_{n+m}(\bm a_n, a_{n+1}',\dots, a_{n+m}') $ be two distinct cylinders contained in $A$. Let $1\le k\le m$ be the smallest integer for which $a_{n+k}\ne a_{n+k}'$. Then, the distance between these two cylinders is at least
	\[\frac{1}{32q(\bm a_n,a_{n+1},\dots,a_{n+k})^2}.\]
\end{lem}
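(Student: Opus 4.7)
The plan is to reduce the desired separation estimate to the distance between the two cylinders of level $n+k$ obtained by truncating the last $m-k$ coordinates, and then to bound that distance below by the length of a suitable ``in-between'' sub-cylinder. Concretely, both $I_{n+m}(\bm a_n, a_{n+1}, \ldots, a_{n+m})$ and $I_{n+m}(\bm a_n, a_{n+1}', \ldots, a_{n+m}')$ are contained respectively in the level-$(n+k)$ cylinders $I_{n+k}(\bm a_n, a_{n+1}, \ldots, a_{n+k})$ and $I_{n+k}(\bm a_n, a_{n+1}, \ldots, a_{n+k-1}, a_{n+k}')$, which share the common parent $I_{n+k-1}(\bm a_n, a_{n+1}, \ldots, a_{n+k-1})$ but have distinct last coordinates; hence it suffices to bound the gap between these two level-$(n+k)$ cylinders from below.

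To do so, I would invoke Proposition~\ref{p:distributioncylinder}: the sub-cylinders $I_{n+k}(\bm a_n, a_{n+1}, \ldots, a_{n+k-1}, j)$, $j \in \N$, are arranged in monotonic spatial order within the parent (left-to-right or right-to-left according to the parity of $n+k-1$), so cylinders with consecutive values of $j$ are geometrically adjacent. By the construction of $A$ in~\eqref{eq:AGauss}, both $a_{n+k}$ and $a_{n+k}'$ are even, so $|a_{n+k} - a_{n+k}'| \ge 2$; setting $j^* = \min(a_{n+k}, a_{n+k}') + 1$, the cylinder $I_{n+k}(\bm a_n, a_{n+1}, \ldots, a_{n+k-1}, j^*)$ lies strictly between the two cylinders of interest, which yields
\[
\dist\bigl(I_{n+k}(\bm a_n, \ldots, a_{n+k}),\, I_{n+k}(\bm a_n, \ldots, a_{n+k-1}, a_{n+k}')\bigr) \ge |I_{n+k}(\bm a_n, \ldots, a_{n+k-1}, j^*)|.
\]

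For the length estimate, a short computation with the recurrence~\eqref{eq:recursive} shows $q(\bm a_n, \ldots, a_{n+k-1}, j^*) \le 2\, q(\bm a_n, \ldots, a_{n+k})$: in the case $j^* = a_{n+k}+1$, the denominator grows by exactly $q(\bm a_n, \ldots, a_{n+k-1}) \le q(\bm a_n, \ldots, a_{n+k})$, while in the case $j^* \le a_{n+k}-1$ it only shrinks. Lemma~\ref{l:lengthcylinderGauss} then gives $|I_{n+k}(\bm a_n, \ldots, j^*)| \ge 1/(2\, q(\bm a_n, \ldots, j^*)^2) \ge 1/(8\, q(\bm a_n, \ldots, a_{n+k})^2)$, which comfortably implies the claimed lower bound $1/(32\, q(\bm a_n, \ldots, a_{n+k})^2)$. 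There is no substantive obstacle here; the argument amounts to careful geometric bookkeeping. The one conceptual point worth emphasising is the essential role of the evenness constraint built into~\eqref{eq:AGauss}: without it one could have $|a_{n+k} - a_{n+k}'| = 1$, in which case the two sub-cylinders would be geometrically adjacent and no universal lower bound on their separation could hold.
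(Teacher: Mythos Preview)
Your proof is correct and follows essentially the same route as the paper's: both arguments exploit the evenness constraint in~\eqref{eq:AGauss} to guarantee an intermediate index $a_{n+k}''$ (your $j^*$) whose level-$(n+k)$ cylinder separates the two in question, then bound that cylinder's length below via Lemma~\ref{l:lengthcylinderGauss} and the recursion~\eqref{eq:recursive}. Your explicit choice $j^*=\min(a_{n+k},a_{n+k}')+1$ and the resulting constant $1/8$ are in fact slightly sharper than the paper's, but the underlying idea is identical.
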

\begin{proof}
	By the distribution properties of cylinders (see Proposition \ref{p:distributioncylinder}) and the fact that by definition both $a_{n+k}$ and $a_{n+k}'$ are even integers, there exists a cylinder
	\[I_{n+k}(\bm a_n, a_{n+1},\dots,a_{n+k-1}, a_{n+k}'')\]
	 with either $a_{n+k}<a_{n+k}''<a_{n+k}'$ or $a_{n+k}'<a_{n+k}''<a_{n+k}$,  lies between the two cylinders stated in the lemma. Therefore, by Lemma \ref{l:lengthcylinderGauss}, \eqref{eq:recursive} and \eqref{eq:AGauss}, they are separated by a distance
	 \[\begin{split}
	 	|I_{n+k}(\bm a_n, a_{n+1},\dots,a_{n+k-1}, a_{n+k}'')|&\ge \frac{1}{2q(\bm a,a_{n+1},\dots,a_{n+k-1},a_{n+k}'')^2}\\
	 	&\ge \frac{1}{32q(\bm a,a_{n+1},\dots,a_{n+k-1},a_{n+k})^2},
	 \end{split}\]
	 which provides the claimed lower bound on the distance between the two cylinders.
\end{proof}
 Define a probability measure $\lambda$ supported on $A$ by
\begin{equation}\label{eq:mu}
	\lambda=\frac{1}{\# A}\sum_{I_{n+m}(\bm a_{n+m})\subset A}\frac{\lm|_{I_{n+m}(\bm a_{n+m})}}{\lm(I_{n+m}(\bm a_{n+m}))},
\end{equation}
where $\lm$ denotes the one-dimensional Lebesgue measure. That is, we assign each $(n+m)$th level cylinder of equal weight. For $1\le k\le m$, the number of descendants of each $(\bm a_n,a_{n+1},\dots, a_{n+k})$ are the same. Therefore, we have
\begin{equation}\label{eq:descendants}
	\lambda(I_{n+k}(\bm a_n,a_{n+1},\dots, a_{n+k}))\asymp \frac{1}{\alpha_1^n\cdots\alpha_m^n}\cdot \alpha_{k+1}^n\cdots\alpha_m^n=\frac{1}{\alpha_1^n\cdots\alpha_k^n}.
\end{equation}
\begin{lem}\label{l:holder}
	Let $ \lambda $ be as above. For any $x\in A$ and $r>0$, we have
	\[\lambda(B(x,r))\ll r^{u-K\varepsilon}q(\bm a_n)^{2(u-K\varepsilon)}B^{ng_m(u)},\]
	where $K=\frac{g_m(u)\log B}{2(\int\log|G'|\dif\nu_u)^2}$.
\end{lem}
\begin{proof}
	Without loss of generality, assume that $ x\in I_{n+m}:=I_{n+m}(\bm a_n,a_{n+1},\dots,a_{n+m})\subset A $. Obviously, if $r$ is relatively large, specifically
	\[r\ge \frac{1}{32q(\bm a_n,a_{n+1}\dots,a_{n+m})^2}\ge \frac{|I_n(\bm a_n)|}{32},\]
	then by Lemma \ref{l:lengthcylinderGauss},
	\[\lambda(B(x,r))\le1\ll \frac{r^{u-K\varepsilon}}{|I_n(\bm a_n)|^{u-K\varepsilon}}\asymp r^{u-K\varepsilon} q(\bm a_n)^{2(u-K\varepsilon)}\le r^{u-K\varepsilon} q(\bm a_n)^{2(u-K\varepsilon)}B^{ng_m(u)}.\]
	Hence, it is sufficient to focus on the case $ r<1/(32\, q(\bm a_n, a_{n+1}, \dots, a_{n+m})^2) $. By Lemma \ref{l:distance}, the cylinders in $A$ are well-separated, allowing us to focus on two distinct cases.

	\noindent {\bf Case 1}: Suppose there exists some $1 \le k \le m$ such that
	\[
	\frac{1}{32\, q(\bm a_n, a_{n+1}, \dots, a_{n+k})^{2}} \le r < \frac{1}{32\, q(\bm a_n, a_{n+1}, \dots, a_{n+k-1})^{2}}.
	\]
	By Lemma \ref{l:distance}, the ball $ B(x,r) $ only intersects one cylinder of level $n+k-1$, namely $ I_{n+k-1}(\bm a_n,a_{n+1}\dots,a_{n+k-1}) $, contained in $A$, but may intersect multiple cylinders of level $n+k$. Define
	\[\begin{split}
		\Delta(x;k)=\{a_{n+k}\in[\alpha_k^n,2\alpha_k^n]:& \text{ $a_{n+k}$ is even and }\\
		&I_{n+k}(\bm a_n,a_{n+1}\dots,a_{n+k-1},a_{n+k})\cap B(x,r)\ne\emptyset\}.
	\end{split}\]
	To estimate $\mu(B(x,r))$, it is essential to bound $\# \Delta(x; k)$ from above. Two natural upper bounds arise:
	\begin{enumerate}[(a)]
		\item From the definition,
		\begin{equation}\label{eq:upper1}
			\# \Delta(x; k) \ll \alpha_k^n.
		\end{equation}

		\item  From the well-separation property (see Lemma \ref{l:distance}), cylinders of level $n+k$ in $A$ are spaced by at least $1/(32\, q(\bm a_n, a_{n+1}, \dots, a_{n+k})^2)$. Thus,
		\begin{equation}\label{eq:upper2}
			\# \Delta(x; k) \ll r \, q(\bm a_n, a_{n+1}, \dots, a_{n+k})^2 \stackrel{\eqref{eq:consequence of recursive}}{\asymp} r \, q(\bm a_n)^2 a_{n+1}^2 \cdots a_{n+k}^2.
		\end{equation}
	\end{enumerate}
	Combining \eqref{eq:upper1} and \eqref{eq:upper2} and using the inequality $\min\{a,b\}\le a^{1-u}b^u$, we get
	\begin{align}
		\#\Delta(x;k)&\ll\min\{\alpha_k^n,rq(\bm a_n)^2a_{n+1}^2\cdots a_{n+k}^2\}\notag\\
		&\ll \alpha_k^{n(1-u)} \cdot \left( r q(\bm a_n)^2 a_{n+1}^2 \cdots a_{n+k}^2 \right)^u\notag\\
		&\ll r^uq(\bm a_n)^{2u}\alpha_1^{2nu}\cdots \alpha_{k-1}^{2nu}\alpha_k^{n(1+u)},\label{eq:Delta(x,k)}
	\end{align}
	where we use $a_{n+i}\asymp \alpha_i^n$ for $1\le i\le m$ in the last inequality.
	Hence,
	\[\begin{split}
		\lambda(B(x,r))&\stackrel{\eqref{eq:descendants}}{\ll} \#\Delta(x;k)\cdot\frac{1}{\alpha_1^n\cdots\alpha_k^n}\\
		&\stackrel{\eqref{eq:Delta(x,k)}}{\ll}r^uq(\bm a_n)^{2u}\alpha_1^{n(2u-1)}\cdots \alpha_{k-1}^{n(2u-1)}\alpha_k^{nu}\\
		&\stackrel{\eqref{eq:alphai=}}{=} r^uq(\bm a_n)^{2u}B^{ng_m(u)}\ll r^{u-K\varepsilon}q(\bm a_n)^{2(u-K\varepsilon)}B^{ng_m(u)},
	\end{split}\]
	where the last inequality follows from $r\ll q(\bm a_n)$.

	\noindent Case 2: If \[
	r \le \frac{1}{32\, q(\bm a_n, a_{n+1}, \dots, a_{n+m})^2},
	\]
	then $B(x,r)$ intersects only one cylinder of level $n+m$, namely $I_{n+m}$, contained in $A$. It follows that
	\[\begin{split}
		\frac{\lm|_{I_{n+m}}(B(x,r))}{\lm(I_{n+m})}&\le\frac{2r}{\lm(I_{n+m})}\stackrel{\text{Lemma \ref{l:lengthcylinderGauss}}}{\asymp} rq(\bm a_n,a_{n+1},\dots,a_{n+m})^2\\
		&\ll r^uq(\bm a_n,a_{n+1},\dots,a_{n+m})^{2u}\stackrel{\eqref{eq:consequence of recursive}}{\asymp} r^uq(\bm a_n)^{2u}\alpha_1^{2nu}\cdots\alpha_m^{2nu}\\
		&\le r^uq(\bm a_n)^{2u}\alpha_1^{2nu}\cdots\alpha_{m-1}^{2nu}\alpha_m^{n(u+1)},
	\end{split}\]
	where we use $1/2<u<1$ in the last inequality.
	This together with the definition of $\lambda$ gives
	\[\begin{split}
		\lambda(B(x,r))&=\frac{1}{\#A}\cdot 	\frac{\lm|_{I_{n+m}}(B(x,r))}{\lm(I_{n+m})}	\ll r^uq(\bm a_n)^{2u}\alpha_1^{n(2u-1)}\cdots \alpha_{m-1}^{n(2u-1)}\alpha_m^{nu}\\
		&\stackrel{\eqref{eq:alphai=}}{=}r^uq(\bm a_n)^{2u}B^{ng_m(u)}\ll r^{u-K\varepsilon}q(\bm a_n)^{2(u-K\varepsilon)}B^{ng_m(u)}.\qedhere
	\end{split}\]
\end{proof}
We are now in a position to prove Theorem~\ref{t:consecutive} (see also \eqref{eq:lipconsecutive}), using the measure $\lambda$ constructed earlier and the mass distribution principle.
\begin{proof}[Proof of Theorem \ref{t:consecutive}]
		Let $\bm a_n \in \Gamma_n(\nu_u, \varepsilon)$. By Lemma~\ref{l:holder} and the mass distribution principle, we obtain the following lower bound for the $(u - K\varepsilon)$-Hausdorff content of the set $A$:
	\begin{equation}\label{eq:hcA}
		\hc^{u-K\varepsilon}(A)\gg q(\bm a_n)^{-2(u-K\varepsilon)}B^{-ng_m(u)}\asymp e^{-(u-K\varepsilon)S_n\log|G'|(x)-ng_m(u)\log B},
	\end{equation}
	where $x \in I_n(\bm a_n)$ is any point in the cylinder. To proceed, we analyze the exponent on the right-hand-side of~\eqref{eq:hcA}. By the definition of $\Gamma_n(\nu_u, \varepsilon)$, we know that for any $x \in I_n(\bm a_n)$,
	\[\bigg|\frac{1}{n}S_n \log|G'|(x)-\int \log|G'|\dif\nu_u\bigg|<\varepsilon~\Longrightarrow~ S_n \log|G'|(x)\ge n\bigg(\int \log|G'|\dif\nu_u-\varepsilon\bigg).\]
	Substituting this into the exponent, we obtain:
	\begin{align}
		&(u-K\varepsilon)S_n\log|G'|(x)+ng_m(u)\log B\notag\\
		=&S_n\log|G'|(x)\bigg(u-K\varepsilon+\frac{ng_m(u)\log B}{S_n\log|G'|(x)}\bigg)\notag\\
		\le& S_n\log|G'|(x)\bigg(u-K\varepsilon+\frac{ng_m(u)\log B}{n(\int \log|G'|\dif\nu_u-\varepsilon)}\bigg)\notag.
	\end{align}
	Recall that $0<\varepsilon<\int \log|G'|\dif\nu_u/2$ (see the line before \eqref{eq:ve<}) and $K=\frac{g_m(u)\log B}{2(\int\log|G'|\dif\nu_u)^2}$. It follows that
	\[\begin{split}
		&\frac{ng_m(u)\log B}{n(\int \log|G'|\dif\nu_u-\varepsilon)}-K\varepsilon\\
		=& \frac{g_m(u)\log B}{\int \log|G'|\dif\nu_u}\cdot \frac{\int \log|G'|\dif\nu_u}{\int \log|G'|\dif\nu_u-\varepsilon}-\frac{g_m(u)\log B}{2(\int\log|G'|\dif\nu_u)^2}\cdot\varepsilon\\
		=&\frac{g_m(u)\log B}{\int \log|G'|\dif\nu_u}+\frac{g_m(u)\log B}{\int \log|G'|\dif\nu_u}\cdot \frac{\varepsilon}{\int \log|G'|\dif\nu_u-\varepsilon}-\frac{g_m(u)\log B}{2(\int\log|G'|\dif\nu_u)^2}\cdot\varepsilon\\
		\le& \frac{g_m(u)\log B}{\int \log|G'|\dif\nu_u}.
	\end{split}\]
	Therefore,
	\begin{equation}\label{eq:exponent}
	(u-K\varepsilon)S_n\log|G'|(x)+ng_m(u)\log B\le S_n\log|G'|(x)\bigg(u+\frac{g_m(u)\log B}{\int \log|G'|\dif\nu_u}\bigg).
	\end{equation}
		By the variational principle (see Theorem \ref{t:GibbsGauss} (2)), we have
	\[\begin{split}
		&0=h_{\nu_u}-\bigg(u\int \log|G'|\dif\nu_u+g_m(u)\log B\bigg)\\
		\Longrightarrow\quad& u+\frac{g_m(u)\log B}{\int \log|G'|\dif\nu_u}=\frac{h_{\nu_u}}{\int \log|G'|\dif\nu_u}=\hdim\nu_u.
	\end{split}\]
	Substituting this identity into~\eqref{eq:exponent} and then into~\eqref{eq:hcA}, we conclude that
	\[\hc^{u-K\varepsilon}(A)\gg e^{-S_n\log|G'|(x)\cdot \hdim\nu_u}\stackrel{\text{Lemma \ref{l:lengthcylinderGauss}}}{\asymp} |I_n(\bm a_n)|^{\hdim\nu_u}.\]
		Finally, observe that by construction,
	\[
	A \subset \left\{ x \in I_n(\bm a_n) : a_{n+1}(x) \cdots a_{n+m}(x) \ge B^n \right\}\qaq \bm a_n \in \Gamma_n(\nu_u, \varepsilon),
	\]
	so we have the lower bound
	\[
	\hc^{u - K\varepsilon}\left( \left\{ x \in I_n(\bm a_n) : a_{n+1}(x) \cdots a_{n+m}(x) \ge B^n \right\} \right) \ge |I_n(\bm a_n)|^{\hdim \nu_u}.
	\]
	Since this holds for all $\bm a_n \in \Gamma_n(\nu_u, \varepsilon)$, by \eqref{eq:ve<} and Theorem~\ref{t:MTP}, it follows that
	\[
	F_m(B) \in \mathscr{G}^{u - K\varepsilon}([0,1]).
	\]
	Since $\varepsilon > 0$ is arbitrary, we conclude that
	\[
	F_m(B) \in \mathscr{G}^{u}([0,1]),
	\]
	which completes the proof.
\end{proof}
\subsection*{Acknowledgments}
 The authors would like to thank the referee very much for care-
ful reading of the manuscript, pointing out some mistakes and giving many beneficial
suggestions. Y. He was supported by the NSFC (No. 12401108) and partially by a grant from the Guangdong Provincial Department of Education (2025KCXTD013).

\bibliographystyle{abbrv}

\bibliography{bibliography}

\begin{thebibliography}{10}

\bibitem{AllenBakerMTPselect}
D.~Allen and S.~Baker.
\newblock A general mass transference principle.
\newblock {\em Selecta Math. (N.S.)}, 25(3):Paper No. 39, 38, 2019.

\bibitem{AllenBaranySTPMathematika}
D.~Allen and B.~B{\'a}r{\'a}ny.
\newblock On the {H}ausdorff measure of shrinking target sets on self-conformal
  sets.
\newblock {\em Mathematika}, 67(4):807--839, 2021.

\bibitem{AllenBeresnevichhyperplaneCompos}
D.~Allen and V.~Beresnevich.
\newblock A mass transference principle for systems of linear forms and its
  applications.
\newblock {\em Compos. Math.}, 154(5):1014--1047, 2018.

\bibitem{BakerKoivusaloSTPadv}
S.~Baker and H.~Koivusalo.
\newblock Quantitative recurrence and the shrinking target problem for
  overlapping iterated function systems.
\newblock {\em Advances in Mathematics}, 442:109538, 2024.

\bibitem{BaranyRamsSTPPLMS}
B.~B\'ar\'any and M.~Rams.
\newblock Shrinking targets on {B}edford-{M}c{M}ullen carpets.
\newblock {\em Proc. Lond. Math. Soc. (3)}, 117(5):951--995, 2018.

\bibitem{BarralSeuretMTP}
J.~Barral and S.~Seuret.
\newblock Heterogeneous ubiquitous systems in {$\Bbb R^d$} and {H}ausdorff
  dimension.
\newblock {\em Bull. Braz. Math. Soc. (N.S.)}, 38(3):467--515, 2007.

\bibitem{BeresnevichVelaniMTPann}
V.~Beresnevich and S.~Velani.
\newblock A mass transference principle and the {D}uffin-{S}chaeffer conjecture
  for {H}ausdorff measures.
\newblock {\em Ann. of Math. (2)}, 164(3):971--992, 2006.

\bibitem{BesicovitchJLMSDiophantine}
A.~S. Besicovitch.
\newblock Sets of {F}ractional {D}imensions ({IV}): {O}n {R}ational
  {A}pproximation to {R}eal {N}umbers.
\newblock {\em J. London Math. Soc.}, 9(2):126--131, 1934.

\bibitem{BishopPeresbook}
C.~J. Bishop and Y.~Peres.
\newblock {\em Fractals in probability and analysis}, volume 162 of {\em
  Cambridge Studies in Advanced Mathematics}.
\newblock Cambridge University Press, Cambridge, 2017.

\bibitem{BugeaudWangSTPJFG}
Y.~Bugeaud and B.-W. Wang.
\newblock Distribution of full cylinders and the {D}iophantine properties of
  the orbits in $\beta$-expansions.
\newblock {\em Journal of Fractal Geometry}, 1(2):221--241, 2014.

\bibitem{CoonsHussainWangSTPbeta}
M.~Coons, M.~Hussain, and B.-W. Wang.
\newblock A dichotomy law for the diophantine properties in {$\beta$}-dynamical
  systems.
\newblock {\em Mathematika}, 62(3):884--897, 2016.

\bibitem{DaviaudMTPJMAA}
E.~Daviaud.
\newblock A dimensional mass transference principle from ball to rectangles for
  projections of {G}ibbs measures and applications.
\newblock {\em J. Math. Anal. Appl.}, 538(1):Paper No. 128386, 28, 2024.

\bibitem{DaviaudMTPadv}
E.~Daviaud.
\newblock A dimensional mass transference principle for {B}orel probability
  measures and applications.
\newblock {\em Adv. Math.}, 474:Paper No. 110304, 47, 2025.

\bibitem{DaviaudSTPETDS}
E.~Daviaud.
\newblock Dynamical {D}iophantine approximation and shrinking targets for
  {$C^1$} weakly conformal {IFS}s with overlaps.
\newblock {\em Ergodic Theory Dynam. Systems}, 45(6):1777--1826, 2025.

\bibitem{ErikssonMTPadv}
S.~Eriksson-Bique.
\newblock A new {H}ausdorff content bound for limsup sets.
\newblock {\em Adv. Math.}, 445:Paper No. 109638, 52, 2024.

\bibitem{FalconerLIPintroduce}
K.~Falconer.
\newblock Sets with large intersection properties.
\newblock {\em J. London Math. Soc. (2)}, 49:267--280, 1994.

\bibitem{Falconer_book}
K.~J. Falconer.
\newblock {\em Fractal geometry}.
\newblock John Wiley \& Sons, Ltd., Chichester, 1990.
\newblock Mathematical foundations and applications.

\bibitem{FanLiaoWangWuETDS}
A.-H. Fan, L.-M. Liao, B.-W. Wang, and J.~Wu.
\newblock On {K}hintchine exponents and {L}yapunov exponents of continued
  fractions.
\newblock {\em Ergodic Theory Dynam. Systems}, 29(1):73--109, 2009.

\bibitem{FanWangbetaNon}
A.-H. Fan and B.-W. Wang.
\newblock On the lengths of basic intervals in beta expansions.
\newblock {\em Nonlinearity}, 25(5):1329--1343, 2012.

\bibitem{HanusMauldinUrbanskimultifractalAMH}
P.~Hanus, R.~D. Mauldin, and M.~Urba\'nski.
\newblock Thermodynamic formalism and multifractal analysis of conformal
  infinite iterated function systems.
\newblock {\em Acta Math. Hungar.}, 96(1-2):27--98, 2002.

\bibitem{HeSTPETDS}
Y.~He.
\newblock Shrinking parallelepiped targets for {$\beta$}-dynamical systems.
\newblock {\em Ergodic Theory Dynam. Systems}, 45(6):1827--1842, 2025.

\bibitem{HeMTPadv}
Y.~He.
\newblock A unified approach to mass transference principle and large
  intersection property.
\newblock {\em Adv. Math.}, 471:Paper No. 110267, 51, 2025.

\bibitem{HillVelaniSTPintroduce}
R.~Hill and S.~L. Velani.
\newblock The ergodic theory of shrinking targets.
\newblock {\em Inventiones mathematicae}, 119(1):175--198, 1995.

\bibitem{HuangWuXuconsecutiveISM}
L.~Huang, J.~Wu, and J.~Xu.
\newblock Metric properties of the product of consecutive partial quotients in
  continued fractions.
\newblock {\em Israel J. Math.}, 238(2):901--943, 2020.

\bibitem{HussainLiSimmonsWangBCETDS}
M.~Hussain, B.~Li, D.~Simmons, and B.~Wang.
\newblock Dynamical {B}orel-{C}antelli lemma for recurrence theory.
\newblock {\em Ergodic Theory Dynam. Systems}, 42(6):1994--2008, 2022.

\bibitem{IosifescuKraaikampbook}
M.~Iosifescu and C.~Kraaikamp.
\newblock {\em Metrical theory of continued fractions}, volume 547 of {\em
  Mathematics and its Applications}.
\newblock Kluwer Academic Publishers, Dordrecht, 2002.

\bibitem{Jarnikdimension}
V.~Jarn\'ik.
\newblock {D}iophantische {A}pproximationen und {H}ausdorffsches {M}ass.
\newblock {\em Mat. Sb.}, 36:371--382, 1929.

\bibitem{JordanSahlsten16}
T.~Jordan and T.~Sahlsten.
\newblock Fourier transforms of {G}ibbs measures for the {G}auss map.
\newblock {\em Math. Ann.}, 364(3-4):983--1023, 2016.

\bibitem{Khintchinebook}
A.~Y. Khinchin.
\newblock {\em Continued fractions}.
\newblock University of Chicago Press, Chicago, Ill.-London, 1964.

\bibitem{KleinbockZhengBCNonlinearity}
D.~Kleinbock and J.~Zheng.
\newblock Dynamical {B}orel-{C}antelli lemma for recurrence under {L}ipschitz
  twists.
\newblock {\em Nonlinearity}, 36(2):1434--1460, 2023.

\bibitem{KoivusaloRamsMTPIMRN}
H.~Koivusalo and M.~Rams.
\newblock Mass transference principle: from balls to arbitrary shapes.
\newblock {\em Int. Math. Res. Not. IMRN}, (8):6315--6330, 2021.

\bibitem{LiLiaoVelaniWangZorinMTPadv}
B.~Li, L.~Liao, S.~Velani, B.~Wang, and E.~Zorin.
\newblock Diophantine approximation and the mass transference principle:
  incorporating the unbounded setup.
\newblock {\em Adv. Math.}, 470:Paper No. 110248, 61, 2025.

\bibitem{LiLiaoVelaniZorinSTPadv}
B.~Li, L.~Liao, S.~Velani, and E.~Zorin.
\newblock The shrinking target problem for matrix transformations of tori:
  Revisiting the standard problem.
\newblock {\em Advances in Mathematics}, 421:108994, 2023.

\bibitem{LiWangWuXuSTPPLMS}
B.~Li, B.-W. Wang, J.~Wu, and J.~Xu.
\newblock The shrinking target problem in the dynamical system of continued
  fractions.
\newblock {\em Proceedings of the London Mathematical Society},
  108(1):159--186, 2014.

\bibitem{LiaoSeuretETDS}
L.~Liao and S.~Seuret.
\newblock Diophantine approximation by orbits of expanding {M}arkov maps.
\newblock {\em Ergodic Theory Dynam. Systems}, 33(2):585--608, 2013.

\bibitem{Mattilageometry1999}
P.~Mattila.
\newblock {\em Geometry of sets and measures in Euclidean spaces: fractals and
  rectifiability}.
\newblock Number~44. Cambridge university press, 1999.

\bibitem{MauldinUrbanskiPLMSIFS}
R.~D. Mauldin and M.~Urba\'nski.
\newblock Dimensions and measures in infinite iterated function systems.
\newblock {\em Proc. London Math. Soc. (3)}, 73(1):105--154, 1996.

\bibitem{PerssonMTPLIPreal}
T.~Persson.
\newblock A mass transference principle and sets with large intersections.
\newblock {\em Real Anal. Exchange}, 47(1):191--205, 2022.

\bibitem{Philipp1967}
W.~Philipp.
\newblock Some metrical theorems in number theory.
\newblock {\em Pacific Journal of Mathematics}, 20(1), 1967.

\bibitem{RenyibetaAMH}
A.~R\'enyi.
\newblock Representations for real numbers and their ergodic properties.
\newblock {\em Acta Math. Acad. Sci. Hungar.}, 8:477--493, 1957.

\bibitem{TanWangRecurbetaadv}
B.~Tan and B.-W. Wang.
\newblock Quantitative recurrence properties for beta-dynamical system.
\newblock {\em Advances in Mathematics}, 228(4):2071--2097, 2011.

\bibitem{WaltersbetaGibbs}
P.~Walters.
\newblock Equilibrium states for {$\beta $}-transformations and related
  transformations.
\newblock {\em Math. Z.}, 159(1):65--88, 1978.

\bibitem{WangWusurvey}
B.~Wang and J.~Wu.
\newblock A survey on the dimension theory in dynamical {D}iophantine
  approximation.
\newblock In {\em Recent developments in fractals and related fields
  \uppercase\expandafter{\romannumeral2}}, Trends Math., pages 261--294.
  Birkh\"{a}user/Springer, Cham, 2017.

\bibitem{WangWuMTPrectangleMA}
B.~Wang and J.~Wu.
\newblock Mass transference principle from rectangles to rectangles in
  {D}iophantine approximation.
\newblock {\em Math. Ann.}, 381(1-2):243--317, 2021.

\bibitem{WangWuCFadv}
B.-W. Wang and J.~Wu.
\newblock Hausdorff dimension of certain sets arising in continued fraction
  expansions.
\newblock {\em Adv. Math.}, 218(5):1319--1339, 2008.

\bibitem{WangZhangDMPMathZ}
B.-W. Wang and G.-H. Zhang.
\newblock A dynamical dimension transference principle for dynamical
  diophantine approximation.
\newblock {\em Math. Z.}, 298(1-2):161--191, 2021.

\bibitem{WangSTPJMAA}
W.~Wang.
\newblock Modified shrinking target problem in beta dynamical systems.
\newblock {\em J. Math. Anal. Appl.}, 468(1):423--435, 2018.

\bibitem{ZhongMTPJMAA}
W.~Zhong.
\newblock Mass transference principle: from balls to arbitrary shapes: measure
  theory.
\newblock {\em J. Math. Anal. Appl.}, 495(1):Paper No. 124691, 23, 2021.

\end{thebibliography}

\end{document}